\newtheorem{thm}{Theorem}[section]
\newtheorem{prop}[thm]{Proposition}
\newtheorem{lem}[thm]{Lemma}
\newtheorem{cor}[thm]{Corollary}
\theoremstyle{definition}
\newtheorem{dfn}[thm]{Definition}
\newtheorem{eg}[thm]{Example}
\theoremstyle{remark}
\newtheorem{rem}[thm]{Remark}
\newcommand{\ZZ}{\mathbb{Z}}
\newcommand{\RR}{\mathbb{R}}
\newcommand{\cl}{\mathrm{cl}}
\newcommand{\scl}{\mathrm{scl}}
\renewcommand{\fill}{\mathrm{fill}}
\newcommand{\Ham}{\mathrm{Ham}}
\newcommand{\Hom}{\mathrm{Hom}}
\newcommand{\Symp}{\mathrm{Symp}}
\newcommand{\Aut}{\mathrm{Aut}}
\newcommand{\qad}{\text{$G$-quasi}}
\newcommand{\ad}{G}
\newcommand{\bg}{h}
\newcommand{\hg}{g}
\newcommand{\bG}{N}
\newcommand{\hG}{G}
\newcommand{\fk}{f}
\newcommand{\gk}{g}
\newcommand{\ppi}{q}
\newcommand{\QQQ}{\mathrm{Q}}
\newcommand{\HH}{\mathrm{H}}
\newcommand{\SSS}{\mathscr{S}}
\title{Bavard's duality theorem for mixed commutator length}
\author[M. Kawasaki]{Morimichi Kawasaki}
\address[Morimichi Kawasaki]{Department of Mathematical Sciences, College of Science and Engineering,
Aoyama Gakuin University, 5-10-1 Fuchinobe, Chuo-ku, Sagamihara-shi, Kanagawa, 252-5258, Japan}
\email{kawasaki@math.aoyama.ac.jp}
\author[M. Kimura]{Mitsuaki Kimura}
\address[Mitsuaki Kimura]{Department of Mathematics, Kyoto University, Kitashirakawa Oiwake-cho, Sakyo-ku, Kyoto 606-8502, Japan}
\email{mkimura@math.kyoto-u.ac.jp}
\author[T. Matsushita]{Takahiro Matsushita}
\address[Takahiro Matsushita]{Department of Mathematical Sciences, University of the Ryukyus, Nishihara-cho, Okinawa 903-0213, Japan}
\email{mtst@sci.u-ryukyu.ac.jp}
\author[M. Mimura]{Masato Mimura}
\address[Masato Mimura]{Mathematical Institute, Tohoku University, 6-3, Aramaki Aza-Aoba, Aoba-ku, Sendai 9808578, Japan}
\email{m.masato.mimura.m@tohoku.ac.jp}
\subjclass[2010]{Primary 20F12, 18N50, 20J06, 05E45; Secondary 20F36, 55U10, 46B10, 53D22}
\keywords{quasimorphism, commutator length, stable commutator length, Bavard's duality, pseudo-character}
\begin{document}

\baselineskip.525cm

\maketitle

\begin{abstract}
Let $\bG$ be a normal subgroup of a group $\hG$. A quasimorphism $f$ on $\bG$ is $\hG$-invariant if $f(gxg^{-1}) = f(x)$ for every $g \in \hG$ and every $x \in \bG$. The goal 
 in this paper is to establish
Bavard's duality theorem of $\hG$-invariant quasimorphisms, which was previously proved by Kawasaki and Kimura in the case $\bG = [\hG,\bG]$.

Our duality theorem provides
a connection between $G$-invariant quasimorphisms and $(\hG,\bG)$-commutator lengths. Here for $x \in [\hG,\bG]$, the $(\hG,\bG)$-commutator length $\cl_{\hG,\bG}(x)$ of $x$ is the minimum number $n$ such that $x$ is a product of $n$ commutators which are written as $[\hg,\bg]$ with $g \in \hG$ and $h \in \bG$.
In the proof, we give a geometric interpretation of $(\hG,\bG)$-commutator lengths.

As an application of our Bavard duality, we obtain a sufficient condition on a pair $(\hG,\bG)$ under which $\scl_\hG$ and $\scl_{\hG,\bG}$ are bi-Lipschitzly equivalent on $[\hG,\bG]$.

\end{abstract}

\section{Introduction}

\subsection{$G$-invariant quasimorphisms}
A real-valued function $f \colon \hG \to \RR$ on a group $\hG$ is called a \textit{quasimorphism} if
there exists a non-negative number $D$ satisfying
\[|f(\gk_1 \gk_2) - f(\gk_1) - f(\gk_2)| \le D\]
for every pair $\gk_1$ and $\gk_2$ of elements in $\hG$. Such a smallest $D$ is called the \textit{defect} of $f$ and denoted by $D(f)$. More precisely,
\[ D(f) = \sup_{\gk_1, \gk_2 \in \hG}|f(\gk_1 \gk_2) - f(\gk_1) - f(\gk_2)|.  \]
A quasimorphism $f$ on $\hG$ is said to be \textit{homogeneous} if $f(x^n) = n \cdot f(x)$ for every $x \in \hG$ and every integer $n$. Quasimorphisms have been extensively studied in geometric group theory, and are closely related to the second bounded cohomology of groups. For an introduction to this subject, we refer to \cite{Ca} and \cite{Fr}.

Let $\bG$ be a subgroup of $\hG$, and consider a quasimorphism $f$ on $\bG$. It is quite natural to ask when $f$ extends to a quasimorphism on the whole group $\hG$. Such a problem has been actually studied in Shtern \cite{Sh}, Kawasaki \cite{Ka18} and Kawasaki--Kimura \cite{KK}.

In this paper, we treat the case where
$\bG$ is normal.
In this case, there is a condition that any extendable quasimorphism on $\bG$ clearly satisfies.
More precisely,
if a homogeneous quasimorphism $f$ on $\bG$ is extendable, then $f$ is invariant by the
action of $\hG$ on $\bG$ by conjugations, {\it i.e.}, $f(gxg^{-1}) = f(x)$ for every $g \in G$ and $x \in N$.
We call such a quasimorphism on $\bG$ a \emph{$G$-invariant} quasimorphism. 
However, it is known that there exists a $G$-invariant quasimorphism which is not extendable (see \cite{KK}).
Hence, the next natural problem may be to ask when a $G$-invariant homogeneous quasimorphism on $\bG$ is extendable.
To address this problem, Kawasaki and Kimura \cite{KK}
came up with a generalization of Bavard's duality theorem
related to $G$-invariant homogeneous quasimorphisms, which is explained in the next subsection.

Certain types of $\ad$-invariant quasimorphisms have appeared in the literature \cite{BM}.
Let $\bG$ and $\Gamma$ be groups and $\varphi \colon \Gamma \to \Aut(\bG)$ an action of $\Gamma$ on $\bG$. A quasimorphism $f \colon \bG \to \RR$ is $\Gamma$-invariant if $f(\varphi(\gamma) x) =f(x)$ for every $\gamma \in \Gamma$ and for every $x \in \bG$. Set $\hG = \bG \rtimes_\varphi \Gamma$, the semi-direct product.
Then a homogeneous quasimorphism $f$ on $\bG$ is $\Gamma$-invariant if and only if it is $\ad$-invariant.

We also note that $\hG$-invariant quasimorphisms often appear in symplectic geometry. 
Recall that any symplectic manifold $(M,\omega)$ has the following two natural transformation groups. One is the group $\Symp(M,\omega)$ of symplectomorphisms, and the other is the group $\Ham(M,\omega)$ of Hamiltonian diffeomorphisms. It is known that there exist various $\Symp(M,\omega)$-invariant quasimorphisms on $\Ham(M,\omega)$ (see \cite{EP03}, \cite{Py06}, and \cite{Shelukhin} for example).
Moreover, there are applications of $\hG$-invariant quasimorphisms and their extendability in symplectic geometry.
For example, the non-extendability of a certain quasimorphism on $\Ham(M,\omega)$
provides a restriction
on commuting elements of $\Symp_0(M,\omega)$ (see \cite{KK} and \cite[Theorem~1.1]{KKMM}).

\subsection{$(\hG,\bG)$-commutator length and its stabilization}

The \textit{commutator length $\cl_\hG(x)$ of an element $x$ in $[\hG,\hG]$} is the minimum $n$ such that there exist $n$ commutators $c_1, \cdots, c_n$ of $\hG$ with $x = c_1 \cdots c_n$. Then it is known that the limit
\[\scl_\hG(x) = \lim_{n \to \infty} \frac{\cl_\hG(x^n)}{n}\]
exists; we call $\scl_\hG(x)$ the \textit{stable commutator length of $x$}. Bavard's duality theorem gives a deep connection between quasimorphisms and commutator lengths in the following form:

\begin{thm}[Bavard \cite{Bavard}] \label{thm Bavard}
Let $\hG$ be a group and let $x \in [\hG,\hG]$. Then the following equality holds:
\[\scl_\hG(x) = \frac{1}{2} \sup_{f \in \QQQ^{\mathrm{h}}(\hG) - \HH^1(\hG)} \frac{|f(x)|}{D(f)}.\]
\end{thm}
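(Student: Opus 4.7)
The plan is to split the claimed equality into the two inequalities $\scl_\hG(x) \geq \tfrac{1}{2}\sup_f |f(x)|/D(f)$ and $\scl_\hG(x) \leq \tfrac{1}{2}\sup_f |f(x)|/D(f)$, and prove them separately. The first is elementary, while the second is the content of a Hahn--Banach duality argument.

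For the elementary direction, I would first record two standard properties of any $f \in \QQQ^{\mathrm{h}}(\hG)$: (i) $f$ is conjugation-invariant, obtained by applying the defect inequality to $(\hg x \hg^{-1})^n = \hg x^n \hg^{-1}$, dividing by $n$, and letting $n \to \infty$; (ii) $|f([a,b])| \leq D(f)$ for every commutator, a direct consequence of (i) combined with the defect inequality applied to the factorisation $[a,b] = (aba^{-1}) \cdot b^{-1}$. Given an expression $x = c_1 \cdots c_n$ as a product of $n = \cl_\hG(x)$ commutators, iterating the defect inequality then yields $|f(x)| \leq (2n-1) D(f)$. Substituting $x^k$ for $x$, using $f(x^k) = k f(x)$, dividing by $k$, and letting $k \to \infty$ produces the desired bound $|f(x)| \leq 2 \scl_\hG(x)\, D(f)$.

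For the converse, the strategy is to realise $\scl_\hG$ as a quotient seminorm and appeal to Hahn--Banach. Consider the real vector space $\RR[\hG]$ modulo conjugation and abelianisation; on the image of the commutator subgroup there is a filling-type seminorm $\|\cdot\|$ whose stabilisation (under the procedure $y \mapsto y^k/k$) equals $2\scl_\hG(x)$ on the class of $x$, after the correct normalisation. The key identification is that the Banach-space dual $(V, \|\cdot\|)^*$ is canonically isomorphic to $\QQQ^{\mathrm{h}}(\hG) / \HH^1(\hG)$ equipped with the defect norm, via the evaluation pairing. Granted this identification, for any $\varepsilon > 0$ one invokes Hahn--Banach to extend the linear functional on the line $\RR \cdot [x]$ given by $t [x] \mapsto 2 t \scl_\hG(x)$ to a bounded functional on $V$ of operator norm at most $1$, and pulls it back to a homogeneous quasimorphism $f$ with $|f(x)| \geq 2\scl_\hG(x) - \varepsilon$ and $D(f) \leq 1$, yielding the reverse inequality.

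The principal obstacle is the duality between the chain side and the quasimorphism side. Concretely, showing that every bounded linear functional on $V$ is represented by a homogeneous quasimorphism requires a homogenisation step $\tilde{f}(x) := \lim_n n^{-1} f(x^n)$, together with a verification that the defect of $\tilde f$ is controlled by the operator norm and that $\tilde{f}$ lies outside $\HH^1(\hG)$ when the original functional is nonzero. Combined with the factor of $2$ arising from the relation $|f([a,b])| \leq D(f)$ on the easy side, this bookkeeping is precisely what produces the constant $\tfrac{1}{2}$ in the statement.
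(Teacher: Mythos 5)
Your sketch is a correct outline of the classical proof of Bavard's duality. Note that the paper does not itself prove Theorem~\ref{thm Bavard}---it is cited from Bavard and later recovered as the special case $\bG = \hG$ of Theorem~\ref{thm 7.1}---but the paper's proof of that generalization follows exactly the two-step strategy you describe: the elementary bound $|f([a,b])| \le D(f)$ together with stabilization in one direction, and Hahn--Banach duality against a filling-type seminorm (the paper's $\fill_{\hG,\bG}$, where $\fill = 4\,\scl$) combined with the homogenization estimate $D(\overline{f}) \le 2D(f)$ in the other, which is where the factor $\tfrac{1}{2}$ closes up.
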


Here $\QQQ^{\mathrm{h}}(\hG)$ is the set of homogeneous quasimorphisms on $\hG$, and $\HH^1(\hG)$ is the set of homomorphisms from $\hG$ to $\RR$. We
regard the right of the equality in Theorem \ref{thm Bavard}
as $0$ if every homogeneous quasimorphism on $\hG$ is a homomorphism.
Some variants of Theorem \ref{thm Bavard} were discussed in \cite{CZ} and \cite{Ka17}.

Theorem \ref{thm Bavard} is also related to bounded cohomology.
It is known that the kernel of the comparison map $\HH_b^2(\hG) \to \HH^2(\hG)$, from the second bounded cohomology to the second ordinary cohomology, is isomorphic to $\QQQ^{\mathrm{h}}(\hG)/\HH^1(\hG)$ (see \cite{Ca}). Thus, Theorem \ref{thm Bavard} gives a necessary and sufficient condition for the comparison map to be injective, \textit{i.e.}, the comparison map is injective if and only if the stable commutator length is identically zero.

Theorem \ref{thm Bavard} has several interesting applications. For example, Endo and Kotschik \cite{EK} used Theorem \ref{thm Bavard} to show the existence of homogeneous quasimorphisms which are not homomorphisms on the mapping class groups of surfaces. Other main applications of Theorem \ref{thm Bavard} are computations of stable commutator lengths: 
Theorem \ref{thm Bavard} allows us to compute the stable commutator lengths when $\hG$ 
admits only few homogeneous quasimorphisms which are not homomorphisms (see \cite{GS}, \cite{Maruyama}, and \cite{Zhuang}). Yet other
applications of Theorem \ref{thm Bavard} are found in \cite{BIP}, \cite{CMS}, and \cite{Mimura} for example.

The goal in this paper is to establish Bavard's duality theorem in the setting of $G$-invariant homogeneous quasimorphisms.
The counterpart for the commutator length in our setting is the $(\hG,\bG)$-commutator length, defined as follows:
an element $x$ in $\hG$ is a \textit{$($single$)$ $(\hG,\bG)$-commutator} if there exist $g \in \hG$ and $h \in \bG$ such that $x = [\hg,\bg]$. As is usual, we denote by $[\hG,\bG]$ the subgroup of $\hG$ generated by the $(\hG,\bG)$-commutators. In some literature, $[\hG,\bG]$ is called the \emph{mixed commutator subgroup.}
The \textit{$(\hG,\bG)$-commutator length $\cl_{\hG,\bG}(x)$ of an element $x$ in $[\hG,\bG]$} is the minimal number $n$ such that there exist $(\hG,\bG)$-commutators $c_1, \cdots, c_n$ with $x = c_1 \cdots c_n$. Then it is clear that there exists a limit
\[\scl_{\hG,\bG}(x) = \lim_{n \to \infty} \frac{\cl_{\hG,\bG}(x^n)}{n},\]
and call $\scl_{\hG,\bG}(x)$ the \textit{stable $(\hG,\bG)$-commutator length of $x$}.

To state our main result, we prepare some definitions:
let $\QQQ^{\mathrm{h}}(\bG)^{\hG}$ be the set of homogeneous $\qad$-invariant quasimorphisms on $\bG$. Let $\HH^1(\bG)^{\hG}$ be the set of homomorphisms from $\bG$ to $\RR$, which are $G$-invariant.
Then, our main result is formulated as follows:

\begin{thm}[Bavard's duality theorem for $\hG$-invariant quasimorhisms, Theorem \ref{thm 7.1}] \label{main thm}
Let $\hG$ be a group and $\bG$ a normal subgroup of $\hG$, and let $x \in [\hG,\bG]$. Then the following equality holds:
\[\scl_{\hG,\bG}(x) = \frac{1}{2}  \sup_{f \in \QQQ^{\mathrm{h}}(\bG)^{\hG} - \HH^1(\bG)^{\hG}} \frac{|f(x)|}{D(f)}.\]
\end{thm}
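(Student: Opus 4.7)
I would prove the two inequalities of the duality separately, handling the easy direction by a direct estimate and the hard direction via a Hahn--Banach argument based on a geometric reformulation of $\cl_{\hG,\bG}$.

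For the easy direction $\scl_{\hG,\bG}(x) \ge \frac{1}{2}|f(x)|/D(f)$, the key observation is that every homogeneous $\qad$-invariant quasimorphism on $\bG$ is in fact honestly $\hG$-invariant: applying the $\qad$-invariance inequality to $h^n$ and using homogeneity gives $n|f(\hg h\hg^{-1}) - f(h)| = |f(\hg h^n \hg^{-1}) - f(h^n)| \le D'$, so $f(\hg h\hg^{-1}) = f(h)$ upon sending $n \to \infty$. Then any single $(\hG,\bG)$-commutator satisfies
\[
|f([\hg,h])| = |f((\hg h\hg^{-1}) h^{-1}) - f(\hg h\hg^{-1}) - f(h^{-1})| \le D(f),
\]
since $f(\hg h\hg^{-1}) = f(h) = -f(h^{-1})$. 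Applying the defect $(m-1)$ times along a minimal factorization of $x^k$ as $m = \cl_{\hG,\bG}(x^k)$ mixed commutators yields $k|f(x)| = |f(x^k)| \le (2\cl_{\hG,\bG}(x^k) - 1) D(f)$; dividing by $k$ and sending $k \to \infty$ gives the bound.

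For the hard direction, I would follow the Bavard--Calegari blueprint specialized to the mixed setting. The crucial first step is the geometric interpretation of $\cl_{\hG,\bG}$ advertised in the introduction: realize it via oriented surfaces with boundary mapping into a pair built from classifying spaces of $\hG$ and $\bG$, with boundary loops constrained to $B\bG$ and the interior allowed to map to $B\hG$. This should express $2\scl_{\hG,\bG}(x)$ as the infimum of $-\chi(S)/n$ over admissible $(S,n)$, and identify it with an $L^1$-type seminorm $\|\cdot\|_B$ on a relative bar chain complex encoding the $\hG$-action on $\bG$ by conjugation. The Hahn--Banach theorem then produces a bounded linear functional $\phi$ on the relative chain space attaining $\|[x]\|_B$ on $x$ and vanishing on boundaries of admissible surfaces; the $\hG$-equivariance built into the relative complex forces $\phi$ to correspond to an $\hG$-invariant bounded $2$-cocycle on $\bG$, whose primitive (after homogenization) yields the desired $\qad$-invariant homogeneous quasimorphism with defect equal to the operator norm of $\phi$.

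The main obstacle is the geometric reformulation step. The relative chain complex must be designed so that its seminorm matches $\scl_{\hG,\bG}$ exactly --- i.e., so that boundaries correspond to factorizations by mixed commutators with $\hg \in \hG$ and $h \in \bG$ rather than by arbitrary commutators in $\hG$ (which would give $\scl_{\hG}$) --- while simultaneously its continuous dual recovers precisely the $\qad$-invariant quasimorphisms modulo $\qad$-invariant homomorphisms. Once this algebraic-topological setup is correctly in place, the functional-analytic portion of the argument proceeds along the classical lines of Bavard.
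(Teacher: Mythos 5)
Your easy direction is essentially the paper's argument: Lemma~\ref{lem 3.1} (homogeneity forces exact $\hG$-invariance), then the bound $|f([g,h])|\le D(f)$ for a single mixed commutator (the paper's Lemma~\ref{lem 3.6}), and finally the telescoping defect estimate applied to $x^k$. That part is fine.

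The hard direction matches the paper only at the level of strategy (geometric reformulation of $\scl_{\hG,\bG}$, then Hahn--Banach), and there is a genuine gap in the geometric step that you partly flag but do not resolve. Your proposed picture --- surfaces $f\colon S\to B\hG$ with boundary loops constrained to lie in $B\bG$ --- does \emph{not} compute $\scl_{\hG,\bG}$: for $x\in[\hG,\bG]\subset \bG$ the boundary loop of any filling surface already lies in $B\bG$ automatically, so this constraint is vacuous and the construction collapses to ordinary $\scl_\hG$. The constraint that distinguishes mixed commutators has to live in the \emph{interior}, not on the boundary. The paper's device is the $(\hG,\bG)$-simplicial surface: a triangulated surface whose edges are $\hG$-labelled so that for \emph{each individual triangle} $\sigma$, one of the two legs $\partial_0\sigma$, $\partial_2\sigma$ carries a $\bG$-label. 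Dually, the relevant chain complex is not a relative bar complex for the pair $(\hG,\bG)$ but the \emph{sub}module $C_2'(\hG)\subset C_2(\hG)$ spanned by pairs $(g_1,g_2)$ with $g_1\in\bG$ or $g_2\in\bG$, and its continuous dual is the space of $\bG$-quasimorphisms on $\hG$ (not $\hG$-invariant cocycles on $\bG$). Only after Hahn--Banach produces such an $\bG$-quasimorphism does one \emph{restrict} it to $\bG$ (Lemma~\ref{lem 3.3}) and homogenize, keeping track of the defect inflation $D(\overline{f|_\bG})\le 2D''(f)$. These specific constructions are what your "main obstacle" paragraph is missing, and they are precisely the technical core of the theorem.
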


Note that Theorem \ref{thm Bavard} is the case $G = \bG$ of Theorem \ref{main thm}. Therefore, Theorem \ref{main thm} is a generalization of Bavard's duality theorem.
Kawasaki and Kimura \cite{KK} proved Theorem \ref{main thm} under the extra assumption that $[\hG,\bG] = \bG$.

Our proof of Theorem \ref{main thm} is a generalization of the original proof of Bavard \cite{Bavard}. However, in the proof of Theorem \ref{main thm}, we introduce several notions which did not appear in the original proof. One of the important by-products is a geometric interpretation of $(\hG,\bG)$-commutator lengths (Theorem \ref{main thm 2}).

\subsection{Geometric interpretation of $(\hG,\bG)$-commutator lengths}

Let $x$ be an element of the commutator subgroup $[\hG,\hG]$ of $\hG$. Then $x$ is identified with a homotopy class of loops in the classifying space $B{\hG}$ of $\hG$. Since $x$ vanishes in $\HH_1(G ;\ZZ)$, there exist an oriented compact surface $S$ with connected boundary and a continuous map $f \colon S \to BG$ such that the homotopy class of the loop $f|_{\partial S} \colon \partial S \to BG$ is $x$. Then the commutator length of $x$ coincides with the minimum genus of such a surface $S$ (see \cite{Ca}).

In the proof of Theorem \ref{main thm}, we need a similar geometric 
interpretation of $(\hG,\bG)$-commutator lengths. To obtain this, we introduce $(\hG,\bG)$-simplicial surfaces as follows.

Throughout the paper, every surface is assumed to be compact and oriented. We assume that our triangulation of a surface satisfies the following conditions:
\begin{itemize}
\item Every edge has an orientation; the endpoints of an edge may coincide.

\item Every triangle ($2$-cell) $\sigma$ is surrounded by three edges $\partial_0 \sigma, \partial_1 \sigma$, and $\partial_2 \sigma$ as is depicted in Figure \ref{fig 1}. We do not assume that $\partial_0 \sigma$, $\partial_1 \sigma$, and $\partial_2 \sigma$ are distinct.
\end{itemize}
Let $\SSS_n$ be the set of $n$-simplices of $S$.
A \textit{$\hG$-labelling of $S$} is a function $f \colon \SSS_1 \to \hG$ satisfying $f(\partial_1 \sigma) = f(\partial_2 \sigma) \cdot f(\partial_0 \sigma)$ for every $\sigma \in \SSS_2$.
We call a triple $(S,\SSS, f)$ consisting of a triangulated surface $(S,\SSS)$ together with a $\hG$-labelling $f$ a \textit{$\hG$-simplicial surface}. A \textit{$\hG$-simplicial surface with boundary $x$} is a $\hG$-simplicial surface $(S,\SSS, f)$ such that $\partial S$ has only one edge and $f$ sends it to $x$.

Note that if a $\hG$-labelling of $(S,\SSS)$ is given, then there exists a continuous map $f' \colon S \to BG$ sending $e \in \SSS_1$ to the loop whose homotopy class is $f(e)$. Note that if a $\hG$-labelling of $(S,\SSS)$ is given, then there exists a continuous map $f' \colon S \to BG$ sending $e \in \SSS_1$ to the loop associated to $f(e)$.
Conversely, if a continuous map $f' \colon S \to BG$ is given, then there exists a $\hG$-labelling $f$ of $S$ such that the associated continuous map of $f$ is homotopy equivalent to $f'$. Therefore for $x \in [\hG,\hG]$, the commutator length of $x$ coincides with the minimum genus of a $\hG$-simplicial surface with boundary $x$.

\begin{figure}[t]
\begin{picture}(100,100)(0,0)
\put(0,40){\vector(3,-2){60}}
\put(0,40){\vector(3,1){96}}
\put(60,0){\vector(1,2){36}}

\put(48,32){$\sigma$}

\put(80,28){$\partial_0 \sigma$}
\put(18,8){$\partial_2 \sigma$}
\put(32,60){$\partial_1 \sigma$}

\end{picture}
\caption{} \label{fig 1}
\end{figure}

Now we are ready to define $(\hG,\bG)$-simplicial surfaces.
A \textit{$(\hG,\bG)$-labelling of $S$} is a $\hG$-labelling $f \colon \SSS_1 \to \hG$ such that either $f(\partial_0 \sigma)$ or $f(\partial_2 \sigma)$ belongs to $\bG$ for every $\sigma \in \SSS_2$.
We call a  triple $(S,\SSS, f)$ consisting of triangulated surface $(S,\SSS)$ together with a $(\hG,\bG)$-labelling $f$ a \textit{$(\hG,\bG)$-simplicial surface}.

It turns out that there is a close relation between $(\hG,\bG)$-simplicial surfaces and $(\hG,\bG)$-commutators.
More precisely, we show that for an element $x$ of $\hG$ there exists a $(\hG,\bG)$-simplicial surface with boundary $x$ if and only if $x$ is contained in $[\hG,\bG]$ (see Section \ref{section Geometric characterization}).
Moreover, $(\hG,\bG)$-simplicial surfaces give the following
interpretation of $(\hG,\bG)$-commutator lengths.

\begin{thm}[Geometric interpretation of the $(\hG,\bG)$-commutator length, Theorem \ref{thm 6.2}] \label{main thm 2}
Let $\hG$ be a group and $\bG$ a normal subgroup of $\hG$. For an element $x$ in $[\hG,\bG]$, the $(\hG,\bG)$-commutator length of $x$ coincides with the minimum of the genus of a connected $(\hG,\bG)$-simplicial surface with boundary $x$.
\end{thm}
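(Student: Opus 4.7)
I would prove both inequalities in the claimed equality $\cl_{\hG, \bG}(x) = \min\text{-genus}$ separately, adapting the classical proof for $\cl_{\hG}$. For the direction $\cl_{\hG, \bG}(x) \geq \min\text{-genus}$ (the construction): given $x = \prod_{i=1}^n [\hg_i, \bg_i]$ with $\hg_i \in \hG$ and $\bg_i \in \bG$, I construct a $(\hG, \bG)$-simplicial surface of genus $n$ with boundary $x$. Starting from the classical $(4n+1)$-gon $P$ with boundary word $\hg_1 \bg_1 \hg_1^{-1} \bg_1^{-1} \cdots \hg_n \bg_n \hg_n^{-1} \bg_n^{-1} x^{-1}$, glued with the standard pairwise side identifications to produce a genus-$n$ surface with one boundary edge labelled $x$, the heart of the argument is to triangulate $P$ so that every triangle satisfies the $(\hG, \bG)$-condition. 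Since $[\hG, \bG] \subseteq \bG$ by normality, every partial product $[\hg_1, \bg_1] \cdots [\hg_k, \bg_k]$ lies in $\bG$. I cut $P$ by diagonals from a fixed vertex labelled by these partial products, reducing to a triangulation of pentagons of the form ``$\hg, \bg, \hg^{-1}, \bg^{-1}, [\hg, \bg]^{-1}$'' together with a handful of all-$\bG$ triangles. For each such pentagon, I use diagonals between $v_1, v_3$ and between $v_1, v_4$ (labelled $\bg \hg^{-1}$ and $\bg \hg \bg^{-1}$ respectively, with appropriate orientations); the resulting three triangles have $\partial_0$ or $\partial_2$ equal to one of $[\hg, \bg]^{-1}$, $\bg$, or $\bg^{-1}$, all lying in $\bG$.

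For the reverse inequality $\cl_{\hG, \bG}(x) \leq \min\text{-genus}$ (the extraction): given a connected $(\hG, \bG)$-simplicial surface $(S, \mathscr{S}, f)$ of genus $n$ with boundary $x$, I argue by induction on $n$. The base case $n = 0$ is immediate since $S$ is a disk and the boundary must represent the identity of $\hG$. For $n \geq 1$, I would seek a non-separating simple closed edge-path $\gamma$ in the 1-skeleton of $S$ whose edges all carry labels in $\bG$, together with a dual curve $\delta$ intersecting $\gamma$ once; the pair $(\delta, \gamma)$ represents a $(\hG, \bG)$-commutator $[\delta, \gamma]$. Cutting $S$ along this handle produces a $(\hG, \bG)$-simplicial surface of genus $n-1$ whose boundary differs from $x$ by $[\delta, \gamma]^{-1}$ (up to conjugation), and the $(\hG, \bG)$-labelling on the remaining triangles is inherited. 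The induction hypothesis then yields a decomposition of the modified boundary into $n-1$ $(\hG, \bG)$-commutators, and composing with $[\delta, \gamma]$ gives the desired decomposition of $x$ into $n$ $(\hG, \bG)$-commutators.

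The principal obstacle is locating the $\bG$-labelled handle curve $\gamma$ in the inductive step. While the $(\hG, \bG)$-condition provides an abundance of $\bG$-labelled edges locally (each triangle has at least one short side in $\bG$), extracting a globally essential simple closed $\bG$-curve requires showing that the $\bG$-edge subgraph of $S^{(1)}$ has sufficient global topology. I anticipate resolving this by refining the triangulation via 1-3 and 2-2 Pachner moves --- both preserve the $(\hG, \bG)$-condition, since in a 1-3 subdivision the identity $e \in \bG$ is available as a label for any of the new interior edges, and in a 2-2 flip the labels of the new diagonal lie in $\bG$ whenever one of the old short sides did --- combined with a combinatorial or homological argument showing that in a sufficiently refined triangulation the $\bG$-subgraph carries a non-trivial handle curve.
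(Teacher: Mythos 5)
Your construction direction (build a genus-$n$ surface from $x=\prod_{i=1}^n[\hg_i,\bg_i]$) is essentially the paper's: triangulate the $(4n+1)$-gon via diagonals from one vertex labelled by partial products $c_1\cdots c_k\in[\hG,\bG]\subset\bG$, then triangulate each commutator pentagon with the all-$\bG$-short-sides choice. The difference in which diagonals you draw is cosmetic.

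The extraction direction, however, has a genuine gap, and it is precisely at the point you flag as "the principal obstacle." You propose to find a non-separating simple closed \emph{edge path} in the 1-skeleton all of whose edges are $\bG$-labelled, after refining the triangulation by Pachner moves. Two problems. First, the claim that a 1--3 move can always be done with $e\in\bG$ labelling the new edges is not obviously compatible with the orientation conventions built into the $\Delta$-complex structure and the labelling rule $f(\partial_1\sigma)=f(\partial_0\sigma)\cdot f(\partial_2\sigma)$; one has to check that there is a consistent assignment of the three new triangles to $\partial_0,\partial_1,\partial_2$ roles so that all three new labels can be taken in $\bG$, and you don't do this. Second and more seriously, even granting that refinement preserves the $(\hG,\bG)$-condition, you give no argument that the $\bG$-labelled subgraph ever acquires a non-separating cycle; the "combinatorial or homological argument" is a placeholder, not a proof, and it is far from clear that it can be supplied, since the $(\hG,\bG)$-condition only forces one short side per triangle to be in $\bG$, which gives a spanning-forest-like object locally but no control on its global homology. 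The paper sidesteps this entirely: instead of looking for a $\bG$-labelled \emph{edge} curve, it builds a 1-complex $C$ by joining the midpoints of the two non-$\bG$-labelled edges of each triangle not contained in the $\bG$-subcomplex $S'$. This $C$ automatically consists of disjoint simple closed curves lying in the interiors of triangles, $S'$ deformation retracts $S-C$, and each component of $C$ can be \emph{slid} into $S'$ (using that each such triangle has one $\bG$-labelled short side). One then takes a maximal collection of non-separating curves from $C$ (and, if fewer than $n$, supplements from $S-C$), applies the classification of surfaces to write $x=[\gk_1,h_1]\cdots[\gk_n,h_n]$, and proves $h_i\in\bG$ by transporting $\gamma_i$ into $S'$ via the homotopy and using normality of $\bG$. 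That midpoint-curve construction is the idea you would need and do not have. Finally, a smaller issue: cutting along a handle $\gamma\cup\delta$ of a genus-$n$, one-boundary surface yields a genus-$(n-1)$ surface with \emph{two} boundary components, not one, so the statement of your induction hypothesis would need to be adjusted to allow multiple boundary components (or to assert that one boundary bounds a disk), which you do not address.
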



\subsection{$\bG$-quasimorphisms and filling norms}

The defect $D \colon \QQQ(\hG) \to \RR$ is a seminorm on the space $\QQQ(\hG)$ of quasimorphisms, and the kernel of $D$ is the space $\HH^1(\hG)$ of homomorphisms from $\hG$ to $\RR$. Thus the space $\QQQ(\hG) / \HH^1(\hG)$ is a normed space equipped with the norm induced by $D$.

Let $C_n(\hG)$ be the inhomogeneous complex of the group $\hG$. Namely, $C_n(\hG)$ is the free $\RR$-module generated by $G^n$, and the differential $\partial \colon C_n(\hG) \to C_{n-1}(\hG)$ is defined by
\[\partial(\gk_1, \cdots, \gk_n) = (\gk_2, \cdots, \gk_n) + \sum_{i=1}^{n-1} (-1)^i (\gk_1, \cdots, \gk_i \gk_{i+1}, \cdots, \gk_n) + (-1)^n (\gk_1, \cdots, \gk_{n-1}).\]
Regard $C_n(\hG)$ as the normed space endowed with the $\ell^1$-norm. Then the space of cycles $Z_n(\hG)$ is a closed subspace and hence $C_n(\hG) / Z_n(\hG)$ is a normed space. An important observation in the proof of the original Bavard duality is to identify the normed space $\QQQ(\hG) / \HH^1(\hG)$ with the
continuous dual of $C_2(\hG) / Z_2(\hG)$; then the Hahn--Banach theorem was applied.


In the case of $G$-invariant quasimorphisms, we instead consider  the space of $\bG$-quasimorphisms \cite{EP06}, defined as follows.
A function $f \colon \hG \to \RR$ is an \textit{$\bG$-quasimorphism} if there exists a non-negative number $D''$ such that $|f(\gk x) - f(\gk ) - f(x)| \le D''$ and $|f(xg) - f(x) - f(\gk )| \le D''$ for every $g \in \hG$ and every $x \in \bG$. We call the minimum of such $D''$ the \textit{defect} of the $\bG$-quasimorphism $f$, and denote it by $D''(f)$. We call an $\bG$-quasimorphism with $D''(f) = 0$ an \textit{$\bG$-homomorphism}.

It turns out that $\bG$-quasimorphisms are closely related to $G$-invariant quasimorphisms.
More precisely, a homogeneous quasimorphism $f$ on $\bG$ is $G$-invariant if and only if $f$ extends to $\hG$ as an $\bG$-quasimorphism (see Section 2).

As is the case of the defect of usual quasimorphisms, the defect $D''$ of $\bG$-quasimorphisms is a seminorm of the space $\QQQ_\bG(\hG)$ of $\bG$-quasimorphisms, and the kernel of $D''$ is the space $\HH^1_\bG(\hG)$ of $\bG$-homomorphisms. Then the normed space $\QQQ_\bG(\hG) / \HH^1_\bG(\hG)$ is identified with the
continuous dual of $C_2'(\hG) / Z_2'(\hG)$.
Here $C_2'(\hG)$ is the submodule of $C_2(\hG)$ generated by the set of elements $(\gk_1, \gk_2) \in \hG \times \hG$ such that either $\gk_1$ or $\gk_2$ belongs to $\bG$, and $Z_2'(\hG)$ is $C_2'(\hG) \cap Z_2(\hG)$.

Let $B_1'$ be the image of $\partial \colon C_2' \to C_1$. We 
regard $B_1'$
as a normed space whose norm is given by the isomorphism $C_2' / Z_2' \xrightarrow{\cong} B_1'$.
It turns out that $x \in [\hG,\bG]$ implies that $x \in B_1'$, and the following limit exists:
\[\fill_{\hG,\bG}(x) = \lim_{n \to \infty} \frac{\| x^n\|}{n}.\]
We call $\fill_{\hG,\bG}(x)$ the \textit{$(\hG,\bG)$-filling norm of $x$}.
Using the geometric
interpretation of the $(\hG,\bG)$-commutator length, we show the following theorem:

\begin{thm}[Theorem \ref{thm 6.1}] \label{main thm 3}
For each $x \in [\hG,\bG]$,
\[\fill_{\hG,\bG}(x) = 4 \cdot \scl_{\hG,\bG}(x).\]
\end{thm}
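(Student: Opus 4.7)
The plan is to translate between $(\hG,\bG)$-simplicial surfaces and $2$-chains in $C_2'(\hG)$, passing through the geometric interpretation of $\cl_{\hG,\bG}$ from Theorem~\ref{main thm 2}. The factor of $4$ arises from Euler's formula: a minimum-vertex triangulation of a compact oriented surface of genus $g$ with one boundary edge has exactly $4g-1$ two-cells.

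For the upper bound $\fill_{\hG,\bG}(x) \le 4\scl_{\hG,\bG}(x)$, I would associate to each $(\hG,\bG)$-simplicial surface $(S,\SSS,f)$ the $2$-chain
\[\Phi(S,\SSS,f)\;=\;\sum_{\sigma\in\SSS_2}\varepsilon(\sigma)\,\bigl(f(\partial_0\sigma),\,f(\partial_2\sigma)\bigr),\]
where $\varepsilon(\sigma)\in\{+1,-1\}$ records the orientation of $\sigma$ relative to that of $S$. The $(\hG,\bG)$-labelling condition immediately places $\Phi(S,\SSS,f)$ in $C_2'(\hG)$. Using $\partial(g_1,g_2)=(g_2)-(g_1g_2)+(g_1)$, the interior edges cancel pairwise, leaving $\partial\Phi(S,\SSS,f)=\pm(x^n)$ whenever $\partial S$ consists of a single edge labelled $x^n$. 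Applied to a minimum-genus surface from Theorem~\ref{main thm 2} realised on a $(4g_n+1)$-gon (with $g_n:=\cl_{\hG,\bG}(x^n)$) fan-triangulated into $4g_n-1$ two-cells, this gives $\|(x^n)\|_{B_1'}\le 4\cl_{\hG,\bG}(x^n)-1$; dividing by $n$ and letting $n\to\infty$ yields the desired inequality.

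For the lower bound $4\scl_{\hG,\bG}(x)\le \fill_{\hG,\bG}(x)$, I would reverse the construction. Fix $\varepsilon>0$ and, for each large $n$, pick $w\in C_2'(\hG)$ with $\partial w=(x^n)$ and $\|w\|_1\le\|(x^n)\|_{B_1'}+\varepsilon$. After approximating its real coefficients by rationals and clearing a common denominator $N$, we obtain an integer chain $Nw=\sum_i\varepsilon_i(g_i,h_i)$. Since $x^n\in[\hG,\bG]\subset\bG$, I would append $N-1$ splicing cells $-(x^{kn},x^n)$ (each in $C_2'$) to form a chain $w''$ with $\partial w''=(x^{Nn})$ and $\|w''\|_1\le N\|w\|_1+(N-1)$. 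Gluing the abstract triangles in the support of $w''$ along matching oriented edges yields a pseudo-surface with single boundary edge labelled $x^{Nn}$, and the cancellation in $\partial w''$ guarantees every interior edge is shared by exactly two $2$-cells. A standard vertex-link surgery---acting only at non-manifold vertices, and therefore preserving every triangle together with its label---promotes this pseudo-surface to a genuine $(\hG,\bG)$-simplicial surface (take the connected component containing the boundary) whose genus, by Euler's formula, is at most $\|w''\|_1/4+O(1)$. Theorem~\ref{main thm 2} then bounds $\cl_{\hG,\bG}(x^{Nn})$ by the same quantity; dividing by $Nn$ and taking $n\to\infty$ completes the argument.

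The main obstacle is the second direction, where one must fabricate an honest simplicial surface out of an \emph{a priori} real-valued $2$-chain. The rational approximation and denominator-clearing step is routine and is absorbed into the stable limit; the delicate step is the surgery that promotes the raw $2$-complex into a surface. The crucial observation is that this surgery acts only at vertices, so it introduces no new $2$-cells and preserves the $(\hG,\bG)$-labelling intact, which makes the chain-to-surface correspondence faithful enough to convert $\ell^1$-bounds on fillings into bounds on genera.
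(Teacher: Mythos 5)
Your proposal is correct and follows the same overall strategy as the paper: translate between $\ell^1$-bounded $2$-chains in $C_2'(\hG)$ and $(\hG,\bG)$-simplicial surfaces, extract the factor of $4$ from Euler-characteristic arithmetic for a triangulated surface with one boundary edge, and absorb the rational-to-integral approximation error into the stable limit. Two organizational points deserve comparison. For the \emph{upper} bound $\fill_{\hG,\bG}(x)\le 4\,\scl_{\hG,\bG}(x)$, you route the argument through Theorem~\ref{main thm 2} and the surface-to-chain map $\Phi$, whereas the paper gets this inequality purely algebraically (Lemmas \ref{lem 5.1} and \ref{lem 5.2}, then Proposition~\ref{prop 3.4} and Corollary~\ref{cor 3.5}): $\|[g,h]\|'\le 3$ via the explicit identity $\partial([g,h],hg)-\partial(g,h)+\partial(h,g)=[g,h]$, and $\|xy\|'\le \|x\|'+\|y\|'+1$, which chain together to give $\|x\|'\le 4\,\cl_{\hG,\bG}(x)-1$ with no surfaces at all. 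Your route is logically sound (no circularity, since Theorem~\ref{main thm 2} is independent of Theorem~\ref{thm 6.1}), but the paper's is the more economical one for this direction and avoids having to verify that the fan-triangulated polygon really supports a $\Delta$-complex structure after side identifications. For the \emph{lower} bound, your rational-approximation-and-clear-denominators step is doing the same job as the paper's passage through the integral filling norm $\|\cdot\|'_{\ZZ}$ and the unproved-but-standard fact $\lim_n \|nx\|'_{\ZZ}/n=\|x\|'$; and your $N-1$ splicing cells $-(x^{kn},x^n)$ play the role of the paper's correction $c'=c-\sum_{i=1}^{n-1}(x,x^i)$ converting the cycle $n\cdot x$ into $x^n$. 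One place where you are actually more careful than the paper: in Lemma~\ref{lem 6.2} the paper asserts without comment that the quotient of the disjoint triangles along matched edges is a surface, but a priori a vertex link may be a wedge of several circles; your explicit ``vertex-link surgery'' (splitting each non-manifold vertex into one copy per link component, which leaves the $2$-cells, the edges, and hence the labelling untouched and can only raise the vertex count, so only improves the inequality $s=4g+2p-3\ge 4g-1$) closes this gap, and taking the boundary component after the surgery is harmless since discarding closed components only decreases $s$. In short: same approach, a slightly longer but still valid upper bound, and a more scrupulous treatment of the chain-to-surface step.
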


By applying the Hahn--Banach theorem to $B_1'$, we deduce our duality theorem (Theorem \ref{main thm}) from this theorem. This is the outline of the proof of Theorem \ref{main thm}.

\subsection{On the equivalence of $\scl_\hG$ and $\scl_{\hG,\bG}$ on $[\hG,\bG]$}
As an application of Theorem \ref{main thm}, we
study conditions under which $\scl_\hG$ and $\scl_{\hG,\bG}$ are (bi-Lipschitzly) equivalent on $[\hG,\bG]$.
Two functions $\varphi, \psi \colon X \to \RR$ on a set $X$ are \emph{bi-Lipschitzly equivalent}, or \textit{equivalent} for short, if there exists $C \ge 1$ such that $C^{-1} \cdot \psi(x) \le \varphi(x) \le C \cdot \psi(x)$ holds for every $x \in X$.
Let $q \colon G \to G/N$ denote the projection.
Kawasaki and Kimura \cite{KK} showed that under certain
conditions, $\scl_{\hG,\bG}$ and $\scl_\hG$ are equivalent on $[\hG,\bG]$.
Using Theorem \ref{main thm}, we can relax the conditions of Kawasaki and Kimura, and obtain the following theorem:

\begin{thm}[Theorem \ref{thm scl equivalence}] \label{scl eq}
Assume that there exists a subgroup $\Lambda$ of finite index of $\hG / \bG$ having a group homomorphism $s \colon \Lambda \to \hG$ such that $\ppi \circ s (x) = x$ for every $x \in \Lambda$.
Then, for every $x \in [\hG,\bG]$, the following inequalities hold:
\[
\scl_\hG(x) \le \scl_{\hG, \bG}(x) \le 2 \cdot \scl_\hG(x).
\]
\end{thm}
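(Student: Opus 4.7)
First I would handle the easy direction $\scl_\hG(x)\le\scl_{\hG,\bG}(x)$: every $(\hG,\bG)$-commutator is in particular an $\hG$-commutator, so $\cl_\hG\le\cl_{\hG,\bG}$ pointwise on $[\hG,\bG]$, and the inequality persists after stabilization.

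For the nontrivial bound $\scl_{\hG,\bG}(x)\le 2\,\scl_\hG(x)$, I would combine the classical Bavard duality (Theorem \ref{thm Bavard}) with the main Bavard duality (Theorem \ref{main thm}) to reduce the inequality to the following extension problem: for every homogeneous $\hG$-invariant quasimorphism $f\in\QQQ^{\mathrm{h}}(\bG)^\hG-\HH^1(\bG)^\hG$, produce a homogeneous quasimorphism $F\in\QQQ^{\mathrm{h}}(\hG)-\HH^1(\hG)$ with $F|_\bG=f$ and $D(F)\le 2\,D(f)$. Then, since $x\in[\hG,\bG]\subseteq\bG$, we would have $F(x)=f(x)$ and hence $|f(x)|/D(f)\le 2|F(x)|/D(F)$, giving the desired inequality after taking suprema over $f$ and $F$ respectively.

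To construct such an $F$, I would first replace $\Lambda$ by its normal core in $\hG/\bG$, so that without loss of generality $\Lambda$ is normal in $\hG/\bG$, the subgroup $\tilde\Lambda:=q^{-1}(\Lambda)$ is normal in $\hG$ of finite index $k$, and via $s$ the semidirect product decomposition $\tilde\Lambda=\bG\rtimes\Lambda$ holds. I would then carry out three successive extensions. (i) Define $\hat F_0\colon\tilde\Lambda\to\RR$ by $\hat F_0(n\cdot s(\lambda)):=f(n)$; using that a homogeneous $\hG$-quasi-invariant quasimorphism is automatically $\hG$-invariant, a direct check gives $D(\hat F_0)\le D(f)$. (ii) Choosing coset representatives $\tau_1,\dots,\tau_k$ of $\tilde\Lambda$ in $\hG$ and letting $\sigma_g$ be the permutation on $\{1,\dots,k\}$ induced by left multiplication by $g$, define the induced quasimorphism
\[F_0(g):=\frac1k\sum_{i=1}^k \hat F_0\bigl(\tau_{\sigma_g(i)}^{-1}\,g\,\tau_i\bigr);\]
the averaging, together with the cocycle identity $\sigma_{g_1g_2}=\sigma_{g_1}\sigma_{g_2}$ and normality of $\tilde\Lambda$, yields $D(F_0)\le D(\hat F_0)$, while the triviality of $\sigma_x$ for $x\in\bG$ and $\hG$-invariance of $f$ give $F_0|_\bG=f$. (iii) Homogenize: $F(g):=\lim_n F_0(g^n)/n$; this preserves $F|_\bG=f$ since $f$ is already homogeneous, and the standard estimate for homogenization yields $D(F)\le 2\,D(F_0)\le 2\,D(f)$. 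Finally, $F$ is not a homomorphism, as its restriction to $\bG$ is $f\notin \HH^1(\bG)^\hG$.

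The main technical obstacle lies in showing that the cumulative factor of $2$ appears exactly once across the three extension steps: steps (i) and (ii) must not inflate the defect beyond a factor of $1$, and only the homogenization step (iii) should contribute the factor of $2$. Step (i) requires the semidirect-product structure essentially (the $\hG$-invariance of $f$ absorbs the twist $s(\lambda)n s(\lambda)^{-1}$), and step (ii) relies on the averaged transfer to cancel defect contributions uniformly across cosets; handling the interplay of these two steps with the final homogenization is the delicate bookkeeping at the heart of the argument.
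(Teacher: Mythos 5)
Your proposal is correct and follows the same high-level strategy as the paper (extend $f$ to $\hG$ with defect at most $D(f)$, homogenize to lose a factor of $2$, then apply the two Bavard dualities), but the construction of the extension is organized differently. The paper's Proposition~\ref{prop 7.3} builds $f'$ in a single step via the formula $f'(g) = \frac{1}{\#B}\sum_{b \in B} f\bigl(g\,t(b\,\ppi(g))^{-1} t(b)\bigr)$, where $B$ is a set of coset representatives so that $\Lambda \times B \to \hG/\bG$ is a bijection and $t$ is a set-theoretic section of $\ppi$ satisfying $t(\lambda x) = s(\lambda)t(x)$; the $\Lambda$-equivariance of $t$ combined with the $\ad$-invariance of $f$ (automatic by Lemma~\ref{lem 3.1}) gives $D(f') \le D(f)$. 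Your construction instead factors the extension through the intermediate subgroup $\tilde\Lambda = \ppi^{-1}(\Lambda) = \bG\rtimes s(\Lambda)$: step (i) uses the splitting to extend $f$ to $\tilde\Lambda$ with no averaging at all, and step (ii) performs a transfer-type average over cosets of $\tilde\Lambda$ in $\hG$. Both routes deliver the crucial bound $D(F_0)\le D(f)$. Your decomposition is the more modular one --- it cleanly separates the role of the splitting homomorphism $s$ from the finite-index averaging --- while the paper's single formula is more compact and, via the general bound $D(f')\le D(f)+3D'(f)$, also covers non-homogeneous $\hG$-quasi-invariant quasimorphisms, a generality not needed for this theorem. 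Two minor remarks: passing to the normal core of $\Lambda$ is harmless but unnecessary, since the transfer in step (ii) works for any finite-index subgroup (the cocycle identity $\sigma_{g_1g_2}=\sigma_{g_1}\sigma_{g_2}$ and the reindexing in the defect estimate do not need normality of $\tilde\Lambda$); and the claim that $F\notin\HH^1(\hG)$ implicitly uses that $f$, being homogeneous and $\hG$-quasi-invariant, is genuinely $\ad$-invariant, so $f\notin\HH^1(\bG)^\hG$ indeed forces $f$ to be a non-homomorphism.
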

The condition in Theorem~\ref{scl eq} is stated as the short exact sequence $1\to \bG\to \hG \to \hG/\bG\to 1$ \emph{virtually splits} (see Definition~\ref{defn=vs}) in some literature.
Theorem~\ref{scl eq} and Proposition~\ref{prop=extension} below provide an obstruction to a virtual splitting.

The following are examples satisfying the assumptions of Theorem \ref{scl eq}:
\begin{enumerate}[(1)]
\item The short exact sequence $1\to \bG\to \hG\to \hG/\bG\to 1$ splits, meaning that, there exists a section homomorphism $s \colon \hG/\bG \to \hG$ of $\ppi$.
\item The group $\hG/\bG$ is finite.

\item The group $\hG/\bG$ is \emph{virtually free}, that means, it has a subgroup of finite index isomorphic to a free group.
\end{enumerate}
Note that (2) is a special case of (3) because the trivial group may be seen as the free group of rank zero.


Theorem~\ref{scl eq} is proved by the combination of Bavard's duality theorem in our setting (Theorem~\ref{main thm}) and the following \emph{extension theorem} of $\hG$-invariant quasimorphisms:

\begin{prop}[Extension theorem for $\hG$-invariant quasimorphisms, Proposition~\ref{prop 7.3}]\label{prop=extension}
Assume that the exact sequence $1\to \bG \to \hG \to \hG/\bG \to 1$ virtually splits.
Then, for every $\hG$-invariant homogeneous quasimorphism $f$ on $\bG$, there exists a quasimorphism $f'$ on $\hG$ such that $f'|_\bG = f$ and $D(f') \le D(f)$, where $D(f)$ is the defect of $f$.
\end{prop}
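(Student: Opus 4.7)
Proof plan. My plan is to first prove the statement assuming the short exact sequence is (strictly) split, and then bootstrap to the virtually split case by applying the split construction on a normal finite-index subgroup of $\hG$ followed by a symmetrisation and a transfer up to $\hG$.

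In the split case $\hG = \bG \rtimes_s \Lambda$ with $\Lambda = \hG/\bG$ and homomorphism section $s \colon \Lambda \to \hG$, I define $f'(\bg \cdot s(\lambda)) := f(\bg)$ using the unique factorisation of each element of $\hG$ as $\bg \cdot s(\lambda)$ with $\bg \in \bG$ and $\lambda \in \Lambda$. Writing $(\bg_1 s(\lambda_1))(\bg_2 s(\lambda_2)) = \bg_1 \bigl(s(\lambda_1)\bg_2 s(\lambda_1)^{-1}\bigr) s(\lambda_1\lambda_2)$ and using the strict $\hG$-invariance of $f$ to replace $f\bigl(s(\lambda_1)\bg_2 s(\lambda_1)^{-1}\bigr)$ by $f(\bg_2)$, the defect inequality for $f'$ reduces exactly to the defect inequality for $f$, yielding $D(f') \le D(f)$ with no loss. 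Strictness of the invariance is essential here: any bounded error would survive to the final bound.

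For the virtually split case, first replace $\Lambda$ by its normal core inside $\hG/\bG$ and restrict $s$ accordingly; then $H := \ppi^{-1}(\Lambda)$ is a normal finite-index subgroup of $\hG$ with $H = \bG \rtimes s(\Lambda)$. Applying the split case above to $1 \to \bG \to H \to \Lambda \to 1$ gives a quasimorphism $\tilde f \colon H \to \RR$ extending $f$ with $D(\tilde f) \le D(f)$. Setting $k := [\hG : H]$ and picking coset representatives $t_1 = e, \ldots, t_k$ of $\hG / H$, I symmetrise by
\[
\bar f(h) := \frac{1}{k} \sum_{i=1}^{k} \tilde f(t_i h t_i^{-1}), \qquad h \in H,
\]
which is a quasimorphism on $H$ with $D(\bar f) \le D(\tilde f) \le D(f)$, and whose restriction to $\bG$ reduces to $f$ because the $\hG$-invariance of $f$ makes each summand $\tilde f(t_i n t_i^{-1}) = f(t_i n t_i^{-1}) = f(n)$ for $n \in \bG$. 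Finally, I transfer $\bar f$ up to $\hG$ by
\[
f'(g) := \frac{1}{k} \sum_{i=1}^{k} \bar f\bigl(t_i g \, t_{\sigma_g(i)}^{-1}\bigr), \qquad g \in \hG,
\]
where $\sigma_g \in S_k$ is the permutation determined by $H t_i g = H t_{\sigma_g(i)}$, so each $t_i g t_{\sigma_g(i)}^{-1}$ lies in $H$. A short cocycle computation using $\sigma_{g_1 g_2} = \sigma_{g_2} \circ \sigma_{g_1}$ and the factorisation $t_i (g_1 g_2) t_{\sigma_{g_1 g_2}(i)}^{-1} = \bigl(t_i g_1 t_{\sigma_{g_1}(i)}^{-1}\bigr)\bigl(t_{\sigma_{g_1}(i)} g_2 t_{\sigma_{g_1 g_2}(i)}^{-1}\bigr)$ in $H$ gives $D(f') \le D(\bar f) \le D(f)$; and on $\bG$ the permutation $\sigma_n$ is the identity (since $t_i n t_i^{-1} \in \bG \subset H$ by normality of $\bG$ in $\hG$), so once again $f'(n) = \frac{1}{k}\sum_i f(n) = f(n)$.

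The main obstacle is keeping the defect bound sharp through two successive averagings. A naive alternative passing through the homogenisation of $\tilde f$ would incur a multiplicative loss and yield only $D(f') \le C\, D(f)$ with $C > 1$. The construction via $\bar f$ followed by the transfer avoids this because the factor $1/k$ in front of each $k$-fold sum compensates exactly the $k$ summands arising in the defect estimate, while the strict $\hG$-invariance of $f$ ensures that the restriction $f'|_\bG$ is preserved \emph{without any bounded error} (each averaging on $\bG$ reduces to $k$ copies of the same value of $f$). Note that $\bar f$ is only $\qad$-invariant (not strictly $\hG$-invariant) under conjugation, so $f'|_H \neq \bar f$ in general; nevertheless the restriction identity $f'|_\bG = f$ persists exactly.
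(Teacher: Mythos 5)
Your proof is correct, but it takes a genuinely different route from the paper's. The paper proves the general version (for $\qad$-invariant $f$, with bound $D(f') \le D(f) + 3D'(f)$) in a single shot: it builds a set-theoretic section $t \colon \hG/\bG \to \hG$ of the form $t(\lambda b) = s(\lambda)s'(b)$ (where $\Lambda \times B \to Q$ is a bijection and $s'$ is a set-theoretic section of $B$) and defines $f'(g) = \frac{1}{\#B}\sum_{b\in B} f\bigl(g\cdot t(b\,\ppi(g))^{-1} t(b)\bigr)$ directly; the left $\Lambda$-equivariance $t(\lambda x)=s(\lambda)t(x)$ absorbs the group-theoretic part of the section, and the averaging over $B$ plays the role of your transfer, all in one formula. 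Your decomposition into (i) a clean extension to $H=\ppi^{-1}(\Lambda)$ via the semidirect-product structure and (ii) a cohomological transfer (induction) from the finite-index subgroup $H$ up to $\hG$ is more modular and conceptually transparent, and the defect bookkeeping works out to the same sharp bound $D(f')\le D(f)$ in the homogeneous case. Two small observations: you do not actually need to pass to the normal core of $\Lambda$, nor do you need the symmetrisation step producing $\bar f$. The transfer formula $f'(g) = \frac{1}{k}\sum_i \tilde f(t_i g t_{\sigma_g(i)}^{-1})$ makes sense for any finite-index $H$ (each $t_i g t_{\sigma_g(i)}^{-1}$ lies in $H$ by the definition of $\sigma_g$, with no normality hypothesis), gives $D(f')\le D(\tilde f)$ by the same cocycle computation, and restricts to $f$ on $\bG$ because $t_i n t_i^{-1}\in\bG$ (normality of $\bG$, not of $H$) and $f$ is strictly $\hG$-invariant. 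So your argument can be shortened by dropping $\bar f$ and the normal-core reduction entirely; as written they are harmless but do not do any work.
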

The resulting quasimorphism $f'$ may not be homogeneous in general; nevertheless, via the homogenization process, more precisely, by constructing a new map $\overline{f'}\colon \hG\to \RR$ by
\[
\overline{f'}(x):=\lim_{n\to \infty}\frac{f'(x^n)}{n}
\]
for every $x\in \hG$, we obtain a \emph{homogeneous} quasimorphism $\overline{f'}$ on $\hG$ such that $\overline{f'}$ extends $f$ and $D(\overline{f'})\leq 2D(f)$.
See \cite[Lemma 2.58]{Ca} for more details.

From Theorem~\ref{scl eq}, it is a natural problem to ask when $\cl_{\hG,\bG}$ and $\cl_\hG$ are equivalent on $[\hG,\bG]$. We prove that this equivalence holds if either of (1) and (2) above is satisfied:

\begin{thm}[Theorem \ref{thm cl equivalence 1} and Theorem \ref{thm cl equivalence 2}]\label{thm=cl}
Assume that either of the following conditions is satisfied:
\begin{enumerate}[$(1)$]
\item The projection $G \to \hG/\bG$ has a section homomorphism $\hG/\bG \to \hG$.

\item $\hG/\bG$ is finite.
\end{enumerate}
Then, there exists $C>0$ such that for every $x \in [\hG,\bG]$, the following inequalities hold:
\[
\cl_\hG(x) \le \cl_{\hG,\bG}(x) \le C \cdot \cl_\hG(x).
\]
Moreover, if $(1)$ above is satisfied, then we can take $C=3$.
\end{thm}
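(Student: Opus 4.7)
The lower bound $\cl_\hG(x) \le \cl_{\hG,\bG}(x)$ is immediate, since every $(\hG,\bG)$-commutator is in particular a commutator in $\hG$.

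For the upper bound in case~(1), I would use the section $s$ to identify $\hG = \bG \rtimes A$ with $A = s(\hG/\bG)$, so that each $g \in \hG$ writes uniquely as $g = a_g n_g$ with $a_g \in A$ and $n_g \in \bG$. The key step is to derive the commutator identity
\[
[g, g'] = [a_g, a_{g'}] \cdot C_1 \cdot C_2 \cdot C_3
\]
for all $g, g' \in \hG$, where $C_1, C_2, C_3$ are single $(\hG,\bG)$-commutators depending on $g, g'$. To produce it, expand $[a_g n_g, a_{g'} n_{g'}]$ via $[xy,z] = x[y,z]x^{-1}\cdot[x,z]$ and $[x,yz] = [x,y]\cdot y[x,z]y^{-1}$, obtaining four factors. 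Three of them are commutators with one argument in $\bG$; these become $(\hG,\bG)$-commutators after rewriting $[\bG,\hG]$-commutators using the identity $[b,c] = [c^{-1}, cbc^{-1}]$ (for $b \in \bG$, $c \in \hG$), together with the observation that $\hG$-conjugation preserves $(\hG,\bG)$-commutators (because $\bG$ is normal). The remaining factor is $[a_g, a_{g'}] \in [A,A]\subseteq \hG$, which can be moved to the front via the swap $P \cdot w = w \cdot (w^{-1} P w)$ without disturbing the $(\hG,\bG)$-commutator property of the conjugated $P$.

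Given $x = \prod_{i=1}^n [g_i, g_i']$ with $n = \cl_\hG(x)$, applying the identity term-by-term gives $x = \prod_i (w_i P_i)$, where $w_i = [a_{g_i}, a_{g_i'}]$ and each $P_i$ is a product of three $(\hG,\bG)$-commutators. Iteratively moving each $w_i$ to the left past all preceding $P_j$'s by the same conjugation trick produces
\[
x = (w_1 w_2 \cdots w_n) \cdot Q,
\]
where $Q$ is a product of $3n$ $(\hG,\bG)$-commutators (each a conjugate of one of the original $C_k$'s by an element of $A \subseteq \hG$). The retraction $\phi = s \circ q \colon \hG \to A$ is a homomorphism killing $\bG$---hence killing $Q \subseteq [\hG,\bG]$---and restricting to the identity on $A$; therefore $\phi(x) = w_1 \cdots w_n$. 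But $x \in [\hG,\bG] \subseteq \bG = \ker\phi$ forces $w_1 \cdots w_n = 1$ in $A$, and hence in $\hG$. Thus $x = Q$, giving $\cl_{\hG,\bG}(x) \le 3n = 3\cl_\hG(x)$.

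For case~(2), the analogous strategy uses set-theoretic coset representatives in place of the homomorphic section. The expansion still produces ``coset-representative commutators'', but these are no longer constrained to a subgroup of $\hG$, and their cumulative contribution to $x$, while forced into $\bG$ by $x \in [\hG,\bG]$, is not automatically trivial. Since $\hG/\bG$ is finite, however, these commutators lie in a finite subset of $\hG$; this finiteness allows the residual defect to be absorbed into boundedly many extra $(\hG,\bG)$-commutators, yielding $\cl_{\hG,\bG}(x) \le C \cdot \cl_\hG(x)$ for some $C$ depending only on $|\hG/\bG|$. The hardest part in both cases is the bookkeeping that peels the ``$A$-part'' of each commutator off cleanly; the sharp constant $3$ in case~(1) depends crucially on the honest section trivializing the relevant cocycle, which is why case~(2) yields only an unspecified $C$.
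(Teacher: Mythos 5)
Your argument for case~(1) is correct and is essentially the paper's: the expansion
$[g,g'] = [a_g,a_{g'}]\cdot C_1 C_2 C_3$ with $C_i$ single $(\hG,\bG)$-commutators is exactly the content of the paper's Lemma~\ref{lem 8.4} (taking $\alpha_i=s\circ\ppi(g_i)$, $\beta_i=s\circ\ppi(g_i')$), and the observation that $w_1\cdots w_n \in A\cap\ker\phi = A\cap\bG=\{e\}$ is the same trivialization the paper uses. This part is fine.

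Case~(2), however, is not a proof, and the informal mechanism you describe is wrong on two counts. First, the residual $y = w_1\cdots w_n$ cannot be absorbed by \emph{boundedly} many extra $(\hG,\bG)$-commutators: $y$ is a product of $n$ ``coset-representative commutators'', and even though each $w_i$ lies in a fixed finite set, the product $y$ ranges over an unbounded set as $n\to\infty$, and $\cl_{\hG,\bG}(y)$ can genuinely grow linearly in $n$. The correct statement (and what the paper proves) is $\cl_{\hG,\bG}(y)\le C(s)\cdot n$, not $\cl_{\hG,\bG}(y)\le C'$. Producing this linear bound is where all the work lies, and it requires ideas you don't mention: (a) the nontrivial finiteness of $W(s) = [\hG(s),\hG(s)]/\bigl([\hG(s),\hG(s)]\cap[\hG,\bG]\bigr)$, where $\hG(s)$ is the subgroup generated by the chosen set-theoretic section --- in the paper this rests on Proposition~\ref{prop 8.5}, whose proof uses the extension theorem for $\hG$-invariant quasimorphisms together with injectivity of $\RR$ as an abelian group; (b) a pigeonhole lemma on the finite group $W(s)$ to extract consecutive subwords $w_i\cdots w_j$ of length at most $\#W(s)$ that land in $[\hG,\bG]$; and (c) an induction peeling off such subwords while paying a conjugation. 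Second, the constant you obtain is \emph{not} a function of $\#(\hG/\bG)$ alone: the paper's $C(s)$ is the maximum of $\cl_{\hG,\bG}\bigl([\alpha_1,\beta_1]\cdots[\alpha_k,\beta_k]\bigr)/k$ over a finite set of tuples in $s(Q)$, which depends on the internal structure of $\hG$, $\bG$ and the chosen section, and the paper explicitly notes it is not known whether a constant independent of $(\hG,\bG)$ exists. So for case~(2) there is a genuine gap: you need the finiteness of $W(s)$ and the pigeonhole/induction scheme; ``finiteness of the coset-representative set'' by itself does not close the argument.
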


In case (2),  our estimation of $C$ may be much worse than the constant $2$ in Theorem \ref{scl eq}; see Theorem~\ref{thm 8.11} for more details.
Later, we will generalize Theorem \ref{thm=cl} (2) (see Theorem \ref{thm cl equivalence 3}).

We note that Kawasaki and Kimura \cite{KK} provided an example of $(\hG,\bG)$ where
$\cl_\hG$ and $\cl_{\hG,\bG}$ are not equivalent (moreover, $\scl_\hG$ and $\scl_{\hG,\bG}$ are not equivalent) on $[\hG,\bG]$. See Remark \ref{remark non-equivalence example}. In the subsequent paper \cite{KKMMM}, we have further applications of Theorem \ref{main thm} to the equivalence problem of $\scl_G$ and $\scl_{G,N}$.

\subsection{Organization of this paper}

Section 2 is devoted to the study of algebraic properties of $G$-invariant quasimorphisms and $\bG$-quasimorphisms. In Section 3, we study the space of $\bG$-quasimorphisms and $(\hG,\bG)$-filling norms.
In Section \ref{section Geometric characterization}, we introduce $(\hG,\bG)$-simplicial surfaces and give a geometric 
interpretation of $(\hG,\bG)$-commutator lengths (Theorem \ref{main thm 2}) and prove Theorem \ref{main thm 3}. In Section \ref{section Proof of the main theorem}, we complete the proof of Theorem \ref{main thm}.
In Section~\ref{section=equi_scl}, we prove Proposition~\ref{prop=extension} and Theorem~\ref{scl eq}.
Section~\ref{section=equi_cl} is for the proof of Theorem~\ref{thm=cl}.

Throughout the paper, we use the symbol $\hG$ for a group and $\bG$ for a \emph{normal} subgroup of $\hG$.
For a group $G$, $e_G$ denotes the group unit of $G$.

\subsection*{Acknowledgement}
The authors thank Yuki Arano for discussions. The authors also thank Takumi Yokota for pointing out some typos in an earlier draft in this paper. The authors are grateful to the anonymous referee for insightful comments, which improve the present paper. The first author, the third author, and the fourth author are partially supported by JSPS KAKENHI Grant Number JP18J00765, 19K14536, 17H04822, respectively.

\section{$\bG$-quasimorphisms} \label{section H-qm}
Here we study some algebraic properties of $G$-invariant quasimorphisms and $\bG$-quasimorphisms. First, we introduce the following notation. For real numbers $a$ and $b$ and for a non-negative number $D$, we write $a \sim_D b$ to mean $|b-a| \le D$.

Recall that a function $f$ on $\bG$ is \textit{$\qad$-invariant} if there exists $D' \ge 0$ such that
\[f(\hg x\hg^{-1}) \sim_{D'} f(x)\]
holds for every $g \in \hG$ and every $x \in \bG$. For a $\qad$-invariant quasimorphism $f$ on $\bG$, we write $D'(f)$ to indicate the number
\[\sup \{ |f(\hg x\hg^{-1}) - f(x)| \; | \; g \in \hG, x \in \bG\}.\]
Let $\QQQ(\bG)^{\hG}$ denote the set of $\qad$-invariant quasimorphisms on $\bG$.
We say that a real-valued function $f$ on $\hG$ is \textit{$\ad$-invariant} if $f(\hg x\hg^{-1}) = f(x)$ for every $\gk \in \hG$ and every $x \in \bG$.

Let $f$ be a quasimorphism on $\hG$. For each $x \in \hG$, it is known that there exists a limit
\[\overline{f}(x) = \lim_{n \to \infty} \frac{f(x^n)}{n},\]
and call the function $\overline{f} \colon \hG \to \RR$ the \textit{homogenization of $f$}.
It is known that $\overline{f}$ is a homogeneous quasimorphism \cite[Lemma 2.21]{Ca}.
Before stating the next lemma, recall that a homogeneous quasimorphism $f$ on $\hG$ is  conjugation invariant, \textit{i.e.},  $f(\hg x\hg^{-1}) = f(x)$ for every pair $g$ and $x$ of elements in $\hG$. Indeed, observe that $(\hg x\hg^{-1})^n=\hg x^n\hg^{-1}$ for all $\hg,x\in \hG$ and all $n\in \ZZ$ (see also Section 2.2.3 in \cite{Ca}).

\begin{lem} \label{lem 3.1}
Let $f$ be a $\qad$-invariant quasimorphism on $\bG$. Then its homogenization $\overline{f}$ is  $\ad$-invariant. In particular, if $f$ is homogeneous, then $D'(f) = 0$.
\end{lem}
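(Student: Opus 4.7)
The plan is to exploit the elementary identity $(\hg x \hg^{-1})^n = \hg x^n \hg^{-1}$, which already appeared in the paragraph just before the lemma statement, and then push the $G$-quasi-invariance through the limit defining the homogenization.

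First, I would fix $\hg \in \hG$ and $x \in \bG$. Since $\bG$ is normal in $\hG$, we have $\hg x \hg^{-1} \in \bG$, so it makes sense to evaluate $f$ on both $x^n$ and $(\hg x \hg^{-1})^n = \hg x^n \hg^{-1}$. Applying $G$-quasi-invariance yields
\[
\bigl|f\bigl((\hg x \hg^{-1})^n\bigr) - f(x^n)\bigr| = |f(\hg x^n \hg^{-1}) - f(x^n)| \le D'(f)
\]
for every positive integer $n$. Dividing by $n$ and letting $n \to \infty$, the right-hand side tends to $0$, while by definition the left-hand side tends to $|\overline{f}(\hg x \hg^{-1}) - \overline{f}(x)|$. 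Hence $\overline{f}(\hg x \hg^{-1}) = \overline{f}(x)$, which is precisely the statement that $\overline{f}$ is $\ad$-invariant.

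For the ``In particular'' clause, I would observe that if $f$ is already homogeneous then $f(x^n) = n f(x)$ for every $n$, so the defining limit gives $\overline{f} = f$ on $\bG$. Combining this with the first assertion, $f$ itself is $\ad$-invariant, i.e.\ $f(\hg x \hg^{-1}) = f(x)$ for all $\hg \in \hG$ and $x \in \bG$, which is exactly $D'(f) = 0$.

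There is no real obstacle here: the key point is simply that the error $D'(f)$ in the $G$-quasi-invariance estimate is a single constant independent of $n$, so it becomes negligible after dividing by $n$. The only thing worth double-checking is that one is entitled to evaluate $f$ on $\hg x^n \hg^{-1}$ (which requires $\hg x^n \hg^{-1} \in \bG$), but this is immediate from the normality of $\bG$ in $\hG$.
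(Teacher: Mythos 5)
Your proof is correct and relies on the same key identity $(\hg x \hg^{-1})^n = \hg x^n \hg^{-1}$ together with the $\hG$-quasi-invariance bound, exactly as the paper does. The only difference is cosmetic: you pass directly to the limit in $|f(\hg x^n \hg^{-1}) - f(x^n)| \le D'(f)$ after dividing by $n$, whereas the paper first converts $f(x^n)$ and $f(\hg x^n \hg^{-1})$ to $\overline{f}$-values via the estimate $|f - \overline{f}| \le D(f)$ and the homogeneity $\overline{f}(y^n) = n\overline{f}(y)$; your version is marginally more direct since it avoids invoking that auxiliary bound.
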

\begin{proof}
It is known that $f(x) \sim_{D(f)} \overline{f}(x)$ for every $x \in \hG$ (see Lemma 2.21 of \cite{Ca}). Let $\gk \in \hG$ and $x \in \bG$. For every positive integer $n$, we have
\[n \overline{f}(\hg x\hg^{-1}) = \overline{f}(\gk x^n \hg^{-1}) \sim_{D(f)} f(\gk x^n \gk^{-1}) \sim_{D'(f)} f(x^n) \sim_{D(f)} \overline{f}(x^n) = n \overline{f}(x).\]
Therefore we have an inequality
\[|\overline{f}(\hg x\hg^{-1}) - \overline{f}(x)| \le \frac{2 D(f) + D'(f)}{n}\]
for every positive integer $n$. This implies that $\overline{f}(\hg x\hg^{-1}) = \overline{f}(x)$.
\end{proof}

Here we introduce the following notion relevant to quasimorphisms.

\begin{dfn}
Let $\hG$ be a group and $\bG$ a normal subgroup of $\hG$. A function $f \colon \hG \to \RR$ is called an \textit{$\bG$-quasimorphism} if there exists a non-negative number $D''$ such that
\[|f(\gk x) - f(\gk ) - f(x)| \le D''\]
and
\[|f(xg) - f(x) - f(\gk )| \le D''\]
for every $g \in \hG$ and every $x \in \bG$.
Let $D''(f)$ denote the infimum of such a non-negative number $D''$, and call it the \textit{defect} of the $\bG$-quasimorphism $f$.
\end{dfn}
A concept similar to an $N$-quasimorphism appeared in \cite{EP06} (see also \cite{Ka16}, \cite{Ki}).

\begin{lem} \label{lem 3.3}
Let $f \colon \hG \to \RR$ be an $\bG$-quasimorphism. Then the restriction $f|_\bG$ of $f$ to $\bG$ is a $\qad$-invariant quasimorphism.
\end{lem}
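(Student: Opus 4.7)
The plan is to verify the two defining conditions of a $\hG$-quasi-invariant quasimorphism for $f|_\bG$, directly from the two $\bG$-quasimorphism inequalities, using the normality of $\bG$ in $\hG$.

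First I would establish that $f|_\bG$ is a quasimorphism on $\bG$. This is essentially immediate: for any $h_1,h_2\in \bG$, since $\bG\subseteq \hG$, we may apply the defining inequality of an $\bG$-quasimorphism with the choice ``$\gk:=h_1\in \hG$, $x:=h_2\in \bG$'' to obtain
\[
|f(h_1h_2) - f(h_1) - f(h_2)| \le D''(f).
\]
Hence $f|_\bG$ is a quasimorphism with $D(f|_\bG)\le D''(f)$.

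The main content is then the $\hG$-quasi-invariance, and this is where I would use normality. Given $\gk\in \hG$ and $x\in \bG$, set $y := \gk x \gk^{-1}$, which lies in $\bG$ because $\bG\triangleleft \hG$. The idea is to compute $f(\gk x)$ in two ways using the two $\bG$-quasimorphism inequalities. On one hand, applying the first inequality to $\gk\in \hG$ and $x\in \bG$ gives $f(\gk x)\sim_{D''(f)} f(\gk)+f(x)$. On the other hand, $\gk x = y\gk$, and applying the second inequality to $y\in \bG$ and $\gk\in \hG$ gives $f(\gk x) = f(y\gk)\sim_{D''(f)} f(y)+f(\gk)$. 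Combining these two estimates eliminates $f(\gk)$ and yields
\[
|f(\gk x \gk^{-1}) - f(x)| \le 2D''(f),
\]
which is precisely the $\hG$-quasi-invariance of $f|_\bG$, with $D'(f|_\bG)\le 2D''(f)$.

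There is no substantial obstacle: the argument reduces to bookkeeping with the two one-sided near-additivity conditions, and the only nontrivial ingredient is the normality of $\bG$, which is invoked exactly to ensure that the conjugate $\gk x\gk^{-1}$ remains in $\bG$ so that the second $\bG$-quasimorphism inequality is applicable. The whole proof should fit in a few lines.
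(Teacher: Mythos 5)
Your proof is correct and is essentially the same as the paper's: both show $f(\gk x\gk^{-1})+f(\gk)\sim_{D''(f)} f(\gk x)\sim_{D''(f)} f(\gk)+f(x)$ by applying the two defining $\bG$-quasimorphism inequalities to the single element $\gk x=(\gk x\gk^{-1})\gk$, using normality to ensure $\gk x\gk^{-1}\in\bG$. The resulting bound $D'(f|_\bG)\le 2D''(f)$ matches the paper's.
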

\begin{proof}
By the definition of $\bG$-quasimorphism, $f|_\bG$ is a quasimorphism on $\bG$ whose defect is not larger than $D''(f)$. For $g \in \hG$ and $x \in \bG$, we have
\[ f(\hg x\hg^{-1}) + f(\gk ) \sim_{D''(f)} f(\gk x) \sim_{D''(f)} f(\gk ) + f(x).\]
This means that
\[f(\hg x\hg^{-1}) - f(x) \sim_{2D''(f)} 0,\]
and hence $f$ is a $\qad$-invariant quasimorphism on $\bG$.
\end{proof}

\begin{prop} \label{prop 3.4.1}
Let $f \colon \bG \to \RR$ be a $\qad$-invariant quasimorphism on $\bG$.
Then there exists an $\bG$-quasimorphism $f' \colon \hG \to \RR$ with $f'|_\bG = f$ and $D''(f) \le D(f) + D'(f)$.
Moreover, if $f$ is homogeneous, we can take $f'$ to satisfy $D''(f') = D(f)$.
\end{prop}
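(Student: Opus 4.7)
The plan is to extend $f$ by using a set-theoretic section of the projection $G \to G/N$. Choose a map $s \colon G/N \to G$ such that $s(\bar{y})$ is a representative of each coset $\bar{y} \in G/N$, normalized so that $s(\bar{e}_G) = e_G$. Since $N$ is normal, for every $g \in G$ the element $n_g := g \cdot s(\bar{g})^{-1}$ lies in $N$. Define
\[
f'(g) := f(g \cdot s(\bar{g})^{-1}) = f(n_g).
\]
Because $s(\bar{e}_G) = e_G$ and $\bar{x} = \bar{e}_G$ for $x \in N$, we have $f'(x) = f(x)$ for $x \in N$, so $f'$ extends $f$.

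Next I verify the two inequalities defining an $N$-quasimorphism. Fix $g \in G$ and $x \in N$. Since $xg N = gN = gx N$, we have $s(\overline{xg}) = s(\overline{gx}) = s(\bar{g})$. For the left case,
\[
f'(xg) = f(xg \cdot s(\bar{g})^{-1}) = f(x \cdot n_g),
\]
and because $x, n_g \in N$ and $f$ is a quasimorphism on $N$ of defect $D(f)$,
\[
\bigl| f'(xg) - f'(x) - f'(g) \bigr| = \bigl| f(x n_g) - f(x) - f(n_g) \bigr| \le D(f).
\]
For the right case, set $y := s(\bar{g}) \, x \, s(\bar{g})^{-1}$, which lies in $N$ by normality. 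Then
\[
gx \cdot s(\bar{g})^{-1} = g s(\bar{g})^{-1} \cdot s(\bar{g}) x s(\bar{g})^{-1} = n_g \cdot y,
\]
so using both the quasimorphism property on $N$ and the $G$-quasi-invariance of $f$,
\[
\bigl| f'(gx) - f'(g) - f'(x) \bigr| = \bigl| f(n_g y) - f(n_g) - f(x) \bigr| \le \bigl| f(n_g y) - f(n_g) - f(y) \bigr| + \bigl| f(y) - f(x) \bigr| \le D(f) + D'(f).
\]
Taking the worst of the two estimates yields $D''(f') \le D(f) + D'(f)$.

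For the moreover clause, suppose $f$ is homogeneous. By Lemma~\ref{lem 3.1}, $D'(f) = 0$, so the above computation gives $D''(f') \le D(f)$. Conversely, since $f'|_N = f$, for every $x_1, x_2 \in N$ the identity $f'(x_1 x_2) - f'(x_1) - f'(x_2) = f(x_1 x_2) - f(x_1) - f(x_2)$ shows $D(f) \le D''(f')$. Hence $D''(f') = D(f)$.

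The only delicate step is the right-multiplication estimate, where the natural decomposition introduces the conjugate $s(\bar{g}) x s(\bar{g})^{-1}$ instead of $x$ itself; here the $G$-quasi-invariance hypothesis is exactly what is needed to absorb the discrepancy, and this is also the place where the extra constant $D'(f)$ enters the defect bound in the non-homogeneous case.
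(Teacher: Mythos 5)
Your proof is correct and follows essentially the same route as the paper: choose a set-theoretic section of $\hG \to \hG/\bG$, extend $f$ by its value on the $\bG$-part of the decomposition, and absorb the conjugation that appears in the right-multiplication estimate using $\hG$-quasi-invariance (with $D'(f)=0$ via Lemma~\ref{lem 3.1} in the homogeneous case). The only cosmetic differences are that you write the decomposition as $g = n_g\, s(\bar g)$ with the section-part normalized to zero, whereas the paper writes $g = sh$ and allows $f'$ to take arbitrary values on the section.
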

\begin{proof}
  Fix a (set-theoretical) section of the quotient map $\hG\to \hG/\bG$ such that $e_{\hG/\bG}\mapsto e_\hG$ and let $S$ be its image.
Note that the map $S \times \bG \to \hG$, $(s,x) \mapsto sx$ is a bijection.
Let $f'$ be a real-valued function on $\hG$ which satisfies the following properties:
\begin{enumerate}[(1)]
\item $f'(e_G) = 0$. For an element $s \in S - \{ e_G\}$, let $f'(s)$ be an arbitrary real number.

\item For $s \in S$ and $h \in \bG$, define $f'(sh) = f'(s) + f(h)$.
\end{enumerate}
We show that $f'$ is an $\bG$-quasimorphism such that $D''(f) \le D(f) + D'(f)$ and $f'|_\bG = f$. Let $g \in \hG$ and $x \in \bG$. Let $s \in S$ and $h \in \bG$ 
with $sh = g$. Then we have
\[f'(\gk x) = f'(shx) = f'(s) + f'(hx) \sim_{D(f)} f'(s) + f'(h) + f'(x) = f'(sh) + f'(x) = f'(\gk ) + f'(x),\]
and hence
\[|f'(\gk x) - f'(\gk ) - f'(x)| \le D(f).\]
Next set $y = g^{-1} x g$. Then we have
\[f'(xg) - f'(x) - f'(\gk ) = f'(\gk y) - f'(\gk yg^{-1}) - f'(\gk ) \sim_{D'(f)} f'(\gk y) - f'(y) - f'(\gk ) \sim_{D(f)} 0,\]
and hence
\[|f'(xg) - f'(x) - f'(\gk )| \le D(f) + D'(f).\]
Thus we have shown that $f'$ is an $\bG$-quasimorphism satisfying $D''(f) \le D(f) + D'(f)$.

Suppose that $f$ is homogeneous. Then Lemma \ref{lem 3.1} implies that $D'(f) = 0$, and hence we have $D''(f) \le D(f)$.
On the other hand, it is clear that $D(f) \le D''(f)$. This completes the proof.
\end{proof}

Combining Lemma \ref{lem 3.3} and Proposition \ref{prop 3.4.1}, we have the following equivalence: a quasimorphism $f \colon \bG \to \RR$ is $\qad$-invariant if and only if $f$ admits an extension $f' \colon \hG \to \RR$ which is an $\bG$-quasimorphism.


An \textit{$\bG$-homomorphism} is a function $f \colon \hG \to \RR$ satisfying
\[f(\gk x) = f(\gk ) + f(x) = f(x \gk )\]
for every $g \in \hG$ and every $x \in \bG$.

\begin{cor}
A $\qad$-invariant homomorphism $f \colon \bG \to \RR$ has an extension $f' \colon \hG \to \RR$ which is an $\bG$-homomorphism.
\end{cor}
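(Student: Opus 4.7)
The plan is to deduce this corollary directly from the ``moreover'' clause of Proposition~\ref{prop 3.4.1}. First I would observe that any group homomorphism $f \colon \bG \to \RR$ is automatically a homogeneous quasimorphism of defect zero: additivity $f(xy) = f(x) + f(y)$ gives $D(f) = 0$, and together with $f(x^{-1}) = -f(x)$ it yields $f(x^n) = n\, f(x)$ for every $n \in \ZZ$. Since $f$ is $\qad$-invariant by assumption, all hypotheses of the homogeneous case of Proposition~\ref{prop 3.4.1} are satisfied.

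Applying that clause produces an $\bG$-quasimorphism $f' \colon \hG \to \RR$ with $f'|_\bG = f$ and $D''(f') = D(f) = 0$. The last step is to notice that an $\bG$-quasimorphism with vanishing defect is, by definition, an $\bG$-homomorphism: the two inequalities controlling $D''(f')$ collapse to the exact equalities
\[
f'(gx) = f'(g) + f'(x) \qquad \text{and} \qquad f'(xg) = f'(x) + f'(g)
\]
for every $g \in \hG$ and every $x \in \bG$. Hence $f'$ is the desired $\bG$-homomorphism extension of $f$. There is no real obstacle in the argument; the content of the corollary is already packaged inside the proposition, and the only thing to check is that ``homomorphism'' corresponds to the case $D(f) = 0$ of the hypothesis and to the case $D''(f') = 0$ of the conclusion.
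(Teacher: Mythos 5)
Your argument is correct and coincides with the paper's own proof: both simply note that a homomorphism is a homogeneous quasimorphism with $D(f)=0$, invoke the ``moreover'' clause of Proposition~\ref{prop 3.4.1} to get an $\bG$-quasimorphism $f'$ extending $f$ with $D''(f')=0$, and observe that zero defect means $f'$ is an $\bG$-homomorphism. You merely spell out a couple of routine verifications that the paper leaves implicit.
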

\begin{proof}
Since a homomorphism is homogeneous, it follows from Proposition \ref{prop 3.4.1} that there exists an $\bG$-quasimorphism $f' \colon \hG \to \RR$ with $D''(f') = D(f) = 0$.
\end{proof}

We end this section with
discussions on $(\hG,\bG)$-commutators. Recall that an element $x$ of $\hG$ is a \textit{$($single$)$ $(\hG,\bG)$-commutator} if there exist $g \in \hG$ and $h \in \bG$ such that $x = [\hg,\bg] = \hg\bg\hg^{-1} h^{-1}$. Since
\[[\hg,\bg] = [\hg\bg\hg^{-1}, g^{-1}]
\quad \textrm{and} \quad [\bg,\hg] = [g^{-1}, \hg\bg\hg^{-1}],\]
$x$ is a $(\hG,\bG)$-commutator if and only if there exist $g \in \hG$ and $h \in \bG$ with $x = [h,g]$. As is usual, we write $[\hG,\bG]$ to mean the subgroup of $\hG$ generated by the $(\hG,\bG)$-commutators. Note that $[\hG,\bG]$ is a normal subgroup of $\hG$ and $[\hG,\bG] \subset \bG$.

For an element $x$ in $[\hG,\bG]$, define the \textit{$(\hG,\bG)$-commutator length $\cl_{\hG,\bG}(x)$ of $x$} by
\[\cl_{\hG,\bG}(x) = \min \{ n \; | \; \textrm{There exist $n$ $(\hG,\bG)$-commutators $c_1, \cdots, c_n$ such that $x = c_1 \cdots c_n$}\}.\]
We note that $\cl_{\hG,\bG}$ is a $\hG$-invariant function.

It easily follows from Fekete's lemma that there exists a limit
\[\scl_{\hG,\bG}(x) = \lim_{n \to \infty} \frac{\cl_{\hG,\bG}(x^n)}{n},\]
and we call $\scl_{\hG,\bG}(x)$ the \textit{stable $(\hG,\bG)$-commutator length of $x$}.

\begin{lem} \label{lem 3.6}
For a $\qad$-invariant homogeneous quasimorphism $f \colon \bG \to \RR$, the following equalities hold:
\[D(f) = \sup_{h_1, h_2 \in \bG} |f([h_1, h_2])| = \sup_{g\in \hG, h \in \bG} |f([\hg,\bg])|.\]
In particular, $f([\hg,\bg]) \le D(f)$ holds for every $g \in \hG$ and every $h \in \bG$.
\end{lem}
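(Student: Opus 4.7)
The plan is to prove the three quantities are equal by a sandwich argument. Writing $A = D(f)$, $B = \sup_{h_1,h_2 \in \bG} |f([h_1,h_2])|$, and $C = \sup_{g \in \hG, h \in \bG} |f([\hg,\bg])|$, I observe immediately that $B \leq C$ since $\bG \subseteq \hG$, so every $\bG$-commutator is a valid $(\hG,\bG)$-commutator. It then suffices to establish $A \leq B$ and $C \leq A$; the latter inequality simultaneously gives the \emph{in particular} assertion.

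For the inequality $A \leq B$, I would invoke the classical fact that for any homogeneous quasimorphism on a group, the defect equals the supremum of $|f|$ over single commutators (Calegari, \cite[Lemma 2.24]{Ca}). Applied to $f$ as a homogeneous quasimorphism on $\bG$, this directly yields $D(f) = \sup_{h_1,h_2 \in \bG}|f([h_1,h_2])|$.

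The key step is $C \leq A$, which uses both the normality of $\bG$ and the $\hG$-invariance provided by Lemma \ref{lem 3.1}. Given $g \in \hG$ and $h \in \bG$, normality of $\bG$ ensures $\hg\bg\hg^{-1} \in \bG$, so the factorization $[\hg,\bg] = (\hg\bg\hg^{-1}) \cdot h^{-1}$ expresses a $(\hG,\bG)$-commutator as a product of two elements of $\bG$. Applying the quasimorphism inequality on $\bG$ and using homogeneity to rewrite $f(h^{-1}) = -f(h)$ gives
\[
f([\hg,\bg]) \sim_{D(f)} f(\hg\bg\hg^{-1}) - f(h).
\]
Since $f$ is homogeneous and $\qad$-invariant, Lemma \ref{lem 3.1} tells us that $f$ is in fact $\hG$-invariant, so $f(\hg\bg\hg^{-1}) = f(h)$ and hence $|f([\hg,\bg])| \leq D(f) = A$.

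The only nontrivial ingredient is the Calegari inequality needed for $A \leq B$; the remainder of the argument is a straightforward combination of normality of $\bG$, homogeneity of $f$, and the upgrade from $\qad$-invariance to strict $\hG$-invariance supplied by Lemma \ref{lem 3.1}.
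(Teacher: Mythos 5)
Your proposal is correct and follows essentially the same route as the paper: both cite the classical identity $D(f)=\sup_{h_1,h_2\in\bG}|f([h_1,h_2])|$ from \cite[Lemma 2.24]{Ca}, note the trivial inclusion-induced inequality, and for the remaining direction factor $[\hg,\bg]=(\hg\bg\hg^{-1})\cdot h^{-1}$, apply the quasimorphism defect bound on $\bG$, and then use the $\hG$-invariance supplied by Lemma~\ref{lem 3.1} together with homogeneity to cancel. No meaningful difference from the paper's argument.
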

\begin{proof}
The first equality
\[D(f) = \sup_{h_1, h_2 \in \bG} |f([h_1, h_2])|\]
is known (see Lemma 3.6 of \cite{Bavard} or Lemma 2.24 of \cite{Ca}). The inequality
\[\sup_{h_1, h_2 \in \bG} |f([h_1, h_2])| \le \sup_{g \in \hG, h \in \bG} |f([\hg,\bg])| \]
is obvious. Since $f$ is a homogeneous $\qad$-invariant quasimorphism, $f$ is $\ad$-invariant (Lemma \ref{lem 3.1}) and hence satisfies
\[f([\hg,\bg]) = f(\gk hg^{-1}h^{-1}) \sim_{D(f)} f(\gk hg^{-1}) + f(h^{-1}) = f(h) - f(h) = 0.\]
Thus we have
\[\sup_{g\in \hG, h\in \bG} |f([\hg,\bg])| \le D(f),\]
which completes the proof.
\end{proof}

\section{Filling norms}

Let $\QQQ_\bG = \QQQ_\bG(\hG)$ be the space of $\bG$-quasimorphisms on $\hG$, and $\HH^1_\bG = \HH^1_\bG(\hG)$ the space of $\bG$-homomorphisms on $\hG$. Then the defect $D''$ of $\bG$-quasimorphisms is a seminorm on $\QQQ_\bG$ whose kernel is $\HH^1_\bG$, and hence $\QQQ_\bG / \HH^1_\bG$ is a normed space. The
goal in this section is to identify $\QQQ_\bG / \HH^1_\bG$ with a
continuous dual of a certain normed space arising from the inhomogeneous complex of the group $\hG$.

Let $C_n(\hG)$ denote the free $\RR$-module generated by the $n$-tuple direct product $G^n$. For $i \in \{ 0, 1, \cdots, n \}$, define the map $\partial_i \colon C_{n}(\hG) \to C_{n-1}(\hG)$ by the linear map satisfying
\[
\partial_i (g_1, \cdots, g_n) =
\begin{cases}
(g_2, \cdots, g_n) & (i = 0)\\
(g_1, \cdots, g_{i-1}, g_i g_{i+1}, g_{i+2}, \cdots, g_n) & (i = 1, \cdots, n-1) \\
(g_1, \cdots, g_{n-1}) & (i = n),
\end{cases}
\]
and the differential $\partial \colon C_{n}(\hG) \to C_{n-1}(\hG)$ is the linear map satisfying
\[
\partial( \gk_1, \cdots, \gk_n) = \sum_{i=0}^{n} (-1)^i \partial_i ( \gk_1, \cdots, \gk_n).
\]
Note that in the case $n = 2$, the differential $\partial \colon C_2(G) \to C_1(G)$ is described by
\[
\partial(\gk_1, \gk_2) = \gk_2 - \gk_1 \gk_2 + \gk_1.\]

Let $C_2' = C_2'(\hG)$ be the $\RR$-submodule of $C_2(\hG)$ generated by the set
\[\{ (\gk_1, \gk_2) \in \hG \times G \; | \; \textrm{$\bG$ contains either $\gk_1$ or $\gk_2$}\}.
\]
Set $B_1' = \partial C_2'$ and $Z_2' = Z_2(G ; \RR) \cap C_2'$.
Regard $C_2'$ as a normed space endowed with the $\ell^1$-norm. Then $Z_2'$ is a closed subspace of $C_2'$, and hence $C_2' / Z_2'$ is a normed space. We consider $B_1'$ as a normed space by the isomorphism $C_2' / Z_2' \xrightarrow{\cong} B_1'$, and we write $\| x\|'$ to indicate the norm on $B'_1$.

\begin{lem} \label{lem 5.1}
If $g \in \hG$ and $h \in \bG$, then $[\hg,\bg] \in B_1'$ and $\| [\hg,\bg]\|' \le 3$.
\end{lem}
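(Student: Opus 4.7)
The plan is to exhibit an explicit chain $c\in C_2'$ with $\partial c=[\hg,\bg]$ of $\ell^1$-norm $3$; this simultaneously proves $[\hg,\bg]\in B_1'$ and the desired bound $\|[\hg,\bg]\|'\le 3$. The only structural input needed is normality of $\bG$ in $\hG$, which gives $\hg\bg\hg^{-1}\in \bG$ and hence $[\hg,\bg]=\hg\bg\hg^{-1}\bg^{-1}\in \bG$ as well.

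Concretely, I would take
\[
c \;=\; ([\hg,\bg],\,\bg)\;+\;(\hg\bg\hg^{-1},\,\hg)\;-\;(\hg,\bg).
\]
Membership in $C_2'$ is immediate: in the first simplex both entries lie in $\bG$; in the second, the first entry lies in $\bG$ by normality; in the third, the second entry is $\bg\in \bG$. Applying $\partial(g_1,g_2)=g_2-g_1 g_2+g_1$ termwise and using the identities $[\hg,\bg]\cdot \bg=\hg\bg\hg^{-1}$ and $\hg\bg\hg^{-1}\cdot \hg=\hg\bg$, the four ``interior'' labels $\bg$, $\hg$, $\hg\bg$, $\hg\bg\hg^{-1}$ each occur twice with opposite signs and cancel in pairs, leaving $\partial c=[\hg,\bg]$.

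The real obstacle is finding a chain of length $3$ rather than verifying it. The obvious attempt starts from $[\hg,\bg]=\hg\bg\hg^{-1}\cdot \bg^{-1}$ via $\partial(\hg\bg\hg^{-1},\bg^{-1})$, which reduces the task to writing $\hg\bg\hg^{-1}+\bg^{-1}$ as a boundary in $C_2'$. Since that element has augmentation $2$, any lift in $C_2'$ has $\ell^1$-norm $\ge 2$, and a direct multiset analysis of the resulting equation $\{a_1,b_1,a_2,b_2\}=\{a_1b_1,a_2b_2,\hg\bg\hg^{-1},\bg^{-1}\}$ shows that no two-simplex lift exists unless $\hg$ and $\bg$ commute, forcing this naive approach to total norm $\ge 5$. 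The chain above bypasses the obstruction by replacing $(\hg\bg\hg^{-1},\bg^{-1})$ by $([\hg,\bg],\bg)$: the new simplex still contributes $[\hg,\bg]$ on the boundary but trades the bad $\bg^{-1}$-term for the ``conjugation defect'' $\bg-\hg\bg\hg^{-1}$, which is exactly $\partial\bigl[(\hg,\bg)-(\hg\bg\hg^{-1},\hg)\bigr]$ in $C_2'$. Geometrically, $c$ realizes a one-vertex, three-triangle triangulation of the once-punctured torus whose boundary edge carries the label $[\hg,\bg]$ and each of whose triangles has at least one edge in $\bG$, foreshadowing Theorem~\ref{main thm 2}.
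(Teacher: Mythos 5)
Your proof is correct and takes essentially the same approach as the paper: exhibit an explicit 2-chain in $C_2'$ of $\ell^1$-norm $3$ with boundary $[\hg,\bg]$. The paper's chain is $([\hg,\bg],\,\bg\hg) - (\hg,\bg) + (\bg,\hg)$ rather than your $([\hg,\bg],\,\bg)+(\hg\bg\hg^{-1},\,\hg)-(\hg,\bg)$, but both work, and the surrounding discussion of why shorter chains fail is sound but not needed for the result.
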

\begin{proof}
From the equality
\[\partial ([\hg,\bg], hg) - \partial (\gk ,h) + \partial (h,g) = [\hg,\bg],\]
the assertion follows.
\end{proof}

\begin{lem} \label{lem 5.2}
If $x,y \in [\hG,\bG]$, then $\| xy\|' \le \| x\|' + \| y\|' + 1$.
\end{lem}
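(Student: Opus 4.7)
The plan is to unwind the definition of the norm $\|\cdot\|'$ and then to exhibit an explicit $2$-chain in $C_2'$ with boundary $xy$ built from fillings of $x$ and $y$. Since the boundary map induces an isomorphism $C_2'/Z_2' \xrightarrow{\cong} B_1'$, the norm $\|\cdot\|'$ is precisely the quotient norm, so for each $z \in B_1'$ one has
\[ \| z \|' = \inf\{\, \| c \|_1 : c \in C_2',\ \partial c = z\,\}.\]
A first observation I would record is that the hypothesis $x, y \in [\hG,\bG] \subseteq \bG$ guarantees that the $2$-chain $(x,y)$ itself belongs to $C_2'$; this is the only place where the assumption on $x$, $y$ enters.

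The driving computation is the boundary identity
\[\partial(x,y) = y - xy + x,\]
equivalently $xy = x + y - \partial(x,y)$. Given any $c_x, c_y \in C_2'$ with $\partial c_x = x$ and $\partial c_y = y$, I would then consider
\[c := c_x + c_y - (x,y),\]
which lies in $C_2'$, satisfies $\partial c = xy$, and has $\ell^1$-norm at most $\|c_x\|_1 + \|c_y\|_1 + 1$.

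To conclude, I would fix an arbitrary $\varepsilon > 0$, choose fillings $c_x, c_y$ with $\|c_x\|_1 \le \|x\|' + \varepsilon$ and $\|c_y\|_1 \le \|y\|' + \varepsilon$, plug them into the construction above to obtain $\|xy\|' \le \|x\|' + \|y\|' + 1 + 2\varepsilon$, and then let $\varepsilon \to 0$. I do not anticipate any real obstacle: the whole argument rests on the single fact that $(x,y) \in C_2'$ combined with one line of the bar-complex differential. As a by-product, this also verifies the assertion (foreshadowed in the introduction) that $[\hG,\bG] \subseteq B_1'$, by induction on the number of $(\hG,\bG)$-commutators, with Lemma~\ref{lem 5.1} as the base case.
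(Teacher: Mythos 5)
Your argument is correct and is essentially the same as the paper's: both hinge on the single identity $\partial(x,y) = y - xy + x$ together with the observation that $x,y\in\bG$ forces $(x,y)\in C_2'$, so that $\|xy - x - y\|'\le 1$. The paper phrases this as a triangle inequality directly in $B_1'$ while you phrase it by splicing explicit fillings $c_x, c_y$ via the quotient norm; these are the same computation, and your closing remark about the inductive by-product $[\hG,\bG]\subseteq B_1'$ (base case Lemma~\ref{lem 5.1}) is a sound way to patch the mild circularity that both proofs would otherwise incur when first asserting $x,y\in B_1'$.
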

\begin{proof}
Since $x,y \in [\hG,\bG]$, we have $xy \in [\hG,\bG]$.
Therefore Lemma \ref{lem 5.1} implies that $x,y,xy \in B_1'$.
Since $\partial(x,y) = y - xy + x$, we have
\[\| xy - x - y\|' \le 1.\]
Therefore we have
\[\| xy\|' = \| (x + y) + (xy - x - y) \|' \le \| x + y\|' + \| xy - x - y\|' \le \| x\|' + \| y\|' + 1
,\]
as desired.
\end{proof}

Let $x \in [\hG,\bG]$. Lemma \ref{lem 5.2} implies that
\[\| x^{m+n}\|' + 1 \le (\| x^m\|' + 1) + (\| x^n\|' + 1)\]
for every pair $m$ and $n$ of positive integers. Therefore Fekete's lemma implies that there exists a limit
\[\fill_{\hG,\bG}(x) = \lim_{n \to \infty} \frac{\| x^n\|'}{n}.\]
We call $\fill_{\hG,\bG}(x)$ the \textit{$(\hG,\bG)$-filling norm of $x$}.

\begin{prop} \label{prop 3.4}
Let $x \in [\hG,\bG]$. Then we have an inequality $\| x\|' \le 4 \cdot \cl_{\hG,\bG}(x) - 1$.
\end{prop}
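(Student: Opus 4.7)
The plan is to combine Lemma~\ref{lem 5.1} with an iterated application of Lemma~\ref{lem 5.2}. Write $n = \cl_{\hG,\bG}(x)$ and choose $(\hG,\bG)$-commutators $c_1, \ldots, c_n$ with $x = c_1 c_2 \cdots c_n$. By Lemma~\ref{lem 5.1}, each factor satisfies $\| c_i \|' \le 3$.

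Next, I would establish by induction on $k \ge 1$ that
\[
\| c_1 c_2 \cdots c_k \|' \le 3k + (k-1) = 4k - 1.
\]
The base case $k=1$ is exactly Lemma~\ref{lem 5.1}. For the inductive step, setting $y = c_1 \cdots c_{k-1}$ and $z = c_k$, both $y$ and $z$ lie in $[\hG,\bG]$ (since every $c_i$ does), so Lemma~\ref{lem 5.2} applies and yields
\[
\| c_1 \cdots c_k \|' = \| y z \|' \le \| y\|' + \| z\|' + 1 \le (4(k-1) - 1) + 3 + 1 = 4k - 1.
\]
Taking $k = n$ gives the claimed bound $\| x\|' \le 4 \cdot \cl_{\hG,\bG}(x) - 1$.

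There is essentially no obstacle: the inequality is a direct consequence of the subadditivity in Lemma~\ref{lem 5.2} and the bound in Lemma~\ref{lem 5.1}. The only minor bookkeeping point is that Lemma~\ref{lem 5.2} requires both arguments to be in $[\hG,\bG]$, which is automatic here because any partial product of $(\hG,\bG)$-commutators lies in $[\hG,\bG]$. The constant $4$ comes from the bound $3$ on a single commutator plus a penalty of $1$ for each multiplication step, which is a tight bookkeeping of how often we pay the extra $+1$ in Lemma~\ref{lem 5.2}.
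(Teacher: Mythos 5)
Your proof is correct and follows exactly the same approach as the paper: bound each $(\hG,\bG)$-commutator by $3$ via Lemma~\ref{lem 5.1}, then iterate the subadditivity of Lemma~\ref{lem 5.2} to collect an extra $+1$ per multiplication, giving $3m + (m-1) = 4m-1$. You merely spell out the induction more explicitly than the paper does.
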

\begin{proof}
Suppose that $\cl_{\hG,\bG}(x) = m$, and let $c_1, \cdots, c_m$ be $(\hG,\bG)$-commutators satisfying $x = c_1 \cdots c_m$. Then Lemmas \ref{lem 5.1} and \ref{lem 5.2} imply that
\[\| x\|' \le \| c_1\| + \cdots + \| c_m\| + (m-1) \le 3m+(m-1) = 4m - 1.\]
This completes the proof.
\end{proof}

\begin{cor} \label{cor 3.5}
If $x \in [\hG,\bG]$, then $\fill_{\hG,\bG}(x) \le 4 \cdot \scl_{\hG,\bG}(x)$.
\end{cor}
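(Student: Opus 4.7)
The plan is to deduce this corollary directly from Proposition \ref{prop 3.4} by applying it to the powers $x^n$ and passing to the limit. Since $x \in [\hG,\bG]$ implies $x^n \in [\hG,\bG]$ for every positive integer $n$, Proposition \ref{prop 3.4} applies to $x^n$ and yields
\[ \| x^n \|' \le 4 \cdot \cl_{\hG,\bG}(x^n) - 1. \]

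Dividing both sides by $n$, we obtain
\[ \frac{\| x^n \|'}{n} \le 4 \cdot \frac{\cl_{\hG,\bG}(x^n)}{n} - \frac{1}{n}. \]
Both sides have limits as $n \to \infty$: the left-hand side tends to $\fill_{\hG,\bG}(x)$ by the definition given just before Proposition \ref{prop 3.4} (whose existence is justified by Fekete's lemma applied via Lemma \ref{lem 5.2}), and the right-hand side tends to $4 \cdot \scl_{\hG,\bG}(x)$ since the $-1/n$ term vanishes in the limit. Taking limits yields the desired inequality $\fill_{\hG,\bG}(x) \le 4 \cdot \scl_{\hG,\bG}(x)$.

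There is no real obstacle here: the corollary is a routine consequence of the single-element estimate in Proposition \ref{prop 3.4} combined with the two stabilizations being defined by the same kind of limit. The only thing to verify is that the limit defining $\fill_{\hG,\bG}(x)$ is well-defined, but this has already been established through Lemma \ref{lem 5.2} and Fekete's lemma in the paragraph preceding Proposition \ref{prop 3.4}.
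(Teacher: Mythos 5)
Your proof is correct and takes exactly the same route as the paper's: apply Proposition \ref{prop 3.4} to $x^n$, divide by $n$, and pass to the limit, noting that the $-1/n$ term vanishes.
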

\begin{proof}
By Proposition \ref{prop 3.4}, we have an inequality
\[\frac{\| x^n\|'}{n} \le 4 \cdot \frac{\cl_{\hG,\bG}(x^n)}{n} - \frac{1}{n}.\]
By taking the limits, we have $\fill_{\hG,\bG}(x) \le 4 \cdot \scl_{\hG,\bG}(x)$.
\end{proof}

The equality in Theorem \ref{main thm 3} can be decomposed into the following two inequalities for each $x \in [\hG,\bG]$:
\[\fill_{\hG,\bG}(x) \leq 4 \cdot \scl_{\hG,\bG}(x)\quad \textrm{and}\quad \fill_{\hG,\bG}(x) \geq 4 \cdot \scl_{\hG,\bG}(x).\]
By the arguments above, we have obtained one side of these inequalities. We will prove the other side in the next section, using a geometric
interpretation of $(\hG,\bG)$-commutator lengths.

For a normed space $V$, let $V^{\ast}$ denote the
continuous dual of $V$.

\begin{prop}
The normed spaces $\QQQ_\bG / \HH^1_\bG$ and $(C_2' / Z_2')^{\ast}$ are isometrically isomorphic.
\end{prop}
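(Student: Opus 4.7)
The plan is to construct a natural map $\Phi \colon \QQQ_\bG \to (C_2')^{\ast}$ by viewing an $\bG$-quasimorphism $f$ as a $1$-cochain (i.e., an $\RR$-linear functional on $C_1(\hG) = \RR[\hG]$, which is the same datum as a function $\hG \to \RR$) and pulling back via $\partial$. Concretely, for a generator $(\gk_1, \gk_2)$ of $C_2'$ (so $\gk_1 \in \bG$ or $\gk_2 \in \bG$), set
\[\Phi(f)(\gk_1, \gk_2) = f(\partial(\gk_1, \gk_2)) = f(\gk_1) + f(\gk_2) - f(\gk_1 \gk_2).\]
The defining inequalities of an $\bG$-quasimorphism ensure $|\Phi(f)(\gk_1, \gk_2)| \le D''(f)$ on every generator, so $\Phi(f)$ is a bounded functional on $C_2'$ with its $\ell^1$-norm. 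Since $\Phi(f) = f \circ \partial$ on $C_2'$, it vanishes automatically on $Z_2' = \ker(\partial|_{C_2'})$, hence descends to $(C_2'/Z_2')^{\ast}$.

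Next, we check that this map is isometric. Identify $(C_2'/Z_2')^{\ast}$ with the annihilator of $Z_2'$ in $(C_2')^{\ast}$; since $C_2'$ carries the $\ell^1$-norm with admissible pairs as a basis, the operator norm on $(C_2')^{\ast}$ agrees with the supremum over generators. Hence
\[\|\Phi(f)\|_{(C_2'/Z_2')^{\ast}} = \sup_{(\gk_1, \gk_2)} |f(\gk_1) + f(\gk_2) - f(\gk_1 \gk_2)| = D''(f),\]
where the supremum is over admissible pairs. Therefore $\Phi$ descends to a well-defined isometric linear map $\bar\Phi \colon \QQQ_\bG/\HH^1_\bG \to (C_2'/Z_2')^{\ast}$: its kernel consists precisely of those $f$ with $D''(f) = 0$, which are by definition $\bG$-homomorphisms (note also that $D''(f) = 0$ forces $f(e_\hG) = 0$, so there is no extra constant ambiguity).

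The main obstacle is surjectivity. Given $\alpha \in (C_2'/Z_2')^{\ast}$, we first lift it to a bounded functional $\tilde\alpha \in (C_2')^{\ast}$ vanishing on $Z_2'$. Because $\partial \colon C_2' \to B_1'$ is surjective with kernel $Z_2'$, the functional $\tilde\alpha$ descends to a linear functional $\beta$ on $B_1'$. Now $B_1'$ is a linear subspace of the $\RR$-vector space $C_1(\hG) = \RR[\hG]$, so by choosing a vector space complement we extend $\beta$ to a (possibly unbounded) linear functional on all of $C_1(\hG)$. This step is purely algebraic: we do not invoke Hahn--Banach, since we need no control of the extension away from $B_1'$. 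As $\hG$ is an $\RR$-basis of $C_1(\hG)$, this linear extension is the same datum as a function $f \colon \hG \to \RR$. By construction, for every admissible pair $(\gk_1, \gk_2)$ we have
\[f(\gk_1) + f(\gk_2) - f(\gk_1 \gk_2) = f(\partial(\gk_1, \gk_2)) = \tilde\alpha(\gk_1, \gk_2),\]
which is bounded in absolute value by $\|\alpha\|$. Thus $f$ is an $\bG$-quasimorphism with $D''(f) \le \|\alpha\|$ and $\Phi(f) = \alpha$, which finishes the proof; the reverse inequality $D''(f) = \|\Phi(f)\| = \|\alpha\|$ is automatic from the isometric property established above.
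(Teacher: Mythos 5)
Your proof is correct and follows essentially the same route as the paper: both use $f \mapsto f\circ\partial$ as the forward map, verify it is isometric with kernel $\HH^1_\bG$, and obtain surjectivity by lifting a bounded functional on $C_2'/Z_2' \cong B_1'$ to an arbitrary (possibly unbounded) linear functional on $C_1(\hG)$ and reading off an $\bG$-quasimorphism. The paper organizes this as two explicit mutually inverse maps while you argue injectivity plus surjectivity of one map, but the substance is identical.
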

\begin{proof}
Let $f \colon \hG \to \RR$ be an $\bG$-quasimorphism on $\hG$. Then $f$ is identified with an $\RR$-linear map $f \colon C_1 (\hG) \to \RR$. Since $f$ is an $\bG$-quasimorphism, we have
\[|f \circ \partial (\gk ,x)| = |f(x) - f(\gk x) + f(\gk )| \le D''(f),\]
and
\[|f \circ \partial (x,g)| = |f(\gk ) - f(xg) + f(x)| \le D''(f)\]
for every $g \in \hG$ and every $x \in \bG$. This means that $f \circ \partial \colon C_2' \to \RR$ is continuous with respect to the $\ell^1$-norm. Since $f \circ \partial$ vanishes on $Z_2'$, we have that $f \circ \partial$ induces a continuous linear map $\hat{f} \colon C_2' / Z_2' \to \RR$, whose operator norm is $D''(f)$. Thus we have constructed the correspondence from $\QQQ_\bG(\hG)$ to $(C_2' / Z_2')^{\ast}$.
The kernel of this correspondence is the space of $\bG$-homomorphisms. Thus we have constructed a norm preserving linear map $\QQQ_\bG / \HH^1_\bG \to (C_2' / Z_2')^{\ast}$.

Next, we construct the correspondence $(C_2' / Z_2')^{\ast} \to \QQQ_\bG / \HH^1_\bG$. Let $f \colon C_2' / Z_2' \to \RR$ be a bounded operator. Then $f$ is identified with a linear map $\overline{f} \colon B_1' \to \RR$. Since our coefficient is $\RR$, there exists a linear map $f' \colon C_1(\hG) \to \RR$ such that $f'|_{B_1'} = \overline{f}$, which is not necessarily continuous. The function $f' \colon C_1 (\hG) \to \RR$ is identified with a real-valued function $f' \colon \hG \to \RR$ on $\hG$. Let $D''$ be the operator norm of $f \colon C_2' \to \RR$. Then we have
\[| f'(\gk x) - f'(\gk ) - f'(x)| = | f'(\partial (\gk ,x))| = | f(\gk ,x)| \le D''.\]
Similarly, we can show $|f'(xg) - f'(x) - f'(\gk )| \le D''$. This implies that $f'$ is an $\bG$-quasimorphism.

To complete the construction of the correspondence $(C_2' / Z_2')^{\ast} \to \QQQ_\bG / \HH^1_\bG$, let $f''$ be another linear extension of $\overline{B}_1 \xrightarrow{\overline{f}} \RR$ and show that $f' - f''$ is an $\bG$-homomorphism. Since $f'$ and $f''$ coincide on $B_1'$, we have
\[f'(\gk x) -f'(\gk ) - f'(x) = f'(\gk x - g - x) = f''(\gk x - g - x) = f''(\gk x) - f''(\gk ) - f''(x).\]
This means that
\[(f' - f'')(\gk x) = (f' - f'')(\gk ) + (f' - f'')(x).\]
Similarly, we can show that
\[(f' - f'')(xg) = (f' - f'')(x) + (f' - f'')(\gk ),\]
and hence $f' - f''$ is an $\bG$-homomorphism. Thus we have completed the construction of the correspondence $(C_2' / Z_2')^{\ast} \to \QQQ_\bG / \HH^1_\bG$.
Since these correspondences are mutually inverses and the correspondence $\QQQ_\bG / \HH^1_\bG \to (C_2'/ Z_2')^{\ast}$ is an isometry, we complete the proof.
\end{proof}

\begin{cor}
The normed space $\QQQ_\bG / \HH^1_\bG$ is a Banach space.
\end{cor}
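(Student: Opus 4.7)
The plan is to deduce completeness directly from the isometric identification established in the preceding proposition. By that proposition, the normed space $\QQQ_\bG / \HH^1_\bG$ is isometrically isomorphic to $(C_2' / Z_2')^{\ast}$, the continuous dual of the normed space $C_2' / Z_2'$. Since isometric isomorphism preserves completeness, it suffices to verify that $(C_2' / Z_2')^{\ast}$ is a Banach space.

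For this, I would invoke the standard functional-analytic fact that the continuous dual $V^{\ast}$ of any normed space $V$ is complete with respect to the operator norm, regardless of whether $V$ itself is complete. The verification is routine: given a Cauchy sequence $(f_n)$ in $V^{\ast}$, one defines $f(v) := \lim_{n \to \infty} f_n(v)$ for each $v \in V$, which exists since $(f_n(v))$ is Cauchy in $\RR$ by the estimate $|f_n(v) - f_m(v)| \le \| f_n - f_m\| \cdot \| v\|$. Linearity of $f$ follows from linearity of each $f_n$ and continuity of the limit, boundedness follows from the uniform bound on $\| f_n\|$, and finally the convergence $\| f_n - f\| \to 0$ follows from a standard $\varepsilon/2$-argument using the Cauchy condition.

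I do not expect any real obstacle here, since this corollary is a formal consequence of the preceding proposition together with a textbook fact. The only cosmetic point is that the space $C_2' / Z_2'$ itself is generally not complete (indeed, it is a quotient of a free $\RR$-module with the $\ell^1$-norm, which is not even Banach before taking the quotient), so one must be careful to state that only the completeness of the dual is being used, not of $C_2' / Z_2'$.
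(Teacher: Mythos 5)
Your argument is correct and is precisely what the paper leaves implicit: the corollary is stated without proof immediately after the isometric isomorphism $\QQQ_\bG / \HH^1_\bG \cong (C_2'/Z_2')^{\ast}$, so completeness follows from the textbook fact that the continuous dual of any normed space is Banach. Your additional remark that $C_2'/Z_2'$ itself need not be complete is a sensible clarification but changes nothing.
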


Applying the Hahn--Banach theorem, we have the following corollary:

\begin{cor} \label{cor 5.8}
For $x \in [\hG,\bG]$, the following holds:
\[\| x\|' = \sup_{f \in \QQQ_\bG - \HH^1_\bG} \frac{|f(x)|}{D''(f)}.\]
\end{cor}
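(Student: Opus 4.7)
The plan is to apply the standard Hahn--Banach duality formula in the normed space $V = C_2'/Z_2'$, and then translate the result using the isometric isomorphism $\QQQ_\bG/\HH^1_\bG \cong (C_2'/Z_2')^{\ast}$ established in the preceding proposition.

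First I would note that Lemma \ref{lem 5.1} together with Lemma \ref{lem 5.2} guarantees $x \in B_1'$, so under the defining isomorphism $\partial \colon C_2'/Z_2' \xrightarrow{\cong} B_1'$ there is a unique class $\bar x \in C_2'/Z_2'$ with $\partial \bar x = x$ and $\|\bar x\| = \|x\|'$. In the degenerate case $\bar x = 0$ both sides of the claimed equality vanish, so we may assume $\bar x \neq 0$.

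Next I would invoke the Hahn--Banach consequence that for any nonzero vector $v$ of a normed space $V$,
\[
\|v\| \;=\; \sup_{\phi \in V^{\ast} \setminus \{0\}} \frac{|\phi(v)|}{\|\phi\|_{\mathrm{op}}},
\]
applied to $v = \bar x$. Under the isometric isomorphism of the previous proposition, a nonzero $\phi \in V^{\ast}$ corresponds to a class $[f]$ where $f \in \QQQ_\bG$ and $\|\phi\|_{\mathrm{op}} = D''(f)$; nonvanishing of $\phi$ is equivalent to $D''(f) > 0$, which in turn is equivalent to $f \notin \HH^1_\bG$ since $\HH^1_\bG$ is exactly the kernel of the seminorm $D''$. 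It remains only to verify the equality $\phi(\bar x) = f(x)$. This is immediate from the way $\phi = \hat f$ was constructed in the previous proposition: $\hat f$ is defined by $\hat f([c]) = f \circ \partial(c)$ for $c \in C_2'$, so choosing any $c$ with $\partial c = x$ gives $\hat f(\bar x) = f(x)$. Combining these identifications yields the claimed formula.

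I do not foresee a genuine obstacle here, as this is a direct corollary of the preceding proposition together with the dual-norm formula. The only point requiring care is the matching of ``nonzero'' on both sides, which I have spelled out above; everything else is a bookkeeping translation through the isometric isomorphism.
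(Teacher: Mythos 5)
Your proof is correct and follows exactly the route the paper intends: the paper's entire argument is the single sentence ``Applying the Hahn--Banach theorem,'' relying on the isometric isomorphism $\QQQ_\bG/\HH^1_\bG \cong (C_2'/Z_2')^{\ast}$ from the previous proposition, and you have spelled out precisely what that application consists of. One small simplification: the degenerate case $\bar x = 0$ never actually occurs, since $\partial$ is injective on $C_2'/Z_2'$ by construction and the chain $x$ is a basis vector of $C_1(\hG)$ (hence nonzero), so $\bar x \neq 0$ automatically; handling it does no harm but is unnecessary.
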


\section{Geometric interpretation of $\cl_{\hG,\bG}$}\label{section Geometric characterization}

Let $x$ be an element of $[\hG,\hG]$. An element of $\hG$ is identified with a homotopy class of loops in $B{\hG}$. Since $x$ is zero in the first integral homology group of $\hG$, there exists a compact orientable surface $S$ with connected boundary and a map $f \colon S \to B\hG$ which sends the boundary of $S$ to $x$. The commutator length is the minimum genus of such a surface $S$. In this section, we
establish an analog of the interpretation above in the setting of the $(\hG,\bG)$-commutator length; the precise statement goes as follows:

\begin{thm} \label{thm 6.1}
For $x \in [\hG,\bG]$, the equality $\fill_{\hG,\bG}(x) = 4 \cdot \scl_{\hG,\bG}(x)$ holds.
\end{thm}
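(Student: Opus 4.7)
The plan is to establish the two inequalities $\fill_{\hG,\bG}(x)\le 4\cdot\scl_{\hG,\bG}(x)$ and $4\cdot\scl_{\hG,\bG}(x)\le\fill_{\hG,\bG}(x)$ separately. The first is already Corollary~\ref{cor 3.5}, so the entire content lies in the reverse inequality. For it, the strategy is to convert an efficient filling of $x^n$ by a $2$-chain in $C_2'$ into a $(\hG,\bG)$-simplicial surface with boundary $x^n$ whose genus is bounded by the $\ell^1$-mass of the chain, and then invoke the geometric interpretation Theorem~\ref{main thm 2}.

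In the concrete execution, I would first fix $\epsilon>0$ and a positive integer $n$, pick a rational chain $c\in C_2'$ with $\partial c=x^n$ and $\ell^1$-mass at most $\|x^n\|'+\epsilon$, and clear denominators to obtain an integer chain $\tilde c\in C_2'$ with $\partial\tilde c=x^{nN}$ and total mass $T\le N(\|x^n\|'+\epsilon)$. I would then interpret each basis element $(g_{i,1},g_{i,2})$ of $\tilde c$ as an oriented $\hG$-labelled triangle of the type in Figure~\ref{fig 1}, producing $T$ labelled triangles in total; the assumption $\tilde c\in C_2'$ makes every such triangle automatically satisfy the $(\hG,\bG)$-condition, and the identity $\partial\tilde c=x^{nN}$ forces every edge label other than $x^{nN}$ to occur with matching positive and negative multiplicities. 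Hence oppositely oriented edges with identical labels can be glued into a $2$-dimensional CW complex carrying a $(\hG,\bG)$-labelling and having a single boundary edge labelled $x^{nN}$.

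The main obstacle I anticipate is that this naive gluing is generally not a manifold at its vertices: the link of a vertex may fail to be a single circle. I plan to resolve this with the standard vertex-splitting surgery familiar from Calegari's exposition of Bavard's theorem~\cite{Ca}, which preserves the triangle count, the $(\hG,\bG)$-labelling, and the boundary edge while producing an honest oriented surface of no larger genus; passing to the component containing the boundary then yields a connected $(\hG,\bG)$-simplicial surface $S$ with boundary $x^{nN}$. Writing $V$, $E$, $F$ for its face counts, one has $F\le T$, $3F=2E-1$ (a single boundary edge), and $V\ge 1$, so the Euler relation $V-E+F=1-2g$ rearranges to $4g=F-2V+3\le T+1$. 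Theorem~\ref{main thm 2} then gives
\[
\cl_{\hG,\bG}(x^{nN})\le g\le \frac{T+1}{4}\le\frac{N(\|x^n\|'+\epsilon)+1}{4},
\]
and dividing by $nN$ and sending in turn $N\to\infty$, $n\to\infty$, and $\epsilon\to 0^+$ yields $\scl_{\hG,\bG}(x)\le\fill_{\hG,\bG}(x)/4$, completing the proof.
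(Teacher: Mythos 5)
Your overall strategy matches the paper's: invoke Corollary~\ref{cor 3.5} for one direction, and for the reverse direction convert an efficient filling $2$-chain into a $(\hG,\bG)$-simplicial surface, bound its genus by the Euler-characteristic identity $4g = F - 2V + 3$, and apply Theorem~\ref{main thm 2}. You also explicitly handle the vertex-link singularities that can arise when gluing triangles along paired edges, a point the paper's Lemma~\ref{lem 6.2} passes over silently, so the geometric core of your argument is sound.

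There is, however, a concrete error in the clearing-denominators step. If $c$ is a rational chain with $\partial c = x^n$ and you scale to $\tilde c = Nc$, then $\partial\tilde c = N\cdot x^n$, that is, $N$ times the generator $x^n$ of $C_1$ --- \emph{not} the single generator $x^{nN}$. To produce a chain whose boundary is the group element $x^{nN}$ you must further subtract a telescoping chain $\sum_{i=1}^{N-1}(x^n, x^{ni})$, which does lie in $C_2'$ because $x^n \in [\hG,\bG]\subset\bG$; one checks directly that $\partial\bigl(Nc - \sum_{i=1}^{N-1}(x^n, x^{ni})\bigr) = x^{nN}$. This costs an extra $N-1$ in $\ell^1$-mass, so the honest bound is $T \le N(\|x^n\|'+\epsilon) + (N-1)$ rather than $T\le N(\|x^n\|'+\epsilon)$ as you wrote. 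Fortunately the defect is harmless for the conclusion: $4\cl_{\hG,\bG}(x^{nN}) \le T+1 \le N(\|x^n\|'+\epsilon+1)$, and dividing by $nN$ and sending $N\to\infty$, then $n\to\infty$, then $\epsilon\to 0$ still gives $4\scl_{\hG,\bG}(x)\le\fill_{\hG,\bG}(x)$. The paper deals with exactly this issue by explicitly subtracting the telescoping chain $\sum_{i=1}^{n-1}(x,x^i)$ and carrying the $n-1$ overhead through the estimates; your write-up should do the same rather than asserting the incorrect boundary and mass for $\tilde c$.
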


Recall that we have 
showed the inequality $\fill_{\hG,\bG}(x) \le 4 \cdot \scl_{\hG,\bG}(x)$ (Corollary \ref{cor 3.5}). Hence,
it only remains
to show $\fill_{\hG,\bG}(x) \ge 4 \cdot \scl_{\hG,\bG}(x)$.

\subsection{$(\hG,\bG)$-labellings of simplicial surfaces}

We introduce the concept of $(G,N)$-simplicial surfaces, which provides a geometric
interpretation of the $(G,N)$-commutator length $\cl_{G,N}$. We formulate this concept by using $\Delta$-complex. First, we recall some concepts related to $\Delta$-complex; see \cite{H}.

For a non-negative integer $n$, the \textit{standard $n$-simplex} is the subset
\[
\Delta^n = \Big\{ (t_0, \cdots, t_n) \in \RR^{n+1} \; | \; t_0\geq 0,\ t_1\geq 0,\ldots ,\ t_n\geq 0,\ \sum_{i=0}^n t_i = 1 \Big\}
\]
of $\RR^{n+1}$.
We define the \textit{boundary of $\Delta^n$} by
\[
\partial \Delta^n = \{ (t_0, \cdots, t_n) \in \Delta^n \; | \; \textrm{$t_i = 0$ for some $i$}\},
\]
and set $(\Delta^n)^\circ = \Delta^n - \partial \Delta^n$.

For $i \in \{ 0, 1, \cdots, n\}$, define a map $d_i \colon \Delta^n\to \Delta^{n+1}$ by
\[
d_i(t_0, \cdots, t_n) = (t_0, \cdots, t_{i-1}, 0, t_i , \cdots, t_n).
\]

Let $X$ be a topological space. The \textit{$\Delta$-complex structure on $X$} is the family $\SSS = \{ \SSS_n \}_{n \ge 0}$ which satisfies the following three conditions:
\begin{enumerate}[(1)]
\item $\SSS_n$ is a family of continuous maps $\sigma$ from $\Delta^n$ to $X$ such that $ \sigma |_{(\Delta^n)^\circ} \colon (\Delta^n)^\circ \to X$ is a homeomorphism into $X$. For every point $x$ in $X$, there are unique $n$ and unique $\sigma \in \SSS_n$ such that $x$ is contained in $\sigma((\Delta^n)^\circ)$.

\item Let $\sigma \in \SSS_n$ and let $i \in \{ 0,1 \cdots, n-1\}$. Then $\sigma \circ d_i$ is contained in $\SSS_{n-1}$. We write $\partial_i \sigma$ instead of $\sigma \circ d_i$.

\item $A \subset X$ is open if and only if $\sigma^{-1}(A)$ is open in $\Delta^n$ for every $n \ge 0$ and every $\sigma \in \SSS_n$.
\end{enumerate}

We use the term \textit{simplicial surface} to mean a pair $(S, \SSS)$ consisting of a compact connected orientable surface $S$ together with its $\Delta$-complex structure $\SSS$ on $S$.

For a group $G$, a \textit{$G$-labelling of a simplicial surface $(S,\SSS)$} is a function $f \colon \SSS_1 \to G$ satisfying $f(\partial_1 \sigma) = f(\partial_2 \sigma) \cdot f(\partial_0 \sigma)$ for every $\sigma \in \SSS_2$. A $(G,N)$-labelling is a $G$-labelling $f \colon \SSS_1 \to G$ such that for every $\sigma \in \SSS_2$ either $f(\partial_0 \sigma)$ or $f(\partial_2 \sigma)$ is contained in $N$.


\begin{dfn}[$(G,N)$-simplicial surface]
A \textit{$(G,N)$-simplicial surface} is defined to be a triple $(S, \SSS, f)$ such that $(S, \SSS)$ is a simplicial surface and $f$ is a $(G,N)$-labelling of $(S, \SSS)$.

For an element $x$ of $G$, a \textit{$(G,N)$-simplicial surface with boundary $x$} is a $(G,N)$-simplicial surface $(S, \SSS, f)$ such that $\partial S$ is a subcomplex of $(S, \SSS)$ consisting of one edge and one point, and $f$ sends the only one edge of $\partial S$ to $x$. Note that in this case $\partial S$ is homeomorphic to $S^1$.
\end{dfn}

We call an element $x \in B_1'$ an \textit{integral $(\hG,\bG)$-boundary} if there exists a chain $c \in C_2'$ with integral coefficients such that $\partial c = x$. For an integral $(\hG,\bG)$-boundary $x$, we write $\| x\|'_\ZZ$ to mean the infimum of $\| c\|_1$ such that $c \in C_2'$ is a chain with integral coefficients and $\partial c = x$. Here $\| c\|_1$ denotes the $\ell^1$-norm of $c$.

We call an element $x \in B_1'$ an \textit{integral $(\hG,\bG)$-boundary} if there exists a chain $c \in C_2'$ with integral coefficients such that $\partial c = x$. For an integral $(\hG,\bG)$-boundary $x$, we write $\| x\|'_\ZZ$ to mean the infimum of $\| c\|_1$ such that $c \in C_2'$ is integral and $\partial c = x$.

\begin{lem} \label{lem 6.2}
Let $x \in [\hG, \bG]$. Then there exists a $(\hG, \bG)$-simplicial surface $(S, \SSS, f)$ with boundary $x$ whose number of $2$-simplices of $(S,\SSS)$ coincides with $\| x\|'$.
\end{lem}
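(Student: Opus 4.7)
The plan is to choose an integer chain $c \in C_2'$ with $\partial c = x$ realising the $\ell^1$-filling norm $\|x\|'$, then glue abstract triangles (one per term of $c$, with orientation determined by the sign of its coefficient) along matching labelled edges to produce a $(\hG,\bG)$-simplicial surface whose $2$-simplex count equals $\|c\|_1 = \|x\|'$. This is a direct adaptation of the classical simplicial interpretation of commutator length (cf.\ \cite{Ca}) to the $C_2'$-constrained setting.

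First, I would show that the infimum defining $\|x\|'$ is attained by an integer chain of the form $c = \sum_{\sigma \in I} \epsilon_\sigma (a_\sigma, b_\sigma)$ with each $\epsilon_\sigma \in \{\pm 1\}$, distinct basis pairs $(a_\sigma, b_\sigma)$, and $\{a_\sigma, b_\sigma\} \cap \bG \neq \emptyset$. Since the polyhedron $\{c \in C_2' : \partial c = x\}$ is defined over $\ZZ$ and $\|\cdot\|_1$ is piecewise linear on it, the infimum is achieved at a rational vertex; rescaling and removing internal cancellations of the form $(u,v)$ versus $-(u,v)$ then yields an integral $\pm 1$-coefficient chain whose $\ell^1$-norm $|I|$ equals $\|x\|'$.

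Next, for each $\sigma \in I$ I would introduce an oriented $2$-simplex with edges labelled $\partial_0\sigma = a_\sigma$, $\partial_2\sigma = b_\sigma$, and $\partial_1\sigma = a_\sigma b_\sigma$ as in Figure \ref{fig 1}, reversing its orientation whenever $\epsilon_\sigma = -1$. The identity $\partial c = x$, combined with $\partial(g_1, g_2) = g_2 - g_1 g_2 + g_1$, says that among the signed labelled boundary edges of all these triangles every label other than $x$ occurs in matched positive/negative pairs. Gluing each such pair by an orientation-reversing homeomorphism produces a $\Delta$-complex $(S, \SSS)$ together with an edge map $f \colon \SSS_1 \to \hG$ satisfying $f(\partial_1\sigma) = f(\partial_0\sigma) \cdot f(\partial_2\sigma)$ by construction. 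The defining condition of $C_2'$ makes $f$ a $(\hG,\bG)$-labelling, the unique unglued edge forms the single boundary edge labelled $x$, and $|\SSS_2| = |I| = \|x\|'$.

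The complex built this way is only a pseudo-surface a priori; at a non-manifold vertex (one whose link is not a single arc or circle) I would apply the standard vertex-splitting surgery, separating the corners of triangles at that vertex according to the connected components of its link. This leaves the $2$-simplex count, the edge labelling, and the boundary edge unchanged, while converting $(S, \SSS)$ into a compact oriented surface. Connectedness follows from the minimality of $c$: any component disjoint from the $x$-boundary would correspond to a sub-chain in $Z_2'$, and removing this integer part would yield a strictly smaller integer filling of $x$, contradicting $\|c\|_1 = \|x\|'$. I expect the rationality/integrality argument in the first step to be the main obstacle, in particular verifying that the $\ell^1$-minimum over the submodule $C_2'$ (rather than all of $C_2$) is still attained at an integer chain; once this integer filling is in hand, the gluing and surgery in the remaining steps are essentially formal.
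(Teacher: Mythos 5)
The gap is in your first step, where you claim that the real filling norm $\|x\|'$ is realized by an integral $\pm 1$ chain. The rational-vertex argument cannot deliver this: $\{c \in C_2' : \partial c = x\}$ is an affine subspace of an $\ell^1$-space (infinite-dimensional whenever $\hG$ is infinite), so there is no compactness and no vertex structure to appeal to, the infimum need not be attained, and rescaling a rational minimizer by a common denominator $n$ changes the boundary to $nx$ rather than $x$. Pointwise equality $\|x\|' = \|x\|'_\ZZ$ is precisely what the paper does \emph{not} assert; it is recovered only asymptotically, in the lemma $\lim_{n\to\infty} \|nx\|'_\ZZ / n = \|x\|'$ that immediately precedes the proof of Theorem~\ref{thm 6.1}.

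The root cause is that the statement of Lemma~\ref{lem 6.2} almost certainly carries a typo and should read $\|x\|'_\ZZ$ in place of $\|x\|'$. The paper's proof opens by setting $m = \|x\|'_\ZZ$ and choosing an integral chain $c = \sum_{k=1}^m \varepsilon_k(x_k,y_k)$ attaining this value (which exists trivially, since $\|x\|'_\ZZ$ is an infimum over nonnegative integers), and the invocation of Lemma~\ref{lem 6.2} in the proof of Theorem~\ref{thm 6.1} --- where the surface is required to have at most $\|c'\|_1$ triangles for a given integral chain $c'$ --- also matches the $\|x\|'_\ZZ$ reading. Indeed, the statement with $\|x\|'$ could only hold if $\|x\|' = \|x\|'_\ZZ$, because every $(\hG,\bG)$-simplicial surface with boundary $x$ produces an integral chain in $C_2'$ with boundary $x$ whose $\ell^1$-norm is its $2$-simplex count. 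Once you replace $\|x\|'$ by $\|x\|'_\ZZ$ and take the minimal integral chain, the rest of your argument --- the gluing of signed triangles along paired labelled edges, the vertex-splitting to restore manifoldness, and the connectedness argument via minimality of the chain --- tracks the paper's proof closely.
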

\begin{proof}
By the proof of Lemma \ref{lem 5.1}, every $(\hG, \bG)$-commutator is an integral $(\hG, \bG)$-boundary.
Set $m = \| x\|'_\ZZ$. Let $c \in C_2'$ be an integral $2$-chain
\[
c = \sum_{k = 1}^m \varepsilon_k (x_k, y_k), \; \varepsilon_k = \pm 1
\]
such that either $x_k$ or $y_k$ belongs to $N$ and $x = \partial c$. Since $\| c\|_1 = \| x\|'_\ZZ$, we have that $(x_k, y_k) = (x_l, y_l)$ implies $\varepsilon_k = \varepsilon_l$.
For each $i \in \{ 1, \cdots, m\}$, we write $\Delta_i^2$ to mean a copy of the standard $2$-simplex $\Delta^2$.
Define a surface $S$ as a quotient of the direct sum $\coprod_k \Delta^2_k$ as follows.

Let $z \in G$. Define sets $A_z$ and $B_z$ by
\[
A_z = \{ (i,k) \in \{ 0,1,2\} \times \{ 1, \cdots, m\} \; | \; \partial_i \varepsilon_k (x_k, y_k) = z\},
\]
\[B_z = \{ (i,k) \in \{ 0,1,2\} \times \{ 1, \cdots, m\} \; | \; \partial_i \varepsilon_k (x_k, y_k) = -z \}. \]
Suppose that $z \ne x$. Since $\partial c = x$, we have that $\# B_z = \# A_z$. Let $\varphi_z \colon B_z \to A_z$ be a bijection. If $z = x$, then $\partial c = x$ implies $\# B_z + 1 = \# A_z$. In this case, let $\varphi_x \colon B_x \to A_x$ be an injection. Then the identification of $\coprod_k \Delta^2_k$ is defined as follows: suppose that $(i,k) \in B_z$ and $(j,l) = \varphi_z(i,k)$.
Then we identify the edge $\partial_i \Delta^2_k$ of $\Delta^2_k$ and the edge $\partial_j \Delta^2_l$ of $\Delta^2_l$. Let $S$ denote the quotient space of $\coprod_k \Delta^2_k$ with respect to this identification. Then $S$ is a surface with boundary homeomorphic to $S^1$. We show that $S$ has a natural $\Delta$-complex structure and a $(G,N)$-coloring, and is orientable and connected.

An $n$-simplex of $S$ is a composite of an $n$-simplex $\Delta^n \to \coprod_k \Delta^2_k$ followed by the quotient map $\coprod_k \Delta^2_k \to S$. This defines a $\Delta$-complex structure $\SSS$ on $S$.

The $(G,N)$-labelling of $(S, \SSS)$ is defined as follows: let $e \colon \Delta^1 \to S$ be a $1$-simplex of $(S,\SSS)$. Then there exist $i \in \{ 0,1,2\}$ and $\xi \in \{ 1, \cdots, m\}$ such that the composite
\[
\Delta^1 \xrightarrow{d_i} \Delta^2_\xi \hookrightarrow \coprod_k \Delta^2_k \to S\]
coincides with $e$, and define $f(e)$ by $\partial_i (x_{\xi}, y_{\xi})$.
This function $f \colon \SSS_1 \to G$ is well-defined by the definition of the identification. To see that this function is a $(G,N)$-labelling, let $\sigma \in \SSS_2$. Then there exists a unique $\xi \in \{ 1, \cdots, m\}$ such that the composite
\[
\Delta^2_\xi \hookrightarrow \coprod_k \Delta^2_k \to S
\]
coincides with $\sigma$. By the definition of $f$, we have that
\[
f(\partial_0 \sigma) = \partial_0 (x_{\xi}, y_{\xi}) = y_{\xi}, f(\partial_1 \sigma) = \partial_1(x_{\xi}, y_{\xi}) = x_{\xi} y_{\xi}, f(\partial_2 \sigma) = \partial_2 (x_{\xi}, y_{\xi}) = x_{\xi}.
\]
Therefore we have that $f(\partial_1 \sigma) = f(\partial_2 \sigma) \cdot f(\partial_0 \sigma)$. Since one of $x_{\xi}$ and $y_{\xi}$ is contained in $N$, we have that $f$ is a $(G,N)$-labelling.

Now we prove that $S$ is orientable. For each $\xi \in \{ 1, \cdots, m\}$, define $\sigma_\xi$ to be the composite of
\[
\Delta^2_\xi \hookrightarrow \coprod_k \Delta^2_k \to S.
\]
Then
\[
\sum_k \varepsilon_k \sigma_k
\]
is an orientation class of $(S,\partial S)$.

Finally, we show that $S$ is connected.
Suppose that $S$ is not connected.
Then there exists a connected component $S_1$ of $S$ such that $\partial S_1 = \emptyset$.
Set
\[
A = \{ i \in \{ 1, \cdots, m\} \; | \; \textrm{$\sigma_i$ is a simplex of $S_1$}\},
\]
and $B = \{ 1, \cdots, m\} - A$. Here $\sigma_{i}$ is the $2$-simplex defined in the previous paragraph. Then we have
\[
\partial \Big( \sum_{k \in A} \varepsilon_k(x_k, y_k) \Big) = 0,
\]
and hence
\[
x = \partial c = \partial \Big( \sum_k \varepsilon_k (x_k, y_k) \Big) = \partial \Big(\sum_{k \in B} \varepsilon_k (x_k, y_k) \Big).
\]
Since $\# B < m$, we have that $\| x\|'_\ZZ \le \# B < m  = \| x\|'_\ZZ$. This is a contradiction.
%
%
%
%
%
\end{proof}

Conversely, we will show that if there exists a $(\hG,\bG)$-simplicial surface with boundary $x$ then $x$ is contained in $[\hG,\bG]$ (see Proposition \ref{prop 6.6}).

We are now ready to state the geometric 
interpretation of $(\hG,\bG)$-commutator lengths.

\begin{thm}\label{thm 6.2}
Let $x$ be an element of $[\hG,\bG]$.
Then the $(\hG,\bG)$-commutator length ${\cl}_{\hG,\bG}(x)$ of $x$ 
equals the minimum of the genus of a $(\hG,\bG)$-simplicial surface with boundary $x$.
\end{thm}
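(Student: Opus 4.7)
The plan is to establish both inequalities underlying the equality:
\[
\text{(minimum genus)} \le \cl_{\hG,\bG}(x), \quad \cl_{\hG,\bG}(x) \le \text{(minimum genus)},
\]
where ``minimum genus'' refers to the minimum genus of a connected $(\hG,\bG)$-simplicial surface with boundary $x$.

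For the inequality $\text{(minimum genus)} \le \cl_{\hG,\bG}(x)$, I would proceed by explicit construction. Set $n := \cl_{\hG,\bG}(x)$ and write $x = c_1 \cdots c_n$ with $c_i = [g_i, h_i]$, $g_i \in \hG$, $h_i \in \bG$. For each $i$, Lemma~\ref{lem 5.1} exhibits $c_i$ as the boundary of the integral $2$-chain $(c_i, h_i g_i) - (g_i, h_i) + (h_i, g_i) \in C_2'$ of $\ell^1$-norm $3$, to which Lemma~\ref{lem 6.2} associates a connected $(\hG,\bG)$-simplicial surface $T_i$ with boundary $c_i$ and three $2$-simplices. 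An Euler-characteristic count forces $T_i$ to be a $1$-holed torus. I would then glue the $T_i$'s iteratively along triangulated pairs-of-pants carrying consistent $(\hG,\bG)$-labels, producing a connected genus-$n$ $(\hG,\bG)$-simplicial surface with boundary $x$.

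For the reverse inequality, let $(S, \SSS, f)$ be a connected $(\hG,\bG)$-simplicial surface of genus $g$ with boundary $x$. The $\hG$-labelling extends to a continuous map $S \to B\hG$, inducing $\phi \colon \pi_1(S) \to \hG$ with $\phi(\partial S) = x$. Since $\pi_1(S)$ is free of rank $2g$ with a standard basis $\{a_i, b_i\}_{i=1}^g$ and $\partial S = \prod_{i=1}^g [a_i, b_i]$, one obtains $x = \prod_{i=1}^g [\phi(a_i), \phi(b_i)]$ in $\hG$; the crux is to arrange each factor to be a $(\hG,\bG)$-commutator. I would induct on $g$: for the step, locate a non-separating simple closed curve $\gamma \subset S$ with $\phi(\gamma) \in \bG$. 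Combined with a transverse curve, $\gamma$ splits off one handle contributing a single $(\hG,\bG)$-commutator, and the complement, capped off along the cut, is a genus-$(g-1)$ $(\hG,\bG)$-simplicial surface to which the inductive hypothesis applies.

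The main obstacle is the existence of such a $\gamma$: the $(\hG,\bG)$-condition only guarantees $\bG$-labels on individual edges, and the subgraph of $\bG$-labelled edges need not contain a non-separating cycle, especially when $\hG/\bG$ has abelian rank at least $2g$, making $H_1(S) \to (\hG/\bG)^{\mathrm{ab}}$ potentially injective. Overcoming this requires a combinatorial lemma, likely involving a preprocessing step that subdivides $\SSS$ in a $(\hG,\bG)$-preserving fashion to produce enough $\bG$-labelled loops, or alternatively a reduction of $\SSS$ to a canonical form (a triangulated $4g$-gon presentation) via $(\hG,\bG)$-preserving elementary moves, from which the decomposition into $g$ $(\hG,\bG)$-commutators may be read off directly.
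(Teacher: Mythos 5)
Your overall two-inequality structure matches the paper's (Propositions 6.5 and 6.6), and you even correctly pinpoint the essential difficulty. But the proposal has a real gap precisely at that difficulty, and the first direction is also more complicated than it needs to be.

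For the direction ``minimum genus $\le \cl_{\hG,\bG}(x)$'': your plan of building one $3$-triangle genus-$1$ surface $T_i$ per commutator $c_i$ and then gluing along pairs-of-pants is workable but indirect, and you would have to verify that the $\bG$-label condition survives on the new pair-of-pants triangles. The paper avoids all of this: it triangulates the standard $(4m{+}1)$-gon directly, putting three triangles over each $\xleftarrow{g_i}\xleftarrow{h_i}\xrightarrow{g_i}\xrightarrow{h_i}$ block and then triangulating the central polygon. The single observation that makes the central diagonals admissible is that each is labelled by a partial product $c_1\cdots c_j$, which lies in $[\hG,\bG]\subset \bG$, so the whole $\hG$-labelling is automatically a $(\hG,\bG)$-labelling. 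You should not need to appeal to Lemma~\ref{lem 6.2} at all.

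For the direction $\cl_{\hG,\bG}(x)\le$ minimum genus, you have correctly identified the obstacle: the subgraph of $\bG$-labelled edges need not contain a non-separating cycle. But you leave this unresolved, proposing only vague alternatives (a subdivision scheme, or reduction to a canonical $4g$-gon via moves), neither of which is fleshed out and neither of which is the paper's solution. The paper's key idea, which your proposal is missing, is to give up looking for $\bG$-labelled loops in the $1$-skeleton and instead build a transverse $1$-complex $C$: in each triangle $\sigma$ that is not entirely $\bG$-labelled, the $(\hG,\bG)$-condition forces exactly two of the three edges to have non-$\bG$ labels; join the midpoints of those two edges by a segment, and let $C$ be the union of all such segments. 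One then checks three properties: (A) every component of $C$ is a circle; (B) each component of $C$ can be homotoped into the $\bG$-labelled subcomplex $S'$; (C) $S'$ is a deformation retract of $S - C$. From here one extracts $n$ non-separating simple closed curves — some from $C$, the rest from $S-C$ — each of which, by (B) or (C), maps into $B\bG$ after a based homotopy of the whole surface (Hatcher's Proposition~0.16), so the corresponding $h_i = \phi(\gamma_i)$ lie in $\bG$ and $x=[g_1,h_1]\cdots[g_n,h_n]$ is a product of $n$ mixed commutators. Your inductive ``split off a handle and cap off'' scheme also quietly assumes the cut curve is simplicial; the paper's use of continuous maps to $B\hG$ and based homotopies is what sidesteps that issue. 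Without the construction of $C$ and properties (A)--(C), the argument does not go through.
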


\begin{rem} \label{rem geometric interpretation of cl_G}
Theorem~\ref{thm 6.2} is a generalization of a usual geometric interpretation of the commutator length $\cl_G$ \cite{Ca}.
Indeed, if $(S, \SSS, f)$ is a $(G,G)$-simplicial surface with boundary $x$, then there exists a continuous map $f' \colon S \to BG$ defined as follows. For each $0$-simplex $v$ of $S$, define $f(v)$ to be the basepoint of $BG$. For each $1$-simplex $e$ of $S$, define $f'$ on $e$ to be a loop in $BG$ whose homotopy class is $f(e) \in G$.
Since every $\sigma \in \SSS_2$ satisfies $f(\partial_1 \sigma) = f(\partial_2 \sigma) \cdot f(\partial_0 \sigma)$, we have that $f'$ extends to $\sigma$, and hence to the whole surface $S$.
Conversely, suppose that $S$ is a compact connected orientable surface with $\partial S = S^1$ and $f' \colon S \to BG$ is a continuous map such that $f'|_{\partial S}$ is a loop whose homotopy class is $x$.
Let $\SSS$ be a $\Delta$-complex structure such that $\partial S$ is a subcomplex of $(S,\SSS)$ having only one $1$-simplex.
 By Proposition 0.16 of \cite{H}, $f'$ is homotopic to a map $f''$ such that $f''$ sends every $0$-simplex of $S$ to the basepoint of $BG$. For each $e \in \SSS_1$, define $f(e)$ to be the homotopy class of the loop $f''|_e$ in $BG$.
Since $f''(\partial_2 \sigma) \cdot f''(\partial_0 \sigma)$ and $f''(\partial_1 \sigma)$ is homotopic for each $\sigma \in \SSS_2$, we have that $f(\partial_1 \sigma) = f(\partial_2 \sigma) \cdot f(\partial_0 \sigma)$, and hence $f$ is a $G$-labelling.
In summary, there exists $G$-surface of genus $g$ with boundary $x$ if and only if there exists a continuous map $f' \colon S \to x$ of genus $g$ with boundary $S^1$ such that $x$ is the homotopy class of $f' |_{\partial S}$.
Therefore,  Theorem \ref{thm 6.2} for the special case $\bG=\hG$ provides the geometric interpretation of the usual commutator length $\cl_G$.
\end{rem}

The proof of Theorem \ref{thm 6.2} is postponed to the next subsection. In the rest of this subsection, we deduce Theorem \ref{thm 6.1} from Theorem \ref{thm 6.2}.


The following lemma can be shown in a standard way; we omit the proof of it.

\begin{lem}
For $x \in [\hG,\bG]$,
\[\lim_{n \to \infty}  \frac{\| nx\|'_\ZZ}{n} = \| x\|'.\]
\end{lem}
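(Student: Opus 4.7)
The plan is to establish the equality by proving two opposite inequalities. The easier direction $\|x\|' \le \lim_{n\to\infty} \|nx\|'_\ZZ/n$ is immediate by scaling: given any integral chain $c \in C_2'$ with $\partial c = nx$, the rescaled real chain $\tfrac{1}{n} c$ satisfies $\partial(\tfrac{1}{n} c) = x$ with $\ell^1$-norm $\|c\|_1/n$, so $\|x\|' \le \|nx\|'_\ZZ/n$ for every $n$, and hence $\|x\|' \le \liminf_n \|nx\|'_\ZZ/n$.

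For the reverse inequality, I would first observe that $n \mapsto \|nx\|'_\ZZ$ is subadditive, because the sum of integral fillings of $mx$ and $nx$ is an integral filling of $(m+n)x$. Combined with $\|x\|'_\ZZ < \infty$ for $x \in [\hG, \bG]$ (which follows from Lemma \ref{lem 5.1} together with the fact that $x$ is a product of finitely many $(\hG,\bG)$-commutators, whose fillings can be concatenated), Fekete's lemma gives existence of $\lim_n \|nx\|'_\ZZ/n = \inf_n \|nx\|'_\ZZ/n$. Hence it suffices to produce, for each $\epsilon > 0$, a positive integer $N$ with $\|Nx\|'_\ZZ/N \le \|x\|' + \epsilon$.

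The heart of the proof is a rational approximation step. Fix $\epsilon > 0$ and choose a real filling $c = \sum_{i=1}^k a_i \sigma_i \in C_2'$ of $x$ with $\|c\|_1 \le \|x\|' + \epsilon/2$, where $\sigma_1, \ldots, \sigma_k$ are finitely many distinct generators of $C_2'$. The condition $\partial(\sum b_i \sigma_i) = x$ on coefficient vectors $(b_i) \in \RR^k$ is a linear system with integer coefficient matrix (since $\partial$ acts integrally on basis elements and $x$ is an integral element of $C_1$), and thus defines an affine subspace of $\RR^k$ cut out over $\QQ$. Since it contains the real point $(a_i)$, its $\QQ$-points form a nonempty subset that is dense in its $\RR$-points by elementary linear algebra. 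Therefore $(a_i)$ admits a rational perturbation $(b_i)$ with $\partial(\sum b_i \sigma_i) = x$ and $\sum |b_i| \le \|x\|' + \epsilon$. Clearing a common denominator $N$ yields an integral chain $N \sum b_i \sigma_i$ of boundary $Nx$ and $\ell^1$-norm at most $N(\|x\|' + \epsilon)$, giving $\|Nx\|'_\ZZ/N \le \|x\|' + \epsilon$ as required.

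The only subtle point is the density claim used above, but it is the standard fact that a rational affine subspace of $\RR^k$ possessing a real point has $\QQ$-points dense in its $\RR$-points, which follows at once from Gaussian elimination over $\QQ$. Everything else in the argument is bookkeeping.
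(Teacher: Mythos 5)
Your proof is correct. The paper itself omits the argument, remarking only that ``the following lemma can be shown in a standard way,'' so there is no textual proof to compare against; the argument you give --- the easy direction by rescaling an integral chain, then Fekete to reduce the hard direction to exhibiting a single good $N$, then observing that the fillings of $x$ in $C_2'$ form a rational affine subspace whose rational points are dense, so a near-optimal real filling can be perturbed to a rational one and scaled to an integral one --- is precisely the standard argument the authors have in mind (it is the same one used, e.g., in Calegari's treatment of the Gersten filling norm versus its integral analogue). Two small points worth making explicit when writing it up: the concatenation step uses that $c_x + c_y - (x,y)$ is an integral filling of $xy$ when $c_x,c_y$ are integral fillings of $x,y$ (with $(x,y)\in C_2'$ since $x,y\in \bG$), which gives the finiteness $\|x\|'_\ZZ<\infty$ and exact subadditivity $\|(m+n)x\|'_\ZZ\le\|mx\|'_\ZZ+\|nx\|'_\ZZ$; and the density claim is cleanly justified by choosing a rational particular solution $a_0$ of the integer linear system together with a rational basis $v_1,\dots,v_s$ of its kernel, then approximating the real coordinates of $(a_i)-a_0$ in this basis by rationals.
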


We are now ready to prove Theorem \ref{thm 6.1}.

\begin{proof}[Proof of Theorem~$\ref{thm 6.1}$]
Let $n$ be a positive integer. Since $x \in [\hG,\bG]$, $n x$ is an integral $(\hG,\bG)$-boundary and hence there exists $c \in C_2'$ with integral coefficient such that $n x = \partial c$ and $\| nx \|'_\ZZ = \| c\|_1$. Define a chain $c' \in C'_2$ by
\[c' = c - \sum_{i=1}^{n-1} (x, x^i).\]
Then we have $\| c'\|_1 \le \| nx \|'_\ZZ + n -1$ and $\partial c' = x^n$. Thus it follows from Lemma \ref{lem 6.2} that there exists a $(\hG,\bG)$-simplicial surface $(S,\SSS, f)$ with boundary $x^n$ such that the number of $2$-simplices of $(S,\SSS)$ is at most $\| c'\|_1$.
Let $g$ be the genus of $S$, and let $p,e,s$ be the numbers of $0$-simplices, $1$-simplices, and $2$-simplices of $(S,\SSS)$.
Then the following hold:

\begin{enumerate}[(1)]
\item By observing the Euler characteristic of $S$, we have
\[s - e + p = 1 - 2g.\]

\item Every $1$-cell not contained in the boundary of $S$ appears twice as faces of $2$-cells. Since the boundary of $S$ has one $1$-cell, we have
\[1 + 2(e - 1) = 3s.\]
\end{enumerate}

Thus we have
\[ \| c'\|'_1 \ge s = 4g + 2p - 3 \ge 4g - 1 \ge 4 \cdot \cl_{\hG, \bG}(x^n) - 1.\]
Here we use Theorem \ref{thm 6.2} to deduce the last inequality. Since $\| nx\|_\ZZ' + n - 1 \ge \| c'\|'_1$, we have
\[ \| nx\|'_\ZZ \ge 4 \cdot \cl_{\hG, \bG}(x^n) - n .\]
By multiplying $n^{-1}$ and taking their limits, we have
\[\| x\|' \ge 4 \cdot \scl_{\hG,\bG}(x) - 1\]
for every $x \in [\hG,\bG]$. Thus we have
\[\| x^m\|' \ge 4 \cdot \scl_{\hG,\bG}(x^m) - 1 = 4 m \cdot \scl_{\hG,\bG}(x) - 1.\]
Multiplying $m^{-1}$ and taking the limits, we have
\[\fill_{\hG,\bG}(x) \ge 4 \cdot \scl_{\hG,\bG}(x).\]
This completes the proof of Theorem \ref{thm 6.1}.
\end{proof}

\subsection{Proof of Theorem \ref{thm 6.2}}

We start the proof of Theorem \ref{thm 6.2}. Theorem \ref{thm 6.2} is deduced from Propositions \ref{prop 6.5} and  \ref{prop 6.6}. Throughout this section, we write
$g_{\textrm{min}}$ to indicate the minimum of the genus of a $(\hG,\bG)$-simplicial surface with boundary $x$. The following proposition implies the inequality $g_{\textrm{min}} \le \cl_{\hG,\bG}(x)$.

\begin{prop} \label{prop 6.5}
Let $x \in [\hG,\bG]$ and set $m = \cl_{\hG,\bG}(x)$. Then there exists a connected $(\hG,\bG)$-simplicial surface with boundary $x$ whose genus is $m$.
\end{prop}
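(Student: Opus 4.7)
The plan is to write $x = c_1 \cdots c_m$ with each $c_i = [g_i, h_i]$ ($g_i \in \hG$, $h_i \in \bG$), and then to realize both the individual commutators and the splicing between them as a single chain in $C_2'$ to which the triangle-gluing construction from the proof of Lemma~\ref{lem 6.2} can be applied. For each $c_i$, reuse the three-simplex chain from the proof of Lemma~\ref{lem 5.1}: set
\[c^{(i)} := ([g_i, h_i], h_i g_i) - (g_i, h_i) + (h_i, g_i) \in C_2', \qquad \partial c^{(i)} = c_i,\]
with each summand in $C_2'$ because $[g_i, h_i]$ and $h_i$ lie in $\bG$. For the splicing, use the telescoping identity
\[\sum_{i=1}^{m-1} \partial\bigl(c_i,\, c_{i+1}\cdots c_m\bigr) \;=\; \Bigl(\sum_{i=1}^m c_i\Bigr) - x,\]
whose summands $(c_i, c_{i+1}\cdots c_m)$ also lie in $C_2'$ since $c_i \in [\hG,\bG]\subseteq \bG$. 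Setting $C := \sum_{i=1}^m c^{(i)} - \sum_{i=1}^{m-1}(c_i,\, c_{i+1}\cdots c_m)$ then gives $C \in C_2'$ with $\partial C = x$ and $\|C\|_1 = 3m + (m-1) = 4m-1$.

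Applying the construction from the proof of Lemma~\ref{lem 6.2} to $C$ produces a $(\hG,\bG)$-simplicial surface $S$ with $4m-1$ triangles and boundary labelled $x$. The Euler-characteristic bookkeeping from the proof of Theorem~\ref{thm 6.1} (where $3s = 1 + 2(e-1)$) forces $e = 6m-1$, so
\[\chi(S) \;=\; p - (6m-1) + (4m-1) \;=\; p - 2m,\]
where $p$ is the number of vertices of $S$ after identification. For a connected genus-$g$ surface with one boundary component, $\chi = 1 - 2g$; hence showing that $p = 1$ and that $S$ is connected will give $g = m$, as required.

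The heart of the argument is then the vertex bookkeeping. Each stand-alone piece $c^{(i)}$ collapses its nine corners to a single vertex $V_i$, exactly as in the $m = 1$ case. Each bridge simplex $(c_i, c_{i+1}\cdots c_m)$ contributes three corners $U_i^0, U_i^1, U_i^2$ joined by edges labelled $c_i$, $c_{i+1}\cdots c_m$, and $c_i c_{i+1}\cdots c_m$. The cancellations in $\partial C$ enforce
\[V_i = U_i^0 = U_i^1 \ (1 \le i \le m-1), \qquad V_m = U_{m-1}^1 = U_{m-1}^2,\]
\[U_j^0 = U_{j-1}^1, \quad U_j^2 = U_{j-1}^2 \ (2 \le j \le m-1);\]
telescoping these identifications collapses every $V_i$ and every $U_i^k$ to one common vertex, so $p = 1$ and $S$ is connected.

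The hard part will be keeping the orientations of every boundary edge straight so that the identifications really do compose as claimed, and verifying that the resulting $2$-complex is genuinely a surface with boundary—that is, that the link of the unique vertex is a single arc—rather than a pseudomanifold with a non-disk neighbourhood at that vertex. Once these are in hand, $\chi(S) = 1 - 2m$ together with connectedness yields the desired genus $g = m$.
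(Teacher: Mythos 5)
Your algebra is sound: the chain $C = \sum_i c^{(i)} - \sum_{i<m}(c_i,\,c_{i+1}\cdots c_m)$ does lie in $C_2'$, has $\partial C = x$, and has $\ell^1$-norm $4m-1$. But the route from $C$ to the desired surface has a genuine gap, one you name yourself in the final paragraph without closing it. The gluing construction in the proof of Lemma~\ref{lem 6.2} does \emph{not} by itself produce a connected, one-vertex surface; the connectedness argument given there relies crucially on the minimality $\|c\|_1 = \|x\|'_\ZZ$ (a closed component would contradict minimality). Your $C$ is not minimal in general, so a direct application of that construction could a priori leave a closed component floating off, which would wreck the count $\chi(S) = p - (6m-1) + (4m-1)$. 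You therefore cannot ``apply Lemma~\ref{lem 6.2}'' and get connectedness for free; it must be re-argued from the specific combinatorics of $C$. Likewise, the vertex identifications you list ($V_i = U_i^0 = U_i^1$, etc.) are asserted but not derived from the edge-pairing convention (matching $(-1)^i\varepsilon_k$ signs and the parametrizations $d_i$), and the gluing is only canonical when the labels $h_ig_i,\ g_ih_i,\ h_i,\ g_i,\ c_i,\ c_i\cdots c_m,\ldots$ are pairwise distinct — when they coincide, the bijections $\varphi_z$ involve choices and a wrong choice can change the genus or even disconnect the complex. Finally, the worry about the link of the unique vertex being a single arc rather than a wedge is exactly the kind of pseudomanifold issue that must be checked; you flag it but leave it open.

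The paper avoids all of this by working directly with the surface: it takes the $(4m+1)$-gon with boundary word spelling out $[g_1,h_1]\cdots[g_m,h_m]\,x^{-1}$ (Figure~\ref{fig 4}), which \emph{by the standard polygon-gluing picture} is a connected genus-$m$ surface with one boundary edge and one vertex, and then fills it with $4m-1$ labelled triangles (Figure~\ref{fig 5}); the $(\hG,\bG)$-condition holds because $[\hG,\bG]\subset\bG$. This makes the connectedness, the single vertex, the manifold structure, and the genus all visible at once, with no bookkeeping of $\ell^1$-minimality or bijection choices. If you want to salvage your algebraic route, you would need to (i) prove connectedness directly (e.g.\ by exhibiting the explicit chain of edge-gluings joining all $4m-1$ triangles), (ii) justify each vertex identification from the sign and parametrization conventions, (iii) handle or perturb away label collisions, and (iv) verify the vertex link is a single arc — at which point you will essentially have reproduced the paper's polygon picture by hand.
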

\begin{proof}
Let $x \in [\hG,\bG]$. Let $c_1, \cdots, c_m$ be $(\hG,\bG)$-commutators satisfying $x = c_1 \cdots c_m$. Our goal is to show that there exists a $(\hG,\bG)$-simplicial surface with boundary $x$ whose genus is $m$. Thus it suffices to find a $(\hG,\bG)$-triangulation of the $(4m+1)$-gon depicted in Figure \ref{fig 4}.
To construct this, we first embed three $2$-simplices to the part $\xrightarrow{h_i} \xrightarrow{g_i} \xleftarrow{h_i} \xleftarrow{g_i}$ as is depicted in the left of Figure \ref{fig 5}; after that, we embed $m-2$ triangles as is depicted in the right of Figure \ref{fig 5}.
Since $[\hG,\bG] \subset \bG$, we have that this $G$-labelling is a $(G,N)$-labelling. This completes the proof.
\end{proof}

\begin{figure}[t]
\begin{picture}(120,130)(0,0)
\put(80,0){\vector(-1,0){40}}

\put(40,0){\vector(-1,1){25}}
\put(15,25){\vector(-1,3){10}}
\put(25,85){\vector(-2,-3){20}}
\put(50,98){\vector(-2,-1){25}}

\put(70,98){\vector(2,-1){25}}
\put(95,85){\vector(2,-3){20}}
\put(105,25){\vector(1,3){10}}
\put(80,0){\vector(1,1){25}}

\put(60,-10){\footnotesize $x$}
\put(9,7){\footnotesize $h_m$}
\put(-7,36){\footnotesize $g_m$}
\put(0,73){\footnotesize $h_m$}
\put(28,96){\footnotesize $g_m$}

\put(83,96){\footnotesize $h_1$}
\put(107,73){\footnotesize $g_1$}
\put(113,35){\footnotesize $h_1$}
\put(97,7){\footnotesize $g_1$}

\put(55,98){\footnotesize $\cdots$}

\end{picture}
\caption{} \label{fig 4}
\end{figure}

On the other hand, the inequality $g_{\textrm{min}} \ge \cl_{\hG,\bG}(x)$ follows from the following proposition:

\begin{prop} \label{prop 6.6}
Let $x \in \hG$ and let $n$ be a positive integer. Assume that there exists a $(\hG,\bG)$-simplicial surface with boundary $x$ whose genus is $n$. Then there exist $n$ $(\hG,\bG)$-commutators $c_1$, $\cdots$, $c_n$ such that $x = c_1 \cdots c_n$. In particular, if there exists a $(\hG,\bG)$-simplicial surface with boundary $x$, then $x$ is contained in $[\hG,\bG]$.
\end{prop}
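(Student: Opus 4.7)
The plan is to proceed by induction on the genus $n$ of the $(\hG, \bG)$-simplicial surface $(S, \SSS, f)$.

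For the base case $n = 0$, the surface $S$ is a disk, so the task reduces to showing $x = e_{\hG}$, which is trivially a product of zero $(\hG,\bG)$-commutators. A clean argument uses Remark \ref{rem geometric interpretation of cl_G}: the $\hG$-labelling $f$ induces a continuous map $f' \colon S \to B\hG$ whose restriction to $\partial S$ represents the class $x \in \pi_1(B\hG) = \hG$. Since $S$ is simply connected, $f'|_{\partial S}$ is null-homotopic, forcing $x = e_{\hG}$. An alternative combinatorial route is a secondary induction on the number of $2$-simplices: take a $2$-simplex $\sigma$ having an edge on $\partial S$, apply the cocycle relation $f(\partial_1 \sigma) = f(\partial_0 \sigma) \cdot f(\partial_2 \sigma)$ to update the boundary word upon removing $\sigma$, and reduce to a $(\hG,\bG)$-simplicial disk with strictly fewer $2$-simplices.

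For the inductive step $n \geq 1$, I would peel off one handle of $(S, \SSS, f)$, reducing the genus by one and extracting one $(\hG,\bG)$-commutator factor of $x$. Concretely, I would exhibit a pair of transversally intersecting simple closed curves on $S$ that carve out a non-separating handle; cutting along them would express $x$ as $c \cdot x'$, where $c = [\hg, \bg]$ is the commutator of the $\hG$-labels of the two curves and $x'$ bounds a $(\hG,\bG)$-simplicial surface of genus $n-1$. Applying the inductive hypothesis to $x'$ would then write $x = c \cdot c_1 \cdots c_{n-1}$ as a product of $n$ $(\hG,\bG)$-commutators. The last assertion of the proposition---that any $(\hG,\bG)$-simplicial surface with boundary $x$ forces $x \in [\hG,\bG]$---follows immediately.

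The hard part will be ensuring that each commutator $c = [\hg, \bg]$ produced this way is genuinely a $(\hG,\bG)$-commutator, i.e., that $\bg \in \bG$: for a general $\hG$-labelling one obtains only an ordinary commutator in $\hG$, so the $(\hG,\bG)$-condition that every $2$-simplex has $\partial_0 \sigma$ or $\partial_2 \sigma$ labelled in $\bG$ must be leveraged when choosing the cutting curves. My plan for this step is first to normalize the $\Delta$-complex by contracting a maximal tree of the $1$-skeleton to a single $0$-cell, taking care that the $(\hG,\bG)$-constraint propagates through the collapse, and then to read off a handle decomposition from the resulting polygon model, running the construction of Proposition \ref{prop 6.5} in reverse. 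The $(\hG,\bG)$-constraint on each $2$-cell should then translate into the statement that one of the two handle generators must lie in $\bG$. I expect this normalization-and-translation step to require the most care, since each modification of the triangulation must be made so as to preserve the $(\hG,\bG)$-labelling structure; a possible alternative is to avoid CW reduction entirely and instead argue directly on the triangulation by performing local surgeries, but either route is where the essential use of normality of $\bG$ in $\hG$ enters.
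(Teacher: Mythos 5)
Your proposal correctly isolates the hard part—guaranteeing that the extracted commutators are genuine $(\hG,\bG)$-commutators—but the plan you offer for that step has a gap, and it diverges substantially from how the paper actually resolves it.

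The paper does not argue by induction on genus and does not normalize the $\Delta$-complex to a polygon. Instead it makes one global construction: let $S'$ be the subcomplex of $(S,\SSS)$ consisting of all cells labelled in $\bG$. Each $2$-simplex $\sigma$ not in $S'$ has, by the $(\hG,\bG)$-condition, exactly two $1$-faces not in $S'$; joining their midpoints inside $\sigma$ and taking the union over all such $\sigma$ produces a $1$-complex $C \subset S$. One then proves that (A) each component of $C$ is a circle, (B) each component is homotopic (rel a basepoint) to a loop in $S'$, and (C) $S'$ is a deformation retract of $S - C$. These facts, together with the classification of compact surfaces, let one choose the $n$ ``$h$-curves'' $\gamma_1,\dots,\gamma_n$ of a standard handle system so that each $\gamma_i$ can be homotoped into $S'$ (and hence has label in $\bG$), while the companion ``$g$-curves'' $\delta_i$ are arbitrary. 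Normality of $\bG$ is then used at the very end when conjugating by the base paths. This curve system $C$ is the crucial idea, and it is absent from your proposal.

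Concretely, here is where your outline breaks: contracting a maximal tree of the $1$-skeleton changes the labels of the remaining edges (to record the base-path conjugation), and there is no reason the updated labelling should still satisfy the $(\hG,\bG)$-condition on each $2$-cell. Likewise, ``running Proposition \ref{prop 6.5} in reverse'' presupposes the surface already sits in the special $(4m+1)$-gon form produced there; a general $(\hG,\bG)$-simplicial surface does not. Your inductive step has a similar issue: cutting along a pair of ``transversally intersecting simple closed curves'' leaves the realm of $\Delta$-complexes (the curves need not be simplicial), and even after re-triangulating you have no control over whether the resulting labelling is again a $(\hG,\bG)$-labelling, so the inductive hypothesis cannot be invoked. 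What is missing is a mechanism that, from the purely local constraint on $2$-cells, produces a global system of disjoint curves forced to lie (up to homotopy) in the $\bG$-labelled part of the surface. The paper's $C$-construction is exactly that mechanism; without it, or a substitute for it, the claim that ``one of the two handle generators must lie in $\bG$'' has no justification.

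Your base case $n=0$ is fine, and the observation that normality of $\bG$ must enter is correct. But the core of the argument as proposed would not go through.
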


\begin{proof}
Throughout this proof, we assume that $BN$ is a subspace of $BG$ such that the map $\pi_1(BN) \hookrightarrow \pi_1(BG)$ induced by the inclusion $BN \hookrightarrow BG$ corresponds to the inclusion $N \hookrightarrow G$.
Let $(S,\SSS, f)$ be a $(\hG,\bG)$-simplicial surface with boundary $x$ such that the genus of $S$ is $n$. Let $(S', \SSS')$ be the subcomplex of $(S, \SSS)$ defined as follows: The set of $0$-simplices $\SSS'_0$ of $\SSS'$ is equal to $\SSS_0$. The set of $1$-simplices $\SSS'_1$ of $\SSS'$ is the 1-simplices of $\SSS$ labelled by an element of $N$.
The set of $2$-simplices $\SSS'_2$ is the simplices $\sigma$ satisfying that all of $\partial_0 \sigma$, $\partial_1 \sigma$, and $\partial_2 \sigma$ are contained in $\SSS'_1$.
In the same way as in Remark \ref{rem geometric interpretation of cl_G}, we can construct a continuous map $f' \colon S \to BG$ satisfying the following properties:
\begin{itemize}
\item Every $0$-simplices of $(S,\SSS)$ are mapped to the basepoint of $BG$.

\item For every $1$-simplex $e$ of $(S,\SSS)$, the homotopy class of $f'|_e$ is $f(e)$.

\item $f(S') \subset BN$
\end{itemize}

Let $\sigma$ be a 2-simplex of $(S,\SSS)$ not contained in $\SSS'_2$. Since $f$ is a $(G,N)$-labelling, $\partial_1 \sigma$ is not contained in $S'$, either $\partial_0 \sigma$ or $\partial_2 \sigma$ is not contained in $S'$, and the other is contained in $S'$. In particular, $\sigma$ has exactly two $1$-faces not contained in $S'$.
Consider the line segment in $S$ which connects the two midpoints of the two $1$-faces of $\sigma$ not contained in $S'$, and let $C$ be the union of these line segments.
Then $C$ is a 1-dimensional CW-complex contained in $S$, and satisfies the following properties:

\begin{enumerate}[(1)]
\item[(A)] Every connected component of $C$ is homeomorphic to $S^1$.

\item[(B)] For each connected component $\gamma$ of $C$, the inclusion $\gamma \hookrightarrow S$ is homotopic to a map factored through $S'$.

\item[(C)] $S'$ is a deformation retract of $S - C$.
\end{enumerate}

\begin{figure}[t]
\begin{picture}(340,130)(10,0)

  \put(140,30){\vector(-1,0){120}}
  \put(140,30){\vector(-1,1){60}}
  \put(20,30){\vector(1,1){60}}

  \put(125,75){\vector(-3,1){43}}
  \put(125,75){\line(-3,1){45}}
  \put(140,30){\vector(-1,3){15}}

\put(35,75){\vector(3,1){43}}
\put(35,75){\line(3,1){45}}
\put(20,30){\vector(1,3){15}}

\put(20,30){\line(-1,-3){5}}
\put(145,15){\vector(-1,3){5}}

\put(68,18){\tiny $[\gk_i, h_i]$}

\put(16,55){\tiny $h_i$}
\put(51,89){\tiny $g_i$}

\put(100,89){\tiny $h_i$}
\put(137,55){\tiny $g_i$}

\put(51,52){\tiny $h_i g_i$}
\put(93,52){\tiny $g_i h_i$}

\put(300,0){\vector(1,1){40}}
\put(300,0){\vector(-1,0){40}}
\put(300,0){\vector(1,2){40}}

\put(300,0){\vector(-1,3){40}}
\put(300,0){\vector(-1,1){80}}
\put(300,0){\vector(-2,1){80}}

\put(340,40){\vector(0,1){38}}
\put(340,40){\line(0,1){40}}
\put(340,80){\line(-3,4){20}}

\put(260,120){\vector(-1,-1){40}}
\put(220,80){\vector(0,-1){40}}
\put(220,40){\vector(1,-1){40}}

\put(323,13){\tiny $[g_1, h_1]$}
\put(210,13){\tiny $[g_m, h_m]$}
\put(170,58){\tiny $[g_{m-1},h_{m-1}]$}
\put(343,55){\tiny $[\gk_2,h_2]$}

\put(259,-9){\footnotesize $x$}

\put(265,115){$\cdots$}
\end{picture}
\caption{} \label{fig 5}
\end{figure}

Indeed, we will first show (A). Since every $1$-cell of $S$ is contained in at most two $2$-faces, there does not exist any vertex in $C$ with degree greater than 2. Thus $C$ is a compact 1-dimensional manifold which may have a non-empty boundary. Suppose that $C$ has a non-empty boundary and $v$ is a point in $\partial C$. If $v$ is not contained in the boundary of $S$, then $v$ is contained in exactly two $2$-cells $\sigma$ and $\tau$ of $S$. Since $\sigma$ and $\tau$ are not mapped to $BN$ by the map $f'$, $v$ is contained in the line segments of $\sigma$ and $\tau$.
Thus $v$ is not a boundary point in $C$.
Therefore $\partial C$ is contained in $\partial S$. Since $\partial S$ has only one $1$-simplex, we have that $\partial C$ consists of one point. Since $C$ is a compact $1$-dimensional manifold, this is impossible. Hence we have $\partial C = \emptyset$.
This completes the proof of (A).

Next we show (B). Let $\gamma$ be a connected component of $C$.
There are two ways to verify (B).
Since $\gamma$ is a simple closed curve in the orientable surface $S$, the normal bundle of $\gamma$ is trivial and hence we can slide $\gamma$ to $S'$. The second one is more concrete. Let $\sigma$ be a 2-cell not contained in $S'$. Since $f$ is a $(\hG,\bG)$-labelling, we can slide the line segment to $S'$ along the directions of each $1$-simplex containing an endpoint (see Figure \ref{fig 6}). Combining these deformations for all the line segments forming $C$, we have a homotopy from $\gamma \hookrightarrow S$ to a map factored through $S'$. Considering similar deformations, we have condition (C).

We may consider each connected component $\gamma$ of $C$ is a simple closed curve whose basepoint is mapped to a vertex in $S'$ by the homotopy considered in (B).

We define simple closed curves $\gamma_1, \cdots, \gamma_n$ as follows: recall that a simple closed curve in a topological surface $S$ is  \emph{non-separating} if the number of connected components of $S - \gamma$ is not larger than the number of connected components of $S$. First, if $C$ has a connected component $\gamma$ which is non-separating in $S$, then set $\gamma_1 = \gamma$ and $C_1 = C - \gamma$. Next if $C_1$ has a connected component which is non-separating in $S - \gamma_1$, then let $\gamma_2$ denote the connected component and set $C_2 = C_1 - \gamma_2$. Iterating this, we have simple closed curves $\gamma_1, \cdots, \gamma_k$ such that each $\gamma_i$ is a non-separating simple closed curve in $C - (\gamma_1 \cup \cdots \gamma_{i-1})$, and every connected component of $C - (\gamma_1 \cup \cdots \cup \gamma_k)$ is separating in $S - (\gamma_1 \cup \cdots \cup \gamma_k)$. Since the genus of $S$ is $n$, we have that $k \le n$, and hence $S - C$ has $n-k$ handles. If $k < n$, there exists a non-separating simple closed curve $\gamma_{k+1}$ in $S - C$. If $k + 1 \ne n$, we can further take a non-separating simple closed curve $\gamma_{k+2}$ in $S - (C \cup \gamma_{k+1})$. Iterating this, we can take simple closed curves $\gamma_{k+1}, \cdots, \gamma_n$
such that $S - (\gamma_1 \cup \cdots \cup \gamma_n)$ has no non-separating simple closed curves. For each $i$ greater than $k$ we take a basepoint of $\gamma_i$
in such a way that it is mapped to a vertex in $S'$ by the deformation retract considered in (C).

\begin{figure}[t]
\begin{picture}(270,100)(0,0)
\put(10,10){\vector(1,2){40}}
\put(90,10){\vector(-1,2){40}}
\put(90,10){\vector(1,2){40}}
\put(170,10){\vector(-1,2){40}}
\put(170,10){\vector(1,2){40}}
\put(250,10){\vector(-1,2){40}}

\put(-10,10){\line(1,0){280}}
\put(-10,90){\line(1,0){280}}

\multiput(30,50)(40,0){6}{\circle*{3}}

\multiput(26,58)(80,0){3}{\vector(1,2){6}}
\multiput(74,58)(80,0){3}{\vector(-1,2){6}}

\put(275,46){$C$}

\linethickness{1pt}

\put(-10,50){\line(1,0){280}}

\end{picture}
\caption{} \label{fig 6}
\end{figure}

By the classification theorem of compact surfaces, there exists a homeomorphism from $S$ to the surface depicted in Figure \ref{fig 7} which maps $\gamma_1, \cdots, \gamma_n$ to the simple closed curves as are depicted by red curves in Figure \ref{fig 7}.
Define simple closed curves $\delta_1, \cdots, \delta_n$ and paths $\alpha_1, \cdots, \alpha_n$ as are depicted in Figure \ref{fig 7}.
Here we assume that the basepoint of $\delta_i$ coincides with the basepoint of $\gamma_i$.
The basepoint of $S$ is the vertex contained in the boundary of $S$, and $\alpha_i$ connects the basepoint of $S$ with the basepoint of $\gamma_i$ and $\delta_i$.

Let $\overline{\alpha}_i$ be the reverse of the path $\alpha_i$.
Then $f'(\alpha_i \cdot \gamma_i \cdot \overline{\alpha}_i)$ is a loop in $B{\hG}$, and let $h_i$ be the element of $\hG$ associated to $f'(\alpha_i \cdot \gamma_i \cdot \overline{\alpha}_i)$.
Similarly, let $\gk_i$ be the element of $\hG$ associated to $f'(\alpha_i \cdot \delta_i \cdot \overline{\alpha}_i)$. Then we have
\[x = [h_n^{-1}, g_n^{-1}] \cdots [h_1^{-1}, g_1^{-1}].\]
Thus it suffices to prove that $h_i  = [f'(\alpha_i \cdot \gamma_i \cdot \overline{\alpha}_i)]$ is an element of $\bG$.

For each $i$, there exists a map $\varphi_i \colon S \to S$ which satisfies the following:
\begin{enumerate}[(a)]
\item There exists a based homotopy from the identity of $S$ to $\varphi_i$.

\item $\varphi_i(\gamma_i) \subset S'.$

\item $\varphi_i$ sends the basepoint of $\gamma_i$ to a vertex in $S'$.
\end{enumerate}
Indeed, to see this, for $i \le k$, it follows from Proposition 0.16 of \cite{H} that the homotopy of $\gamma_i \hookrightarrow S$ sending $\gamma_i$ to $S'$ discussed in (B) extends to the homotopy $H_i$ of $S$ from the identity of $S$. Let $\varphi_i$ be the end of this homotopy $H_i$. For $i > k$, we can similarly prove the existence of $\varphi_i$ using the deformation retract from $S - C$ to $S'$ considered in (C).

By (a), we have
\[ h_i = f'_{\ast} ([\alpha_i \cdot \gamma_i \cdot \overline{\alpha}_i]) = f'_{\ast}\big( [\varphi_i (\alpha_i \cdot \gamma_i \cdot \overline{\alpha}_i)] \big) = [f' \circ \varphi_i \circ \alpha_i] \cdot [f' \circ \varphi_i \circ \gamma_i] \cdot [f' \circ \varphi_i \circ \alpha_i]^{-1}.\]
It follows from (c) that $f' \circ \varphi_i \circ \alpha_i$ is a loop in $B{\hG}$, and hence $[f' \circ \varphi_i \circ \alpha_i]$ is an element of $\hG$.
It follows from (b) that $f' \circ \varphi_i \circ \gamma_i$ is a loop in $B\bG$ and hence $[f' \circ \varphi_i \circ \alpha_i]$ is contained in $\bG$.
Since $\bG$ is normal, we have that $h_i$ is an element of $\bG$. This completes the proof.
\end{proof}

\begin{rem} \label{rem 1}
In the definition of $(\hG,\bG)$-simplicial surfaces, we do not admit a simplex $\sigma$ such that $\partial_1 \sigma$ is contained in $\bG$ but neither $\partial_0 \sigma$ nor $\partial_2 \sigma$ is contained in $\bG$. However, a similar geometric interpretation of $(\hG,\bG)$-commutator lengths holds if we admit such a triangle. To state it explicitly, we prepare some terminology.
We call a $G$-labelling $f$ on a simplicial surface $(S,\SSS)$ a \textit{pseudo-$(G,N)$-simplicial surface} if for every $\sigma \in \SSS_2$ one of $\partial_0 \sigma$, $\partial_1 \sigma$, and $\partial_2 \sigma$ is mapped to $N$.
Then in an almost the same argument as in the proof of Theorem \ref{thm 6.2},
it can be shown that the $(\hG,\bG)$-commutator length of $x \in [\hG,\bG]$ coincides with the minimum genus of a connected pseudo-$(\hG,\bG)$-surface with boundary $x$.
\end{rem}

\begin{figure}[t]
  \centering
  \includegraphics[width=12cm]{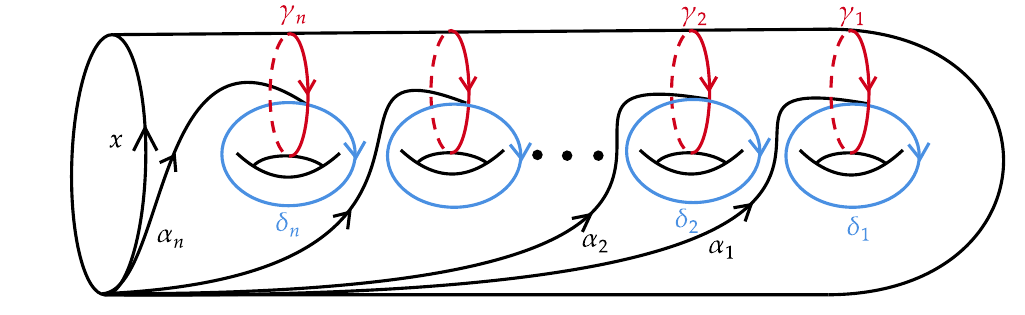}

\caption{} \label{fig 7}
\end{figure}

\section{Proof of the main theorem}\label{section Proof of the main theorem}

The goal of this section is to complete the proof of our generalization of Bavard's duality. See Theorem~\ref{thm 7.1} for the precise statement. This part is a straightforward generalization of the corresponding part of Bavard's original proof.

Let $\QQQ^{\mathrm{h}}(\bG)^{\hG}$ denote the space of homogeneous $\qad$-invariant quasimorphisms on $\bG$, and $\HH^1(\bG)^{\hG}$ the space of $\qad$-invariant homomorphisms from $\bG$ to $\RR$. Recall that $\QQQ_\bG = \QQQ_\bG(\hG)$ is the space of $\bG$-quasimorphisms on $\hG$ and $\HH^1_\bG = \HH^1_\bG(\hG)$ is the space of $\bG$-homomorphisms on $\hG$ (see Section 2).

\begin{thm} \label{thm 7.1}
For $a \in [\hG,\bG]$, the following equality holds:
\[\scl_{\hG,\bG}(a) = \frac{1}{2} \sup_{f \in \QQQ^{\mathrm{h}}(\bG)^{\hG} - \HH^1(\bG)^{\hG}} \frac{|f(a)|}{D(f)}.\]
\end{thm}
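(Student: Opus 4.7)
The plan is to combine Theorem~\ref{main thm 3}, namely $\fill_{\hG,\bG}(a) = 4\cdot\scl_{\hG,\bG}(a)$, with the Hahn--Banach-type duality of Corollary~\ref{cor 5.8}, which expresses $\|\cdot\|'$ as a supremum over $\bG$-quasimorphisms. The remaining task is to translate that supremum into one over homogeneous $\hG$-invariant quasimorphisms on $\bG$ via Lemma~\ref{lem 3.3}, Lemma~\ref{lem 3.1}, and Proposition~\ref{prop 3.4.1}. I would establish the two opposite inequalities in the target identity separately: the inequality $\tfrac{1}{2}\sup|f(a)|/D(f)\le \scl_{\hG,\bG}(a)$ by a direct estimate on a minimal product of single $(\hG,\bG)$-commutators, and the reverse one by duality combined with homogenization.

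For the first (easy) direction, I would fix $f\in\QQQ^{\mathrm{h}}(\bG)^{\hG}-\HH^1(\bG)^{\hG}$ with $D(f)>0$, and write $a^n=c_1\cdots c_m$ with $m=\cl_{\hG,\bG}(a^n)$ and each $c_i$ a single $(\hG,\bG)$-commutator. Applying the defect inequality $m-1$ times yields $|f(a^n)-\sum_i f(c_i)|\le (m-1)D(f)$, while Lemma~\ref{lem 3.6} bounds each $|f(c_i)|$ by $D(f)$; together these give $|f(a^n)|\le (2m-1)D(f)$. Homogeneity turns this into $n|f(a)|\le(2\cl_{\hG,\bG}(a^n)-1)D(f)$, and dividing by $n$ and letting $n\to\infty$ produces $|f(a)|\le 2\scl_{\hG,\bG}(a)\cdot D(f)$, as desired.

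For the reverse direction, I would start from Corollary~\ref{cor 5.8}. Take any $F\in\QQQ_\bG-\HH^1_\bG$ with $D''(F)>0$, and set $g_F:=\overline{F|_\bG}$. By Lemmas~\ref{lem 3.3} and~\ref{lem 3.1}, $g_F$ is a homogeneous $\hG$-invariant quasimorphism on $\bG$ with $D(F|_\bG)\le D''(F)$ and $D(g_F)\le 2D(F|_\bG)$ (the latter being the standard homogenization defect bound from Lemma~2.58 of \cite{Ca}). The estimate $|F(a^n)-ng_F(a)|\le D(F|_\bG)\le D''(F)$ and the inequality $1/D''(F)\le 2/D(g_F)$ combine to give
\[
\frac{|F(a^n)|}{D''(F)}\;\le\;\frac{2n|g_F(a)|}{D(g_F)}+1.
\]
If $g_F$ is a homomorphism, then $\hG$-invariance and the homomorphism property jointly force $g_F([\hg,\bg])=g_F(\hg\bg\hg^{-1})-g_F(\bg)=g_F(\bg)-g_F(\bg)=0$ for every $\hg\in\hG$ and $\bg\in\bG$ (using the normality of $\bG$), so $g_F$ vanishes on $[\hG,\bG]$ and the offending term is zero. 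Otherwise, $g_F\in\QQQ^{\mathrm{h}}(\bG)^{\hG}-\HH^1(\bG)^{\hG}$ contributes to the right-hand side, so $|g_F(a)|/D(g_F)\le\sup_f|f(a)|/D(f)$. Taking the supremum over $F$, applying Corollary~\ref{cor 5.8}, dividing by $n$, letting $n\to\infty$, and invoking Theorem~\ref{main thm 3} yields $4\scl_{\hG,\bG}(a)=\fill_{\hG,\bG}(a)\le 2\sup_f|f(a)|/D(f)$, which is the desired bound.

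The main obstacle will be the bookkeeping for the case when $g_F=\overline{F|_\bG}$ is a homomorphism: such $F$ have no counterpart in the right-hand side supremum, so they must be shown to make a null contribution. The crucial ingredients are normality of $\bG$ in $\hG$ and $\hG$-invariance of $g_F$ (Lemma~\ref{lem 3.1}), which together force any $\hG$-invariant homomorphism on $\bG$ to vanish on the mixed commutator subgroup $[\hG,\bG]$. A secondary subtlety is the factor of $2$ in the final estimate, which ultimately originates in the standard inequality $D(\overline{\phi})\le 2D(\phi)$ and is exactly what matches the factor $1/2$ in the statement of Theorem~\ref{thm 7.1}.
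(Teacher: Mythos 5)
Your proposal is correct and follows essentially the same route as the paper: the easy direction via Lemma~\ref{lem 3.6} and homogeneity, and the hard direction by combining Theorem~\ref{main thm 3} with Corollary~\ref{cor 5.8}, restricting and homogenizing an $\bG$-quasimorphism, and using $D(\overline{F|_\bG}) \le 2D(F|_\bG) \le 2D''(F)$. The only difference is cosmetic (you bound each $\frac{|F(a^n)|}{D''(F)}$ directly rather than extracting a near-optimal sequence $\fk_{n,n}$), and you explicitly treat the degenerate case where $\overline{F|_\bG}$ is a homomorphism -- a point the paper passes over silently but which is handled exactly as you argue, using normality of $\bG$ to show that any $\hG$-invariant homomorphism on $\bG$ kills $[\hG,\bG]$.
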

\begin{proof}
Let $f \in \QQQ^{\mathrm{h}}(\bG)^{\hG}$. Suppose $\cl_{\hG,\bG}(a) = m$ and let $c_1, \cdots, c_m$ be $(\hG,\bG)$-commutators such that $a = c_1 \cdots c_m$. Then, by Lemma \ref{lem 3.6}, we have
\[f(a) \sim_{(m-1)D(f)} f(c_1) + \cdots + f(c_m) \sim_{mD(f)} 0.\]
Thus we have $|f(a)| \le (2m-1) D(f) = (2 \cdot \cl_{\hG,\bG}(a) - 1) D(f)$. Therefore
\[|f(a)| = \frac{|f(a^n)|}{n} \le \frac{\cl_{\hG, \bG}(a^n)}{n} \cdot 2 D(f) - \frac{D(f)}{n}.\]
By taking the limit, we have an inequality
\[|f(a)| \le 2 D(f) \cdot \scl_{\hG,\bG}(a).\]
Thus we have
\[\scl_{\hG,\bG}(a) \ge \frac{1}{2}\sup_{f \in \QQQ^{\mathrm{h}}(\bG)^{\hG} - \HH^1(\bG)^{\hG}} \frac{|f(a)|}{D(f)} .\]

Next, we show the converse of the inequality. By Theorem \ref{thm 6.1} and Corollary \ref{cor 5.8}, we have
\[4 \cdot \scl_{\hG,\bG}(a) = \fill_{\hG,\bG}(a) = \lim_{n \to \infty} \frac{\| a^n\|'}{n} = \lim_{n \to \infty} \Big( \sup_{f \in \QQQ_\bG - \HH^1_\bG} \frac{|f(a^n)|}{n D''(f)}\Big). \]
For each pair $n$ and $m$ of positive integers, take $\fk_{n,m} \in \QQQ_\bG$ to satisfy
\[\sup_{f \in \QQQ_\bG - \HH^1_\bG} \frac{|f(a^n)|}{n D''(f)} \sim_{\frac{1}{m}} \frac{|\fk_{n,m}(a^n)|}{n D''(\fk_{n,m})}.\]
Hence, by Lemma 2.21 of \cite{Ca} and the definition of $D''$, we have
\[\| \fk_{n,m}|_\bG - \overline{\fk_{n,m}|_\bG}\| \le D(\fk_{n,m} |_\bG) \le D''(\fk_{n,m}).\]
Here $\overline{\fk_{n,m} |_\bG}$ is the homogenization of the quasimorphism $\fk_{n,m}|_\bG$ on $\bG$. Then
\[\frac{|\overline{\fk_{n,m} |_\bG} (a)|}{D''(\fk_{n,m})} = \frac{|\overline{\fk_{n,m}|_\bG}(a^n)|}{nD''(\fk_{n,m})} \sim_{\frac{1}{n}} \frac{\fk_{n,m}(a^n)}{n D''(\fk_{n,m})} \sim_{\frac{1}{m}} \sup_{f \in \QQQ_\bG - \HH^1_\bG} \frac{|f(a^n)|}{nD''(f)}.\]
Thus we have that
\[\lim_{n \to \infty} \frac{|\overline{\fk_{n,n} |_\bG}(a)|}{D''(\fk_{n,n})} = \lim_{n \to \infty} \Big( \sup_{f \in \QQQ_\bG - \HH^1_\bG} \frac{|f(a^n)|}{n D''(f)}\Big) = 4 \cdot \scl_{\hG,\bG}(a).\]
Therefore we have
\[\scl_{\hG,\bG}(a) \le \frac{1}{4} \sup_{f \in \QQQ_\bG - \HH^1_\bG} \frac{|\overline{f|_\bG}(a)|}{D''(f)} \le \frac{1}{2} \sup_{f \in \QQQ_\bG - \HH^1_\bG} \frac{|\overline{f|_\bG}(a)|}{D(\overline{f|_\bG})} \le \frac{1}{2} \sup_{f \in \QQQ^{\mathrm{h}}(\bG)^{\hG} - \HH^1(\bG)^{\hG}} \frac{|f(a)|}{D(f)}.\]
Here we use $D(\overline{f|_\bG}) \le 2 D(f |_\bG) \le 2 D''(f)$ and the fact that the restriction to $\bG$ of an $\bG$-quasimorphism is $\qad$-invariant (Lemma \ref{lem 3.3}).
\end{proof}

\section{On equivalences of $\scl_\hG$ and $\scl_{\hG,\bG}$}\label{section=equi_scl}

Two functions $\varphi$ and $\psi$ on a set $X$ are (bi-Lipschitzly) \textit{equivalent} on $X$ if there exists a positive number $C$ such that $C^{-1} \cdot \psi(x) \le \varphi(x) \le C \cdot \psi(x)$.
In this section, we 
provides a sufficient condition under which $\scl_\hG$ and $\scl_{\hG,\bG}$ are equivalent on $[\hG,\bG]$.
To state our result, we recall
the notion of virtually split exact sequences.

\begin{dfn}\label{defn=vs}
  A short exact sequence $1 \to \bG \to \hG \xrightarrow{\ppi} Q \to 1$ is said to \emph{virtually split}
  if there exist a subgroup $\Lambda$ of $Q$ of finite index and a homomorphism $s \colon \Lambda \to \hG$ such that $\ppi \circ s = \mathrm{id}_{\Lambda}$.
  We refer to such a pair $(s,\Lambda)$ as a \emph{virtual section} of $\ppi$.

\end{dfn}

Let $\hG$ be a group and $\bG$ a normal subgroup of $\hG$. Let $\ppi$ denote the projection $G \to \hG/\bG$.
If $\hG/\bG$ is a
virtually free group, including a finite group, or $\ppi \colon \hG \to Q:=\hG/\bG$ has a section homomorphism $s \colon
Q \to \hG$, then $1 \to \bG \to \hG \to 
Q \to 1$ virtually splits.



One of the applications of Theorem \ref{main thm} is to show
the following theorem on the equivalence of $\scl_\hG$ and $\scl_{\hG,\bG}$ on $[\hG,\bG]$.

\begin{thm} \label{thm scl equivalence}
Assume that the exact sequence $1\to \bG \to \hG \to
G/\bG
\to 1$ virtually splits.
Then for every $x \in [\hG,\bG]$, the following inequalities holds:
\[
\scl_\hG(x) \le \scl_{\hG,\bG}(x) \le 2 \cdot \scl_\hG(x).
\]
In particular, $\scl_\hG$ and $\scl_{\hG,\bG}$ are equivalent on $[\hG,\bG]$.
\end{thm}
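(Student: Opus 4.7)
The proof will combine the two Bavard dualities---the classical one (Theorem~\ref{thm Bavard}) and the version for $(\hG,\bG)$-commutator length (Theorem~\ref{main thm})---with the extension result for $\hG$-invariant homogeneous quasimorphisms (Proposition~\ref{prop=extension}). The lower bound $\scl_\hG(x) \le \scl_{\hG,\bG}(x)$ is immediate and requires no hypothesis: every $(\hG,\bG)$-commutator is in particular a commutator of $\hG$, so $\cl_\hG(x^n)\le \cl_{\hG,\bG}(x^n)$ for every $n\ge 1$; dividing by $n$ and passing to the limit yields the inequality. The substance of the theorem lies in the upper bound $\scl_{\hG,\bG}(x) \le 2\,\scl_\hG(x)$, and this is where virtual splitting enters.

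For the upper bound, the plan is to convert each test quasimorphism appearing in Bavard's duality for $\scl_{\hG,\bG}$ into a test quasimorphism for $\scl_\hG$, at the cost of a factor of $2$ in the defect. Concretely, fix $f \in \QQQ^{\mathrm{h}}(\bG)^{\hG} - \HH^1(\bG)^{\hG}$. By Proposition~\ref{prop=extension}, the virtual splitting hypothesis supplies an extension $f' \colon \hG \to \RR$ with $f'|_\bG = f$ and $D(f') \le D(f)$. Then I homogenize: set
\[
\tilde f(y) := \lim_{n \to \infty} \frac{f'(y^n)}{n}\qquad (y\in \hG).
\]
Standard facts (see \cite[Lemma 2.58]{Ca}) give that $\tilde f \in \QQQ^{\mathrm{h}}(\hG)$ and $D(\tilde f) \le 2D(f') \le 2D(f)$. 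Since $f$ is already homogeneous, for $y \in \bG$ we have $f'(y^n) = f(y^n) = n f(y)$, so $\tilde f|_\bG = f$; in particular $\tilde f(x) = f(x)$ for $x \in [\hG,\bG] \subset \bG$. Moreover, $\tilde f$ is not a homomorphism on $\hG$, for otherwise its restriction $f = \tilde f|_\bG$ would lie in $\HH^1(\bG)^\hG$, contradicting our choice of $f$.

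With this construction in hand, the two Bavard dualities combine as follows. Using $D(f) \ge \frac{1}{2}D(\tilde f)$,
\[
\frac{|f(x)|}{D(f)} \;=\; \frac{|\tilde f(x)|}{D(f)} \;\le\; 2\cdot \frac{|\tilde f(x)|}{D(\tilde f)} \;\le\; 2\sup_{h \in \QQQ^{\mathrm{h}}(\hG) - \HH^1(\hG)}\frac{|h(x)|}{D(h)} \;=\; 4\,\scl_\hG(x),
\]
the last equality being Theorem~\ref{thm Bavard}. Taking the supremum over $f \in \QQQ^{\mathrm{h}}(\bG)^{\hG} - \HH^1(\bG)^{\hG}$ and applying Theorem~\ref{main thm} on the left gives $2\,\scl_{\hG,\bG}(x) \le 4\,\scl_\hG(x)$, which is the desired inequality.

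The only genuine obstacle is the extension step, which is precisely the content of Proposition~\ref{prop=extension}; once that is available, the rest is a bookkeeping exercise with defects and suprema. The factor of $2$ in the conclusion comes entirely from the passage from $f'$ (possibly not homogeneous) to its homogenization $\tilde f$. It is conceivable that in special cases one could obtain an extension that is already homogeneous and thus improve the constant, but for the stated conclusion no such refinement is needed.
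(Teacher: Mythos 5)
Your proof is correct and follows essentially the same route as the paper: extend $f$ via Proposition~\ref{prop=extension}, homogenize, compare defects using the standard bound $D(\overline{f'})\le 2D(f')$, and combine the two Bavard dualities. The paper phrases the argument by choosing an $\epsilon$-approximating $f$ rather than carrying the supremum through, and it leaves implicit the check that the homogenized extension is not a homomorphism, which you spell out; otherwise the two arguments coincide.
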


Note that the first and second authors showed the above theorem in the case that $[\hG,\bG] = \bG$ and $G \to
\hG/\bG$ has a section homomorphism \cite{KK}.

\begin{rem} \label{remark non-equivalence example}
Let $\Sigma_l$ denote the orientable closed connected surface whose genus $l$ is at least $2$, and $\omega$ a symplectic form of $\Sigma_l$. Kawasaki and Kimura \cite{KK} showed that $\scl_{G,N}$ and $\scl_G$ are \emph{not} equivalent when $G = \Symp_0(\Sigma_l, \omega)$ and $N = \Ham(\Sigma_l, \omega)$. Here $\Symp_0(\Sigma_l, \omega)$ denotes the identity component of the group of symplectomorphisms on $(\Sigma_l, \omega)$, and $\Ham(\Sigma_l, \omega)$ denotes the group of Hamiltonian diffeomorphisms on $(\Sigma_l, \omega)$.
\end{rem}

\begin{rem} \label{rem 7.2}
On the other hand, the equivalence between $\scl_{\hG,\bG}$ and $\scl_\bG$ does \emph{not} hold in general even when the groups $\hG$ and $\bG$ are quite similar for instance, $[\hG:\bG]<\infty$. The first and second authors showed in \cite{KK} that $\scl_{B_3,P_3}$ and $\scl_{P_3}$ are not equivalent on $[P_3,P_3]$.
As an application of Theorem \ref{main thm}, we can show that in the following simple example, $\scl_{\hG,\bG}$ and $\scl_\bG$ are not equivalent on $[\bG,\bG]$.

Let $\bG$ be the free group $F_2$ generated by $\langle a, b \rangle$, and $\hG$ the semi-direct product $F_2 \rtimes (\ZZ/2\ZZ)$. Here, $\ZZ/2\ZZ$ means that the cyclic group $\{1,\tau\}$ of order $2$ and the action of $\ZZ/2\ZZ$ on $F_2$ is given by the automorphism exchanging $a$ and $b$.

Set $x = [a,b] \in [\bG,\bG]$. Then it is well-known that $\scl_\bG(x) = 1 / 2$. On the other hand, since
\[
(1,\tau) \cdot ([a,b], 1) \cdot (1, \tau) = ([b,a], 1) = ([a,b], 1)^{-1},
\]
every $\hG$-invariant homogeneous quasimorphism vanishes at $[a,b]$. Thus Theorem \ref{main thm} implies that $\scl_{\hG,\bG}([a,b]) = 0$. Hence, $\scl_{\hG,\bG}$ and $\scl_\bG$ are not equivalent on $[\hG,\bG]$. Note that in this example $\hG/\bG$ is not only finite but also there exists a section homomorphism $\hG/\bG \to \hG$.

See also Proposition~\ref{prop=freescl} for other examples of pairs $(\hG,\bG)$ such that $[\hG:\bG]<\infty$ and that $\scl_{\hG,\bG}$ and $\scl_\bG$ are not equivalent on $[\hG,\bG]$.
\end{rem}

Now we start the proof of Theorem \ref{thm scl equivalence}. We first prove the following generalization of some extension results of quasimorphisms by Ishida \cite{I} and Shtern \cite{Sh} (see also \cite[Theorem~1.10 and Theorem~6.13]{KKMM} for further generalizations of this proposition); if $f$ is furthermore homogeneous, then the following proposition is stated in Proposition~\ref{prop=extension}:

\begin{prop}[Extension theorem for $\hG$-invariant quasimorphisms, general version]\label{prop 7.3}
Assume that the exact sequence $1\to \bG \to \hG
\to Q \to 1$ virtually splits. Then, for every $\hG$-quasi-invariant quasimorphism $f$ on $\bG$, there exists a quasimorphism $f'$ on $\hG$ such that $f'|_\bG = f$ and $D(f') \le D(f) + 3 D'(f)$.
\end{prop}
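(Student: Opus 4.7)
The plan is to extend $f$ in two stages, first from $\bG$ to the finite-index subgroup $\hG_0 := \ppi^{-1}(\Lambda)$ via the semi-direct structure supplied by $s$, and then from $\hG_0$ to all of $\hG$ using coset representatives. As a preliminary reduction, one may replace $\Lambda$ with its normal core in $Q$, which is still of finite index and on which $s$ remains a section; this ensures that $\hG_0$ is normal in $\hG$ of finite index, say $k$, and that each $g \in \hG_0$ has a unique decomposition $g = x \cdot s(\lambda)$ with $x \in \bG$ and $\lambda \in \Lambda$.

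The first step is to define $f_0 \colon \hG_0 \to \RR$ by $f_0(xs(\lambda)) := f(x)$. A direct computation using the identity
\[
(x_1 s(\lambda_1))(x_2 s(\lambda_2)) = x_1 \cdot \bigl(s(\lambda_1) x_2 s(\lambda_1)^{-1}\bigr) \cdot s(\lambda_1\lambda_2),
\]
together with one application of the quasimorphism inequality of $f$ (contributing $D(f)$) and one application of the $\hG$-quasi-invariance of $f$ applied to the conjugate of $x_2$ by $s(\lambda_1) \in \hG$ (contributing $D'(f)$), yields $D(f_0) \le D(f) + D'(f)$, and clearly $f_0|_\bG = f$.

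For the second step, I fix coset representatives $r_1 = e_\hG, r_2, \ldots, r_k$ of $\hG_0$ in $\hG$, and define $f'(g) := f_0(h)$ where $g = h \cdot r_i$ is the unique decomposition with $h \in \hG_0$. This agrees with $f$ on $\bG$ since $r_1 = e_\hG$. Writing $g_j = h_j r_{i_j}$ and using the normality of $\hG_0$ in $\hG$, one has
\[
g_1 g_2 = h_1 \cdot (r_{i_1} h_2 r_{i_1}^{-1}) \cdot h_{i_1, i_2} \cdot r_{i_3},
\]
where $h_{i_1, i_2} := r_{i_1} r_{i_2} r_{i_3}^{-1} \in \hG_0$ is a ``$2$-cocycle'' term depending only on the finitely many pairs of indices coming from $\hG/\hG_0 \cong Q/\Lambda$.

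The main obstacle is to sharpen the defect bound to the claimed value $D(f) + 3D'(f)$. The naive estimate
\[
\bigl|f'(g_1 g_2) - f'(g_1) - f'(g_2)\bigr| \le 2 D(f_0) + \bigl|f_0(r_{i_1} h_2 r_{i_1}^{-1}) - f_0(h_2)\bigr| + |f_0(h_{i_1, i_2})|
\]
loses a factor of $2$ in front of $D(f)$. To tighten it, I expect to have to (i)~choose the lifts $r_i$ so that the cocycle elements $h_{i_1, i_2}$ lie in $s(\Lambda)$---on which $f_0$ vanishes---by arranging the $r_i$'s compatibly with a set-theoretic lift of the finite quotient $Q/\Lambda$; (ii)~verify that the $\hG$-conjugation-defect of $f_0$ under each $r_{i_1}$ is bounded by a constant multiple of $D'(f)$ alone, using the decomposition $h_2 = x_2 s(\lambda_2)$ and the $\hG$-quasi-invariance of $f$; and (iii)~compress the two applications of the quasimorphism inequality into a single direct one, at the cost of one extra $D'(f)$ coming from another invocation of quasi-invariance. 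Careful bookkeeping of these three ingredients should yield the target estimate $D(f') \le D(f) + 3D'(f)$, and the proof will be complete.
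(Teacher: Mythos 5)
Your first stage (extending $f$ to $\hG_0 = \ppi^{-1}(\Lambda)$ via the semi-direct decomposition, getting $D(f_0)\le D(f)+D'(f)$) is fine and mirrors Proposition~\ref{prop 3.4.1}. The second stage is where the argument genuinely breaks, and the three proposed repairs do not save it. The formula $f'(g) := f_0(h)$ with $g = h r_i$ introduces two correction terms that are \emph{not} controlled by $D(f)$ and $D'(f)$: the cocycle term $f_0(h_{i_1,i_2})$, and the conjugation term $|f_0(r_{i_1} h_2 r_{i_1}^{-1}) - f_0(h_2)|$. For the cocycle term, your ingredient~(i) would require a set-theoretic transversal $\{r_i\}$ of $\hG_0$ in $\hG$ whose cocycle takes values in $s(\Lambda)$; but $h_{i_1,i_2}$ only lies in $\hG_0 = \bG\, s(\Lambda)$ a priori, and forcing its $\bG$-component to be trivial amounts to splitting the extension $1\to\bG\to\ppi^{-1}(\Lambda u)\to\cdots$ in a compatible way, which need not exist. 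Even granting (i), the conjugation term is still uncontrolled: writing $h_2 = x_2 s(\lambda_2)$ and $r_{i_1} s(\lambda_2) r_{i_1}^{-1} = y\,s(\mu)$ with $y\in\bG$, one gets $f_0(r_{i_1}h_2 r_{i_1}^{-1}) - f_0(h_2) = f\big((r_{i_1}x_2 r_{i_1}^{-1})y\big) - f(x_2)$, and the term $|f(y)|$ has no bound in terms of $D(f)$, $D'(f)$ since $y$ ranges over an infinite subset of $\bG$ as $\lambda_2$ varies. What the proposal is missing is the \emph{averaging} idea: the paper does not pass through $\hG_0$ at all, but defines directly
\[
f'(\gk) = \frac{1}{\# B}\sum_{b\in B} f\big(\gk\cdot t(b\,\ppi(\gk))^{-1} t(b)\big),
\]
where $B$ is a transversal of $\Lambda$ in $Q$ and $t$ is the set-theoretic section satisfying $t(\lambda x) = s(\lambda)t(x)$. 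The left $\Lambda$-invariance of the summand, combined with the fact that summing over $B$ equals summing over $B\cdot\ppi(\gk_1)$, makes the troublesome terms telescope away; it is precisely this averaging that yields a defect bound depending only on $D(f)$ and $D'(f)$, and a single coset representative (rather than an average) cannot replicate it.
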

To derive Theorem~\ref{thm scl equivalence} from Theorem~\ref{main thm}, we need to estimate the defect of the extended quasimorphism in our extension theorem; the inequality in Proposition \ref{prop 7.3} does the job.
To 
obtain the desired upper bound of the defect, we generalize Ishida's idea \cite[Lemma~3.1]{I}.
\begin{proof}[Proof of Proposition~$\ref{prop 7.3}$]
Let $\ppi\colon G\to Q$ be the projection as in the short exact sequence. Let $(s,\Lambda)$ be a virtual section of
$\ppi$.
Let $B$ be a finite subset of $Q$ such that the map $\Lambda \times B \to Q$, $(\lambda, b) \mapsto \lambda b$ is bijective. Let $s' \colon B \to \hG$ be a map such that $\ppi \circ s'(b) = b$ for every $b \in B$.
Define a map $t \colon Q \to \hG$ by $t(\lambda b) = s(\lambda) s'(b)$ for every $\lambda \in \Lambda$ and every $b \in B$.
Then $t$ is a set-theoretic section of $\ppi \colon \hG \to Q$, and satisfies $t(\lambda x) = s(\lambda) t(x)$ for every $\lambda \in \Lambda$ and every $x \in Q$.
For a $\qad$-invariant quasimorphism $f \colon \bG \to \RR$ on $\bG$, define a function $f' \colon \hG \to \RR$ on $\hG$ by
\[f'(\gk ) = \frac{1}{\# B} \sum_{b \in B} f \big( \gk \cdot t(b \cdot \ppi(\gk))^{-1} t(b)\big).\]
Here observe that $\gk \cdot t(b \cdot \ppi(\gk))^{-1} t(b) \in  \bG$. Indeed, we have
\[
\ppi\big(\gk \cdot t(b \cdot \ppi(\gk))^{-1} t(b)\big)=\ppi(\gk)(b \cdot \ppi(\gk))^{-1}b=e_Q.
\]
Our goal is to show that $f'$ is a quasimorphism which is an extension of $f$. If $\gk \in \bG$, then $\ppi(\gk ) = e_Q$. Therefore we have
\[f'(\gk ) = \frac{1}{\# B} \sum_{b \in B} f (g \cdot t(b)^{-1} t(b)) = f(\gk ).\]
Next we have
\begin{eqnarray*}
f'(\gk_1 \gk_2) & = & \frac{1}{\# B} \sum_{x \in B} f(g_1 g_2 \cdot t(x \cdot \ppi(g_1 g_2))^{-1} t(x))\\
& \sim_{D'(f)} & \frac{1}{\# B} \sum_{x \in B} f(t(x) \cdot g_1 g_2 \cdot t(x \cdot \ppi(g_1 g_2))^{-1}) \\
& = & \frac{1}{\# B} \sum_{x \in B} f(t(x) \cdot g_1 \cdot t(x \cdot \ppi(g_1))^{-1} \cdot t(x \cdot \ppi(g_1)) \cdot g_2 \cdot t(x \cdot \ppi(g_1 g_2))^{-1})\\
& \sim_{D(f)}& \frac{1}{\# B} \sum_{x \in B} \Big(f(t(x) \cdot g_1 \cdot t(x \cdot \ppi(g_1))^{-1}) + f(t(x \cdot \ppi(g_1)) \cdot g_2 \cdot t(x \cdot \ppi(g_1 g_2))^{-1})\Big)\\
& \sim_{2D'(f)}& \frac{1}{\# B} \sum_{x \in B} \Big( f(g_1 \cdot t (x \cdot \ppi(g_1))^{-1} t(x)) + f(g_2 \cdot t(x \cdot \ppi(g_1 g_2))^{-1} t(x \cdot \ppi(g_1)))\Big)\\
& = & f'(\gk_1) + \frac{1}{\# B}\sum_{x \in B} f\big(g_2 \cdot t \big((x \cdot \ppi(g_1)) \cdot \ppi(g_2) \big)^{-1} \cdot t(x \cdot \ppi(g_1))\big).
\end{eqnarray*}
Hence it suffices to prove
\[
\frac{1}{\# B}\sum_{x \in B}f \big(g_2 \cdot t\big((x \cdot \ppi(g_1)) \cdot \ppi(g_2)\big)^{-1} \cdot t(x \cdot \ppi(g_1)) \big) = f'(\gk_2).
\]
To see this, define a function $F \colon Q \to \RR$ by
\[
F(x) = f(g_2 \cdot t(x \cdot \ppi(g_2))^{-1} t(x)).
\]
It suffices to see
\[
\sum_{x \in B} F(x) = \sum_{x \in B} F(x \cdot \ppi(\gk_1)).
\]
Let $\lambda_1, \cdots, \lambda_r \in \Lambda$ such that $B \ppi(g_1) \subset \lambda_1 B \cup \cdots \cup \lambda_r B$. Define $C_i \subset B \ppi(\gk_1)$ by
\[
C_i = \{ x \in B \ppi(\gk_1) \; | \; \lambda_i^{-1} x \in B\}.
\]
Since the map $\Lambda \times (B \ppi(\gk_1)) \to Q$, $(\lambda, b) \mapsto \lambda b$ is also bijective, we have $B = \lambda_1^{-1} C_1 \sqcup \cdots \sqcup \lambda_1^{-1} C_r$. Since $t(\lambda x) = s(\lambda) t(x)$ and $s$ is a homomorphism, $F$ is left $\Lambda$-invariant. Thus, we have
\begin{eqnarray*}
\sum_{x \in B} F(x \cdot \ppi(\gk_1)) & = & \sum_{x \in B \ppi(\gk_1)} F(x) \\
& = & \sum_{i=1}^r \Big( \sum_{x \in C_i} F(x) \Big) \\
& = & \sum_{i=1}^r \Big( \sum_{x \in C_i} F(\lambda_i^{-1} x) \Big) \\
& = & \sum_{i=1}^r \Big( \sum_{x \in \lambda_i^{-1} C_i} F(x)\Big) \\
& = & \sum_{x \in B} F(x).
\end{eqnarray*}
This completes the proof.
\end{proof}

\begin{proof}[Proof of Theorem~$\ref{thm scl equivalence}$]
The inequality $\scl_\hG(x) \le \scl_{\hG,\bG}(x)$ is obvious. Hence, it suffices to show $\scl_{\hG,\bG}(x) \le 2 \cdot \scl_\hG(x)$. It follows from Theorem \ref{main thm} that there exists a $\hG$-invariant homogeneous quasimorphism $f$ such that
\[
\scl_{\hG,\bG}(x) - \epsilon \le \frac{1}{2} \cdot \frac{f(x)}{D(f)}.
\]
It follows from Proposition \ref{prop 7.3} that there exists a quasimorphism $f'$ on $\hG$ such that $f'|_\bG = f$ and $D(f') = D(f)$. Let $\overline{f'}$ be the homogenization of $f$.
Then, it is known that $D(\overline{f'}) \le 2 D(f)$ (see \cite[Lemma 2.58]{Ca}).
Since $f$ is a homogeneous quasimorphism, $\overline{f'}|_\bG = f$.
Hence, Theorem \ref{main thm} implies that
\[
\scl_{\hG,\bG}(x) - \epsilon \le \frac{1}{2} \cdot \frac{f(x)}{D(f)} \le \frac{\overline{f'}(x)}{D(\overline{f'})} \le 2 \cdot \scl_\hG(x).
\]
Since $\epsilon$ is an arbitrary positive number, this completes the proof.
\end{proof}

\section{On equivalences of $\cl_\hG$ and $\cl_{\hG,\bG}$}\label{section=equi_cl}

Throughout this section, let $q \colon G \to G/N$ denote the projection.
The purpose in this section is to show some equivalence results concerning $\cl_{\hG,\bG}$ and $\cl_\hG$.
Namely, we show the following two theorems:

\begin{thm} \label{thm cl equivalence 1}
Assume that the projection $G \to \hG/\bG$ has a section homomorphism $s \colon \hG/\bG \to \hG$. Then for each $x \in [\hG,\bG]$, the following inequalities hold:
\[ \cl_\hG(x) \le  \cl_{\hG,\bG}(x) \le 3 \cdot \cl_\hG(x). \]
\end{thm}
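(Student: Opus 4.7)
The plan is to prove the upper bound $\cl_{\hG,\bG}(x) \le 3 \cdot \cl_\hG(x)$; the lower bound $\cl_\hG(x) \le \cl_{\hG,\bG}(x)$ is immediate, because every single $(\hG,\bG)$-commutator is in particular a commutator of two elements of $\hG$.

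First I would use the section $s \colon \hG/\bG \to \hG$ to realise $\hG$ as the internal semi-direct product $\hG = \bG \rtimes \Gamma$ with $\Gamma := s(\hG/\bG)$: every $g \in \hG$ admits a unique decomposition $g = \gamma_g n_g$ with $\gamma_g = s(\ppi(g)) \in \Gamma$ and $n_g \in \bG$. The key computational step is to apply the standard identities $[ab,c] = a[b,c]a^{-1} \cdot [a,c]$ and $[a,bc] = [a,b] \cdot b[a,c]b^{-1}$ to expand
\[
[g,h] = \bigl(\gamma_g[n_g,\gamma_h]\gamma_g^{-1}\bigr) \cdot \bigl(\gamma_g\gamma_h[n_g,n_h]\gamma_h^{-1}\gamma_g^{-1}\bigr) \cdot [\gamma_g,\gamma_h] \cdot \bigl(\gamma_h[\gamma_g,n_h]\gamma_h^{-1}\bigr).
\]
Because $\bG$ is normal in $\hG$, conjugation by any element of $\hG$ preserves the set of $(\hG,\bG)$-commutators, and the set of $(\hG,\bG)$-commutators is closed under inversion (so $[n_g,\gamma_h] = [\gamma_h,n_g]^{-1}$ is again a $(\hG,\bG)$-commutator); hence the first, second, and fourth parenthesised factors are each single $(\hG,\bG)$-commutators, whereas the ``error term'' $[\gamma_g,\gamma_h]$ lies in $[\Gamma,\Gamma] \subset \Gamma$.

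Next, writing $x = \prod_{i=1}^m [g_i,h_i]$ with $m = \cl_\hG(x)$ and applying this decomposition to every factor produces a product of the form $A_1 B_1 C_1 D_1 \cdots A_m B_m C_m D_m$ in which $A_i, B_i, D_i \in \bG$ are single $(\hG,\bG)$-commutators and $C_i := [\gamma_{g_i},\gamma_{h_i}] \in \Gamma$. I would then iteratively slide each $C_i$ rightward past the $\bG$-factors via $\gamma \cdot n = (\gamma n \gamma^{-1}) \cdot \gamma$, replacing each $\bG$-factor by a $\Gamma$-conjugate of itself; by normality of $\bG$, these conjugates remain single $(\hG,\bG)$-commutators. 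The outcome is
\[
x = n^{\ast} \cdot C_1 C_2 \cdots C_m,
\]
where $n^{\ast} \in \bG$ is an explicit product of exactly $3m$ single $(\hG,\bG)$-commutators and $C_1 \cdots C_m \in \Gamma$.

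The final step is the payoff of the hypothesis $x \in [\hG,\bG] \subseteq \bG$: since $n^{\ast} \in \bG$, uniqueness of the factorisation in $\hG = \bG \rtimes \Gamma$ forces $C_1 \cdots C_m \in \bG \cap \Gamma = \{e_\hG\}$, so $x = n^{\ast}$ and therefore $\cl_{\hG,\bG}(x) \le 3m = 3 \cdot \cl_\hG(x)$. The only real obstacle is the bookkeeping in the commutator expansion and verifying that $\Gamma$-conjugates of $(\hG,\bG)$-commutators are again $(\hG,\bG)$-commutators; both follow routinely from the normality of $\bG$ in $\hG$.
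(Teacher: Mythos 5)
Your proposal is correct and follows essentially the same strategy as the paper's proof: the paper's Lemma~\ref{lem 8.4} (in its $k=1$ case, with $\alpha=s(\ppi(f))$, $\beta=s(\ppi(g))$) is precisely your four-factor commutator expansion, producing three $(\hG,\bG)$-commutators plus a residual factor in $s(\hG/\bG)$, and in both arguments the residual vanishes because $x\in[\hG,\bG]\subset\bG$. The only cosmetic difference is bookkeeping: the paper collects the residuals by the inductive regrouping in Lemma~\ref{lem 8.4}, whereas you slide the $\Gamma$-factors rightward through the $\bG$-factors by conjugation.
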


\begin{thm} \label{thm cl equivalence 2}
Assume that $Q = \hG/\bG$ is a finite group. Then there exists a positive number $C$ such that for every $x \in [\hG,\bG]$, the following inequalities hold:
\[ \cl_\hG(x) \le \cl_{\hG,\bG}(x) \le C \cdot \cl_\hG(x). \]
\end{thm}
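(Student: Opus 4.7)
The inequality $\cl_\hG(x)\le\cl_{\hG,\bG}(x)$ is immediate since every $(\hG,\bG)$-commutator is, in particular, an $\hG$-commutator. For the upper bound, my plan is to fix a set-theoretic section $s\colon \hG/\bG\to \hG$ of $\ppi$ with $s(e_{\hG/\bG})=e_\hG$ and use it to expand each $\hG$-commutator appearing in a factorization of $x$ into a bounded number of $(\hG,\bG)$-commutators plus a single \emph{correction term} $\gamma(u,v):=[s(u),s(v)]$ that depends only on the classes $u,v\in\hG/\bG$ of the commutator's entries. These correction terms will be collected together via conjugation, and the resulting product will be controlled using the finiteness of $\hG/\bG$.

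Concretely, writing each $g\in \hG$ uniquely as $g=s(\ppi(g))\cdot\nu(g)$ with $\nu(g)\in \bG$ and applying the standard commutator identities $[ab,c]=(a[b,c]a^{-1})\cdot[a,c]$ and $[a,bc]=[a,b]\cdot(b[a,c]b^{-1})$, one checks the expansion
\[
[g,h]=A\cdot B\cdot\gamma(u,v)\cdot D,
\]
where $u=\ppi(g)$, $v=\ppi(h)$, and each of $A,B,D$ is an $\hG$-conjugate of a commutator of type $[\hg',\bg']$ with $\hg'\in\hG,\bg'\in\bG$ (hence itself a $(\hG,\bG)$-commutator, since $\bG$ is normal in $\hG$). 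Given a factorization $x=[g_1,h_1]\cdots[g_n,h_n]$ realizing $\cl_\hG(x)=n$, applying this expansion termwise and then using the identity $\beta\alpha=\alpha\beta\cdot[\beta^{-1},\alpha^{-1}]$ (valid for $\alpha\in\hG,\beta\in\bG$, with $[\beta^{-1},\alpha^{-1}]$ a $(\hG,\bG)$-commutator) iteratively to push the $(\hG,\bG)$-commutator factors past the correction terms, I arrive at a decomposition
\[
x=\Gamma\cdot R,
\]
where $R$ is a product of $O(n)$ $(\hG,\bG)$-commutators and $\Gamma=\gamma(u_1,v_1)\cdots\gamma(u_n,v_n)$. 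Since $x\in[\hG,\bG]$ and $R\in[\hG,\bG]$, we obtain $\Gamma\in[\hG,\bG]$.

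The hard part will be bounding $\cl_{\hG,\bG}(\Gamma)$ by $O(n)$ with a constant depending only on $\hG/\bG$; this is where the finiteness of $\hG/\bG$ is essential. The correction terms $\gamma(u,v)$ range over the finite set $S:=\{\gamma(u,v)\mid u,v\in \hG/\bG\}\subset\hG$, and $\Gamma$ is a product of $n$ elements of $S$ that happens to lie in $[\hG,\bG]$. To control this, I would pass to the quotient $\hG/[\hG,\bG]$ and exploit the fact that $\bG/[\hG,\bG]$ is \emph{central} in $\hG/[\hG,\bG]$ (since $[g,h]\in[\hG,\bG]$ for every $g\in\hG,h\in\bG$), producing a central extension $1\to \bG/[\hG,\bG]\to\hG/[\hG,\bG]\to\hG/\bG\to 1$ in which the commutator of two elements depends only on their images in $\hG/\bG$. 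The condition $\Gamma\in[\hG,\bG]$ thus becomes a null-relation in this extension over the finite alphabet $S$. Because $\hG/\bG$ is finite, any such null-relation admits a combinatorial filling (a derivation from a finite list of elementary relators, obtained from the finite presentations of $\hG/\bG$ and of the commutator pairing) whose length is proportional to $n$; lifting each step of the derivation gives a product of a bounded number of $(\hG,\bG)$-commutators, yielding $\cl_{\hG,\bG}(\Gamma)\le C'\cdot n$.

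Combining the two estimates produces $\cl_{\hG,\bG}(x)\le C\cdot\cl_\hG(x)$ with $C$ depending combinatorially on $|\hG/\bG|$ and on the chosen section. The fact that $C$ is obtained through such a filling procedure, rather than from a quasimorphism argument as in the proof of Theorem~\ref{scl eq}, is consistent with the paper's observation that the constant for $\cl$ may be much worse than the constant $2$ in the $\scl$ statement.
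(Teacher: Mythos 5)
Your decomposition step is sound and parallels the paper's Lemma~\ref{lem 8.4}: expanding each $[g_i,h_i]$ using the section $s$ and collecting the correction terms $\gamma(u_i,v_i)$ via conjugation (which does not increase the count of $(\hG,\bG)$-commutators, since $\bG$ is normal) indeed yields $x=\Gamma\cdot R$ with $\cl_{\hG,\bG}(R)=O(n)$ and $\Gamma=\gamma(u_1,v_1)\cdots\gamma(u_n,v_n)\in[\hG,\bG]$. Up to rearrangement this is exactly the paper's reduction, and the passage to the central extension $1\to \bG/[\hG,\bG]\to\hG/[\hG,\bG]\to Q\to 1$ is also the right lens.

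The gap is in the final step, where you assert that ``because $\hG/\bG$ is finite, any such null-relation admits a combinatorial filling \dots\ whose length is proportional to $n$.'' Finiteness of $Q$ does \emph{not} control $\hG/[\hG,\bG]$: the central subgroup $\bG/[\hG,\bG]$ is typically infinite, so a length-$n$ null word over $S$ need not be fillable in $O(n)$ elementary moves, and you have not specified what the ``elementary relators'' are or why a filling of them lifts to a bound on $\cl_{\hG,\bG}$. What is actually needed --- and what the paper proves --- is that the image $W(s)$ of $[\hG(s),\hG(s)]$ in $\hG/[\hG,\bG]$ is a \emph{finite} group, where $\hG(s)$ is the subgroup generated by $s(Q)$. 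This is Proposition~\ref{prop 8.5} together with Corollaries~\ref{warizan isom} and \ref{warizan finite}, which rest on the extension theorem Proposition~\ref{prop 7.3} (one could alternatively invoke Schur's theorem: $\hG(s)$ maps onto a finitely generated central-by-finite group, so its commutator subgroup has finite image). Once $W(s)$ is known to be finite, of order $M(s)$, the linear bound on $\cl_{\hG,\bG}(\Gamma)$ does not follow from a generic ``filling''; it follows from the pigeonhole Lemma~\ref{lem 8.12}: any length-$n$ word in $W(s)$ that equals the identity contains a consecutive subword of length at most $M(s)$ that already lies in $[\hG,\bG]$, whose $\cl_{\hG,\bG}$ is bounded by a constant $C(s)$ taken over a finite list, and iterated extraction by conjugation (the Claim in Theorem~\ref{thm 8.11}) produces the bound $\cl_{\hG,\bG}(\Gamma)\le C(s)\cdot n$. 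Both the finiteness of $W(s)$ and the extraction mechanism are missing from your argument, and the finiteness in particular is a nontrivial theorem, not a formal consequence of $|Q|<\infty$.
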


\begin{cor} \label{cor 8.3}
If $\hG/\bG$ is finite or the projection $\ppi \colon \hG \to \hG/\bG$ has a section homomorphism, then $\cl_{\hG,\bG}$ and $\cl_\hG$ are equivalent on $[\hG,\bG]$.
\end{cor}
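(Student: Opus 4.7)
The plan is to derive Corollary~\ref{cor 8.3} as an immediate consequence of Theorems~\ref{thm cl equivalence 1} and \ref{thm cl equivalence 2}, since these two theorems already supply two-sided inequalities comparing $\cl_{\hG,\bG}$ and $\cl_\hG$ on $[\hG,\bG]$. Thus the core of the corollary is bookkeeping: translating the inequalities furnished by the theorems into the form demanded by the definition of bi-Lipschitz equivalence.

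More concretely, I would argue as follows. Observe first that the inequality $\cl_\hG(x)\le \cl_{\hG,\bG}(x)$ is valid for every $x\in [\hG,\bG]$ with no extra hypothesis: indeed, every $(\hG,\bG)$-commutator $[h,g]$ with $h\in\bG$ and $g\in\hG$ is in particular a commutator in $\hG$, so any expression of $x$ as a product of $n$ $(\hG,\bG)$-commutators is automatically an expression as a product of $n$ commutators in $\hG$. Next, in the case when $\ppi\colon\hG\to\hG/\bG$ admits a section homomorphism, Theorem~\ref{thm cl equivalence 1} provides the matching upper bound $\cl_{\hG,\bG}(x)\le 3\cdot \cl_\hG(x)$; setting $C=3$, this gives the two-sided inequality $C^{-1}\cdot\cl_\hG(x)\le\cl_{\hG,\bG}(x)\le C\cdot\cl_\hG(x)$ for every $x\in[\hG,\bG]$, which is exactly the bi-Lipschitz equivalence on $[\hG,\bG]$.

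Similarly, when $\hG/\bG$ is finite, Theorem~\ref{thm cl equivalence 2} yields a constant $C_0>0$ with $\cl_{\hG,\bG}(x)\le C_0\cdot\cl_\hG(x)$. Taking $C=\max\{C_0,1\}$, we obtain $C^{-1}\cdot\cl_\hG(x)\le\cl_\hG(x)\le\cl_{\hG,\bG}(x)\le C\cdot\cl_\hG(x)$, again the required equivalence.

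Since the two cases exhaust the hypotheses of the corollary, this completes the proof. There is no real obstacle here; the substantive work is contained in Theorems~\ref{thm cl equivalence 1} and \ref{thm cl equivalence 2}, and the corollary is simply the packaging of their conclusions in the language of bi-Lipschitz equivalence.
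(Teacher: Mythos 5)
Your proposal is correct and is exactly the route the paper intends: Corollary~\ref{cor 8.3} is an immediate repackaging of Theorems~\ref{thm cl equivalence 1} and \ref{thm cl equivalence 2}, both of which already state the two-sided inequalities $\cl_\hG(x)\le\cl_{\hG,\bG}(x)\le C\cdot\cl_\hG(x)$ on $[\hG,\bG]$. Your extra remark justifying the lower inequality $\cl_\hG\le\cl_{\hG,\bG}$ is correct but not needed, since it is already part of the theorems' conclusions.
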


Later, we will generalize Theorem~\ref{thm cl equivalence 2} (see Theorem \ref{thm cl equivalence 3}).

Here we give some remarks on Theorem \ref{thm cl equivalence 2}. Recall that in the case of stable commutator lengths, the constant $C$ can be taken to be $2$ (Theorem \ref{thm scl equivalence}). On the other hand, our estimation of $C$ can be much larger. In fact, we do not know that there exists a constant $C$ independent from $\hG$ and $\bG$ such that $\cl_\hG \le C \cdot \cl_{\hG,\bG}$.

\begin{lem} \label{lem 8.4}
Let $\fk_1, \cdots, \fk_k, \gk_1, \cdots, \gk_k, \alpha_1, \cdots, \alpha_k, \beta_1, \cdots, \beta_k \in \hG$ with
\[
\ppi(\fk_i) = \ppi(\alpha_i)
\quad \textrm{and}\quad
\ppi(\gk_i) = \ppi(\beta_i)
\]
for each $ i \in \{1,\cdots,k\}$.
Then, $[\fk_1, \gk_1] \cdots [\fk_k, \gk_k] ([\alpha_1, \beta_1] \cdots [\alpha_k, \beta_k])^{-1}$ is contained in $[\hG,\bG]$. Moreover, the following inequality holds:
\[
\cl_{\hG,\bG}\Big( [\fk_1,\gk_1] \cdots [\fk_k,\gk_k] \big( [\alpha_1, \beta_1] \cdots [\alpha_k, \beta_k] \big)^{-1} \Big) \le 3k.
\]
\end{lem}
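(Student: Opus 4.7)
The plan is to first establish the case $k=1$ by explicit commutator calculus, and then to induct on $k$. For the base case, set $u := f_1 \alpha_1^{-1}$ and $v := g_1 \beta_1^{-1}$; the hypotheses $\ppi(f_1)=\ppi(\alpha_1)$ and $\ppi(g_1)=\ppi(\beta_1)$ give $u,v\in \bG$. Applying the standard identities $[ab,c]=a[b,c]a^{-1}\cdot[a,c]$ and $[a,bc]=[a,b]\cdot b[a,c]b^{-1}$ to $[u\alpha_1,v\beta_1]$ yields
\[
[u\alpha_1,v\beta_1] = u[\alpha_1,v]u^{-1} \cdot (uv)[\alpha_1,\beta_1](uv)^{-1} \cdot [u,v\beta_1].
\]
Rewriting the middle factor as $(uv)[\alpha_1,\beta_1](uv)^{-1} = [uv,[\alpha_1,\beta_1]]\cdot [\alpha_1,\beta_1]$ and multiplying by $[\alpha_1,\beta_1]^{-1}$, one obtains the three-term expression
\[
[f_1,g_1][\alpha_1,\beta_1]^{-1} = u[\alpha_1,v]u^{-1}\cdot [uv,[\alpha_1,\beta_1]]\cdot \bigl([\alpha_1,\beta_1][u,v\beta_1][\alpha_1,\beta_1]^{-1}\bigr).
\]

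Each of the three factors above is a single $(\hG,\bG)$-commutator: the second because $uv\in \bG$; the first and third because they are conjugates of $(\hG,\bG)$-commutators by elements of $\hG$, and by normality of $\bG$ one has $g[x,n]g^{-1}=[gxg^{-1},gng^{-1}]$ with $gng^{-1}\in \bG$. This settles the base case with bound $3$, and in particular shows that the element lies in $[\hG,\bG]$.

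For the inductive step, I factor
\[
[f_1,g_1]\cdots[f_k,g_k]\cdot\bigl([\alpha_1,\beta_1]\cdots[\alpha_k,\beta_k]\bigr)^{-1} = \bigl([f_1,g_1][\alpha_1,\beta_1]^{-1}\bigr)\cdot [\alpha_1,\beta_1]\, Y\, [\alpha_1,\beta_1]^{-1},
\]
where $Y=[f_2,g_2]\cdots[f_k,g_k]\cdot([\alpha_2,\beta_2]\cdots[\alpha_k,\beta_k])^{-1}$ is the analogous expression for $k-1$ pairs. The first parenthesized factor contributes at most $3$ $(\hG,\bG)$-commutators by the base case; by the inductive hypothesis $Y$ is a product of at most $3(k-1)$ such; and the normality observation above shows that conjugation by $[\alpha_1,\beta_1]\in \hG$ preserves this count term by term. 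The total is $3+3(k-1)=3k$. The main work is entirely in the base case---choosing the commutator identities so that exactly one uncancelled copy of $[\alpha_1,\beta_1]$ is produced and every remaining commutator already has one argument in $\bG$; the rearrangement $(uv)[\alpha_1,\beta_1](uv)^{-1}=[uv,[\alpha_1,\beta_1]]\cdot[\alpha_1,\beta_1]$ is precisely what makes this cancellation possible, after which the induction is essentially free.
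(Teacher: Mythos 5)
Your proof is correct and follows essentially the same strategy as the paper: induction on $k$, with the base case $k=1$ established by explicit commutator calculus (you factor $f_1 = u\alpha_1$, $g_1 = v\beta_1$ on the left, the paper writes $f = \alpha h_1$, $g = \beta h_2$ on the right, but both arrive at a product of three $(\hG,\bG)$-commutators up to conjugation), and the inductive step handled by peeling off one commutator and conjugating the remainder, invoking $\hG$-invariance of $\cl_{\hG,\bG}$ exactly as the paper does.
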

\begin{proof}
We prove this lemma by induction on $k$. The case $k = 0$ is obvious. Suppose $k = 1$, and set $f = \fk_1$, $g = \gk_1$, $\alpha = \alpha_1$, and $\beta = \beta_1$. Then, there exist $h_1, h_2 \in \bG$ such that $f = \alpha h_1$ and $g = \beta h_2$. Then we have
\begin{eqnarray*}
[\fk,\gk][\alpha, \beta]^{-1} & = & [\alpha h_1, \beta h_2] [\alpha, \beta]^{-1} \\
& = & \alpha h_1 \beta h_2 h_1^{-1} \alpha^{-1} h_2^{-1} \beta^{-1} \beta \alpha \beta^{-1} \alpha^{-1} \\
& = & \alpha h_1 \beta h_2 h_1^{-1}  \alpha^{-1} h_2^{-1} \alpha \beta^{-1} \alpha^{-1} \\
& = & \alpha \beta (\beta^{-1} h_1 \beta h_1^{-1} h_1 h_2 h_1^{-1} h_2^{-1} h_2 \alpha^{-1} h_2^{-1} \alpha) \beta^{-1} \alpha^{-1} \\
& = & (\alpha \beta) [\beta^{-1}, h_1] [h_1, h_2] [h_2, \alpha^{-1}] (\alpha \beta)^{-1}.
\end{eqnarray*}
Since $\cl_{\hG,\bG}$ is $\hG$-invariant, we have $\cl_{\hG,\bG}([\fk,\gk] [\alpha, \beta]^{-1}) \le 3$. Suppose $k \ge 2$, and set $\gamma = [\alpha_1, \beta_1] \cdots [\alpha_{k-1}, \beta_{k-1}]$. Then we have
\[[\fk_1, \gk_1] \cdots [\fk_k, \gk_k] ([\alpha_1, \beta_1] \cdots [\alpha_k, \beta_k])^{-1} = ([\fk_1, \gk_1] \cdots [\fk_{k-1}, \gk_{k-1}] \gamma^{-1}) \cdot (\gamma [\fk_k, \gk_k] [\alpha_k, \beta_k]^{-1} \gamma^{-1}).\]
By the $\hG$-invariance of $\cl_{\hG,\bG}$ and the inductive hypothesis, we conclude
\[\cl_{\hG,\bG} \Big( [\fk_1, \gk_1] \cdots [\fk_k, \gk_k] ([\alpha_1, \beta_1] \cdots [\alpha_k, \beta_k])^{-1}\Big) \le 3(k-1)+3=3k. \qedhere\]
\end{proof}

\begin{proof}[Proof of Theorem~$\ref{thm cl equivalence 1}$]
Let $s \colon \hG/\bG \to \hG$ be a section homomorphism of $\ppi \colon \hG \to \hG/\bG$. Then, it follows from Lemma \ref{lem 8.4} that $\cl_\hG(x \cdot s \circ \ppi(x)^{-1}) \le 3 \cdot \cl_{\hG,\bG}(x)$. In particular, if $x \in [\hG,\bG] \subset \bG$, then $\ppi(x) = 1_\hG$ and hence
\[ \cl_\hG(x) = \cl_\hG(x \cdot s\circ \ppi(x)^{-1}) \le 3 \cdot \cl_{\hG,\bG}(x). \qedhere \]
\end{proof}

Next we prove Theorem \ref{thm cl equivalence 2}. For the proof, we need the following proposition:

\begin{prop} \label{prop 8.5}
If $\hG$ is finitely generated and $\hG / \bG$ is finite, then the group $[\hG,\hG] / [\hG,\bG]$ is finite.
\end{prop}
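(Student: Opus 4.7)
The plan is to pass to the quotient $\Gamma := G/[G,N]$ and apply Schur's classical theorem: if the center of a group has finite index, then the commutator subgroup is finite.

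First, I would verify that $A := N/[G,N]$ lies in the center $Z(\Gamma)$. This is immediate from the definition of $[G,N]$: for every $g \in G$ and $h \in N$, the commutator $[g,h]$ lies in $[G,N]$, so the images of $g$ and $h$ commute in $\Gamma$. Consequently, $\Gamma/Z(\Gamma)$ is a quotient of $\Gamma/A \cong G/N$, which is finite by hypothesis. Schur's theorem then gives that $[\Gamma, \Gamma]$ is finite. Since $[G,N] \subseteq [G,G]$, the correspondence theorem identifies $[\Gamma, \Gamma]$ with $[G,G]/[G,N]$, as desired.

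To make the argument self-contained (unpacking Schur's theorem in our setting), one can proceed as follows. Because $A$ is central, a direct computation gives $[xa, yb] = [x, y]$ for all $a, b \in A$, so commutators in $\Gamma$ depend only on the images of $x, y$ in $Q := G/N$, yielding at most $|Q|^2$ distinct commutators; hence $[\Gamma, \Gamma]$ is finitely generated. Next, $[\Gamma, \Gamma]/([\Gamma, \Gamma] \cap A)$ embeds into $[Q,Q]$ and is therefore finite, so by Schreier's lemma $[\Gamma, \Gamma] \cap A$ is finitely generated, and it is abelian as a subgroup of $A$. Finally, the transfer (Verlagerung) map $V \colon \Gamma \to A$ satisfies $V(a) = a^{|Q|}$ for $a \in A$ and vanishes on $[\Gamma, \Gamma]$ (since $A$ is abelian); thus every $z \in [\Gamma, \Gamma] \cap A$ satisfies $z^{|Q|} = e_\Gamma$, making $[\Gamma, \Gamma] \cap A$ a finitely generated abelian group of finite exponent, hence finite. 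Combined with the previous step, $[\Gamma, \Gamma]$ is finite.

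The main obstacle is the last step, namely controlling the torsion of $[\Gamma, \Gamma] \cap A$; the transfer map is the essential tool here, and its use in this context is precisely what is needed to extract Schur's theorem. The finite-generation hypothesis on $G$ is convenient to ensure that $N$, and hence $A$, is finitely generated via Reidemeister--Schreier, which streamlines some parts of the exposition; however, the conclusion holds even without assuming $G$ is finitely generated, since $[\Gamma, \Gamma]$ is already finitely generated by the first observation of the self-contained proof.
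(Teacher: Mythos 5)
Your proof is correct, and it takes a genuinely different route from the paper's. The paper proves Proposition 8.5 as a consequence of its extension machinery: it applies the (virtual-splitting) extension theorem (Proposition 7.3) to $\hG$-invariant homomorphisms to show that the restriction map $\Hom(\hG/[\hG,\hG],\RR) \to \Hom(\bG/[\hG,\bG],\RR)$ is surjective, deduces that $\Hom(K,\RR)=0$ for $K = ([\hG,\hG]\cap\bG)/[\hG,\bG]$, and then uses the finite generation of $\bG$ (from finite generation of $\hG$ and finiteness of $\hG/\bG$) to conclude that the finitely generated abelian group $K$ has no $\ZZ$ summand and is therefore finite; combining with the finiteness of $[\hG,\hG]/([\hG,\hG]\cap\bG) \hookrightarrow \hG/\bG$ gives the result. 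Your argument instead passes to $\Gamma = \hG/[\hG,\bG]$, notes that $A = \bG/[\hG,\bG]$ is central with $\Gamma/A \cong \hG/\bG$ finite, and invokes Schur's theorem (center of finite index implies finite commutator subgroup) to get $[\Gamma,\Gamma] = [\hG,\hG]/[\hG,\bG]$ finite; your unpacking via the transfer map is the standard proof of Schur's theorem and is correct. Your approach is more elementary and self-contained, relying on classical group theory rather than the paper's functional-analytic extension results, and as you observe it actually dispenses with the finite-generation hypothesis on $\hG$ entirely (since the commutators in $\Gamma$ depend only on cosets mod $A$, $[\Gamma,\Gamma]$ is automatically finitely generated); this makes it a strictly stronger and cleaner argument. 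The only thing it sacrifices is the paper's incidental re-use of Proposition 7.3, which the authors presumably favoured for economy of exposition.
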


In what follows, we will prove Proposition~\ref{prop 8.5}. The proof of the following proposition is straightforward, and hence omitted:

\begin{prop}\label{77}
Let $\Gamma$ be a group and $f \colon \bG \to \Gamma$ a group homomorphism. Then $f$ is $\ad$-invariant if and only if $[\hG,\bG] \subset \mathrm{Ker}(f)$.
\end{prop}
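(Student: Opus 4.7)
The plan is to unpack both sides of the equivalence in terms of the generating commutators of $[\hG,\bG]$ and use that $f$ is a homomorphism. This is essentially a one-line proof once the right rewriting is observed, so the main ``obstacle'' is simply being careful with definitions; no deep machinery is required.

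First I would recall that, by definition, $f$ is $\ad$-invariant if and only if
\[
f(\hg x \hg^{-1}) = f(x) \quad \text{for every } g \in \hG \text{ and every } x \in \bG.
\]
Since $f \colon \bG \to \Gamma$ is a group homomorphism and $\hg x \hg^{-1} \in \bG$ (as $\bG$ is normal in $\hG$), the displayed condition can be rewritten using the identity $f(\hg x \hg^{-1}) f(x)^{-1} = f(\hg x \hg^{-1} x^{-1}) = f([\hg,x])$. Hence $\ad$-invariance of $f$ is equivalent to
\[
f([\hg,x]) = e_\Gamma \quad \text{for every } g \in \hG \text{ and every } x \in \bG,
\]
i.e., every single $(\hG,\bG)$-commutator lies in $\mathrm{Ker}(f)$.

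Now I would invoke the definition of $[\hG,\bG]$: it is the subgroup of $\hG$ (in fact, of $\bG$) generated by the single $(\hG,\bG)$-commutators $[\hg, x]$ with $g \in \hG$ and $x \in \bG$. Since $\mathrm{Ker}(f)$ is a subgroup of $\bG$ and $f$ is a homomorphism, the inclusion $[\hG,\bG] \subset \mathrm{Ker}(f)$ is equivalent to the statement that every generator $[\hg,x]$ lies in $\mathrm{Ker}(f)$. Combining these two equivalences yields the claim. The harder direction (if there is one) is the forward implication, but it is immediate from the rewriting above; the reverse direction is even more transparent, since $[\hG,\bG] \subset \mathrm{Ker}(f)$ trivially implies $f([\hg,x]) = e_\Gamma$ and hence $f(\hg x \hg^{-1}) = f(x)$ by reversing the computation. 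No further technical work is needed.
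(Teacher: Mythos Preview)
Your proposal is correct. The paper omits the proof as ``straightforward,'' and your argument is precisely the direct unpacking of definitions that justifies this omission.
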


For a pair $A$ and $B$ of abelian groups, we write $\Hom(A,B)$ to mean the abelian group of homomorphisms from $A$ to $B$.
Proposition \ref{77} implies the following corollary.

\begin{cor}
$\HH^1(\bG)^{\hG} = \Hom(\bG / [\hG,\bG], \RR)$.
\end{cor}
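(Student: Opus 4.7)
My plan is to deduce this corollary as a direct consequence of Proposition~\ref{77}, after first upgrading $\qad$-invariance to genuine $\ad$-invariance in the homomorphism setting. The key observation is that a group homomorphism into $\RR$ (or any torsion-free abelian group) cannot satisfy only a bounded quasi-invariance condition without being honestly invariant.

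First, I would show that every $f \in \HH^1(\bG)^{\hG}$ is actually $\ad$-invariant. Fixing $g \in \hG$ and $x \in \bG$, set $c := f(\hg x \hg^{-1}) - f(x) \in \RR$. Because $f$ is a homomorphism and $\hg x^n \hg^{-1} = (\hg x \hg^{-1})^n$, substituting $x^n$ for $x$ yields
\[
f(\hg x^n \hg^{-1}) - f(x^n) = n \bigl( f(\hg x \hg^{-1}) - f(x) \bigr) = nc.
\]
The $\qad$-invariance bound forces $|nc| \le D'(f)$ for every positive integer $n$, hence $c=0$. So every $\qad$-invariant homomorphism is automatically $\ad$-invariant.

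Next, I would apply Proposition~\ref{77} with $\Gamma = \RR$: an $\ad$-invariant homomorphism $f \colon \bG \to \RR$ is precisely one whose kernel contains $[\hG,\bG]$. By the universal property of the quotient, such homomorphisms are in natural bijection with group homomorphisms $\bG/[\hG,\bG] \to \RR$, i.e., with elements of $\Hom(\bG/[\hG,\bG], \RR)$, giving the claimed equality.

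There is no serious obstacle here; the only subtle point is the torsion-freeness of $\RR$ used in the upgrade from $\qad$-invariance to $\ad$-invariance, and once that is in hand the corollary follows immediately from Proposition~\ref{77}.
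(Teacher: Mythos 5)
Your proof is correct and follows essentially the same route as the paper: reduce to Proposition~\ref{77} after observing that a $\qad$-invariant homomorphism is automatically $\ad$-invariant. The upgrade step you derive by hand (using $f(\hg x^n \hg^{-1}) - f(x^n) = nc$ and letting $n \to \infty$) is exactly the specialization to homomorphisms of the paper's Lemma~\ref{lem 3.1}, which already records that every homogeneous $\qad$-invariant quasimorphism has $D'(f)=0$; citing that lemma would have saved the computation, but the argument is the same.
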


\begin{cor}\label{warizan isom}
Assume that the exact sequence $1\to \bG \to \hG \to \hG/\bG \to 1$ virtually splits.
Then the map $p^\ast\colon\Hom(\bG / ([\hG,\hG] \cap \bG), \RR) \to \Hom(\bG / [\hG,\bG] , \RR)$ induced by the natural projection $p \colon \bG /[\hG,\bG] \to \bG / ([\hG,\hG] \cap \bG)$ is an isomorphism.
\end{cor}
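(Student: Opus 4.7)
The plan is to invoke the extension theorem for $\hG$-invariant quasimorphisms (Proposition~\ref{prop 7.3}), which is the one place where virtual splitting is used.

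First, injectivity of $p^\ast$ is immediate from the surjectivity of $p$, which is clear since $p$ is induced by the identity on $\bG$ via the inclusion $[\hG,\bG]\subset [\hG,\hG]\cap \bG$. The substantive statement is the surjectivity of $p^\ast$, which amounts to showing that every $\varphi\in\Hom(\bG/[\hG,\bG],\RR)$ vanishes on $([\hG,\hG]\cap \bG)/[\hG,\bG]$; equivalently, that $\varphi([\hG,\hG]\cap \bG)=0$ whenever $\varphi\colon \bG\to\RR$ is a homomorphism trivial on $[\hG,\bG]$.

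Here is how I would carry this out. By Proposition~\ref{77}, any $\varphi\in\Hom(\bG/[\hG,\bG],\RR)$ is the same as a $\hG$-invariant homomorphism $\varphi\colon \bG\to\RR$; in particular $D(\varphi)=0$, and since $\varphi$ is homogeneous, Lemma~\ref{lem 3.1} gives $D'(\varphi)=0$ (so $\varphi$ is genuinely $\hG$-invariant, not merely $\hG$-quasi-invariant). Now apply Proposition~\ref{prop 7.3}, which requires precisely the virtual splitting hypothesis: there exists an extension $\varphi'\colon \hG\to\RR$ with $\varphi'|_\bG=\varphi$ and
\[
D(\varphi')\le D(\varphi)+3D'(\varphi)=0.
\]
Thus $\varphi'$ is itself a homomorphism $\hG\to\RR$, so $\varphi'$ vanishes on $[\hG,\hG]$. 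Restricting to $\bG$, we deduce that $\varphi$ vanishes on $[\hG,\hG]\cap \bG$, which is exactly what we needed. Consequently $\varphi$ factors through $\bG/([\hG,\hG]\cap \bG)$, providing a preimage under $p^\ast$.

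I do not anticipate a real obstacle, since the surjectivity boils down to the defect bound $D(\varphi')\le D(\varphi)+3D'(\varphi)$ in Proposition~\ref{prop 7.3}: when $\varphi$ is a $\hG$-invariant homomorphism both terms on the right vanish, automatically forcing $\varphi'$ to be a homomorphism. The only thing to double-check in writing it up is the consistent use of the previous corollary identifying $\HH^1(\bG)^\hG$ with $\Hom(\bG/[\hG,\bG],\RR)$, and the observation from Lemma~\ref{lem 3.1} that, for the homogeneous quasimorphism $\varphi$, $\hG$-quasi-invariance upgrades to genuine $\hG$-invariance.
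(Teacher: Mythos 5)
Your proof is correct and follows essentially the same route as the paper's: both rely on Proposition~\ref{prop 7.3} (the extension theorem, which is where virtual splitting enters) to extend a $\hG$-invariant homomorphism $\bG\to\RR$ to a genuine homomorphism $\hG\to\RR$, and both deduce injectivity of $p^\ast$ from the surjectivity of $p$. The only cosmetic difference is that the paper phrases surjectivity via a factoring of $\Hom(\hG/[\hG,\hG],\RR)\to\Hom(\bG/[\hG,\bG],\RR)$ through $\Hom(\bG/([\hG,\hG]\cap\bG),\RR)$, whereas you argue directly that $\varphi$ vanishes on $[\hG,\hG]\cap\bG$; also, your appeal to Lemma~\ref{lem 3.1} is unnecessary since Proposition~\ref{77} already gives $D'(\varphi)=0$ outright, but this is harmless.
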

\begin{proof}
By Proposition \ref{prop 7.3}, every $\ad$-invariant homomorphism $\bG \to \RR$ can be extended to $\hG$ as a homomorphism.
Hence the map $\Hom(G/ [\hG,\hG], \RR) \to \Hom(\bG / [\hG,\bG] , \RR)$ is surjective.
Since this map is a composition of
\[
\Hom(G/ [\hG,\hG], \RR) \to \Hom(\bG / ([\hG,\hG] \cap \bG), \RR) \xrightarrow{p^{\ast}} \Hom(\bG / [\hG,\bG], \RR),
\]
we have that $p^{\ast}$ is surjective. Also, since $p$ is surjective, $p^{\ast}$ is injective.
\end{proof}

\begin{cor}\label{warizan finite}
If $\bG$ is a finitely generated group and the exact sequence $1\to \bG \to \hG \to \hG/\bG \to 1$ virtually splits,
then the group $([\hG,\hG] \cap \bG) / [\hG,\bG]$ is finite.
\end{cor}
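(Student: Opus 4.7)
The plan is to leverage Corollary \ref{warizan isom} together with the structure theorem for finitely generated abelian groups. First, I would observe that since $[\bG, \bG] \subset [\hG, \bG]$, the quotient $A := \bG / [\hG, \bG]$ is abelian; as $\bG$ is finitely generated, so is $A$. Likewise $B := \bG / ([\hG,\hG] \cap \bG)$ is a finitely generated abelian group, and the natural projection $p \colon A \to B$ is surjective with kernel precisely the group of interest,
\[ K := ([\hG,\hG] \cap \bG) / [\hG, \bG]. \]

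Next, $K$ is finitely generated (as a subgroup of the finitely generated abelian group $A$), so it suffices to show that its torsion-free rank vanishes, where by rank I mean $\dim_\RR(M \otimes_\ZZ \RR)$. From the short exact sequence $0 \to K \to A \to B \to 0$ of finitely generated abelian groups, the ranks add: $\mathrm{rank}(A) = \mathrm{rank}(K) + \mathrm{rank}(B)$.

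Finally, I would invoke Corollary \ref{warizan isom}, which under the virtual splitting hypothesis provides an isomorphism $p^\ast \colon \Hom(B, \RR) \to \Hom(A, \RR)$. For a finitely generated abelian group $M$, the $\RR$-vector space $\Hom(M, \RR)$ has dimension equal to $\mathrm{rank}(M)$. Hence $\mathrm{rank}(A) = \mathrm{rank}(B)$, so $\mathrm{rank}(K) = 0$. Being finitely generated with trivial rank, $K$ is a finite torsion group, as required. The substantive input is Corollary \ref{warizan isom} (which itself rests on the extension theorem Proposition \ref{prop 7.3}); the remaining step is just a standard rank count, so no genuine obstacle arises here.
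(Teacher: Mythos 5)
Your proposal is correct and follows essentially the same route as the paper: both identify $K = ([\hG,\hG]\cap\bG)/[\hG,\bG]$ as the kernel of the projection $p$ and then invoke Corollary~\ref{warizan isom} to conclude. The only cosmetic difference is the finishing step: the paper uses the injectivity of $\RR$ to extract $\Hom(K,\RR)=0$ directly from the $\Hom$-exact sequence and then observes that an infinite finitely generated abelian $K$ would have a $\ZZ$ summand, whereas you count ranks via $\dim_\RR\Hom(-,\RR)$ and additivity of rank in short exact sequences; these are two faces of the same elementary fact about finitely generated abelian groups.
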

\begin{proof}
Set $K = ([\hG,\hG] \cap \bG) / [\hG,\bG]$. Then, $K$ is the kernel of the projection $p \colon \bG / [\hG,\bG] \to \bG /([\hG,\hG] \cap \bG)$. Since $\RR$ is an injective abelian group, we have the following short exact sequence:
\[
0 \to \Hom(\bG / ([\hG,\hG] \cap \bG), \RR) \xrightarrow{p^{\ast}} \Hom(\bG / [\hG,\bG], \RR) \to \Hom(K, \RR) \to 0.
\]
Since $p^{\ast}$ is an isomorphism (Corollary \ref{warizan isom}), we have $\Hom(K, \RR) = 0$.

Since $\bG$ is finitely generated, $\bG / [\hG,\bG]$ is a finitely generated abelian group. Since $K \subset \bG / [\hG,\bG]$, we have that $K$ is a finitely generated abelian group. If $K$ is not finite, then $K$ has $\ZZ$ as a direct summand, and hence $\Hom(K, \RR) \ne 0$.
This is a contradiction.
\end{proof}

\begin{proof}[Proof of Proposition~$\ref{prop 8.5}$]
Since $\hG$ is finitely generated and $\hG/\bG$ is finite, we have that $\bG$ is finitely generated.
Hence, Corollary \ref{warizan finite} implies that $([\hG,\hG] \cap \bG)/ [\hG,\bG]$ is finite.
Since $[\hG,\hG] / ([\hG,\hG] \cap \bG)$ embeds into $\hG/\bG$, it is a finite group.
Therefore, we obtain the conclusion.
\end{proof}

We are now ready to
define the constant  $C$ in Theorem \ref{thm cl equivalence 2}.
In the rest of this paper, we write $Q$ 
for $\hG/\bG$, and assume that $Q$ is a finite group. Let $s \colon Q \to \hG$ be a set-theoretic section of $\ppi \colon \hG \to Q$, and $\hG(s)$ the subgroup of $\hG$ generated by $s(Q)$.

\begin{lem}
The group $W(s) = [\hG(s), \hG(s)] / ([\hG(s), \hG(s)] \cap [\hG,\bG])$ is finite.
\end{lem}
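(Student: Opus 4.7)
The plan is to reduce the finiteness of $W(s)$ to Proposition~\ref{prop 8.5} applied to the pair $(\hG(s),\, \hG(s)\cap \bG)$, and then compare $[\hG(s),\hG(s)\cap \bG]$ with $[\hG(s),\hG(s)]\cap[\hG,\bG]$.

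First I would verify the hypotheses of Proposition~\ref{prop 8.5} for this pair. Since $Q$ is finite, the generating set $s(Q)$ of $\hG(s)$ is finite, so $\hG(s)$ is finitely generated. The subgroup $\hG(s)\cap\bG$ is normal in $\hG(s)$ because $\bG$ is normal in $\hG$, and the map $\hG(s)/(\hG(s)\cap \bG) \hookrightarrow \hG/\bG = Q$ induced by $\ppi$ shows that $\hG(s)/(\hG(s)\cap \bG)$ is finite (indeed it equals $Q$, since $\ppi\circ s=\mathrm{id}_Q$). Applying Proposition~\ref{prop 8.5} to the pair $(\hG(s),\, \hG(s)\cap \bG)$, the quotient
\[
[\hG(s),\hG(s)]\,\big/\,[\hG(s),\,\hG(s)\cap \bG]
\]
is finite.

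Next I would observe the inclusion $[\hG(s),\,\hG(s)\cap \bG] \subset [\hG(s),\hG(s)]\cap[\hG,\bG]$. This holds because $[\hG(s),\,\hG(s)\cap \bG]$ is contained in $[\hG(s),\hG(s)]$ by the first factor and in $[\hG,\bG]$ by the second factor (as $\hG(s)\cap \bG\subset \bG$ and $\hG(s)\subset \hG$). Consequently, there is a surjection
\[
[\hG(s),\hG(s)]\,\big/\,[\hG(s),\,\hG(s)\cap \bG] \;\twoheadrightarrow\; W(s),
\]
so $W(s)$ is a quotient of a finite group, hence finite.

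There is no genuine obstacle here beyond making sure the application of Proposition~\ref{prop 8.5} is legitimate; the only subtle point is verifying that $\hG(s)\cap \bG$ is the ``right'' normal subgroup to use, which is immediate since $\hG(s)/(\hG(s)\cap \bG)$ injects into the finite group $Q$. The remaining steps are purely formal.
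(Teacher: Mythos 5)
Your proof is correct and follows essentially the same route as the paper: set $\bG(s)=\hG(s)\cap\bG$, apply Proposition~\ref{prop 8.5} to the pair $(\hG(s),\bG(s))$, and use the inclusion $[\hG(s),\bG(s)]\subset[\hG(s),\hG(s)]\cap[\hG,\bG]$ to conclude that $W(s)$ is a quotient of the finite group $[\hG(s),\hG(s)]/[\hG(s),\bG(s)]$. Your extra observation that $\hG(s)/\bG(s)\cong Q$ (not merely embeds into it) is a small refinement but not needed.
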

\begin{proof}
Set $\bG(s) = \hG(s) \cap \bG$. Since $\hG(s) / \bG(s)$ can be embedded into $\hG/\bG$, we have that $\hG(s) / \bG(s)$ is a finite group. Since $\hG(s)$ is finitely generated, it follows from Proposition \ref{prop 8.5} that $[\hG(s), \hG(s)] / [\hG(s), \bG(s)]$ is finite. Since $[\hG(s), \bG(s)] \subset [\hG(s), \hG(s)] \cap [\hG,\bG]$, we have that $[\hG(s), \hG(s)] / [\hG(s), \hG(s)] \cap [\hG,\bG]$ is finite.
\end{proof}

Set $M(s) = \# W(s)$.
Define a non-negative number $C(s)$ to be the maximum of the following finite subset of $\RR$:
\[
\left\{ \frac{\cl_{\hG,\bG}([\alpha_1, \beta_1] \cdots [\alpha_k, \beta_k])}{k} \; \middle| \; 1 \le k \le M(s), \alpha_1, \cdots , \beta_k \in s(Q), [\alpha_1, \beta_1] \cdots [\alpha_k, \beta_k] \in [\hG,\bG] \right\}
\]
Then, Theorem \ref{thm cl equivalence 2} immediately follows from the following theorem.

\begin{thm}[Precise form of Theorem \ref{thm cl equivalence 2}] \label{thm 8.11}
Assume that $Q = \hG/\bG$ is finite, and let $s \colon Q \to \hG$ be a set-theoretic section of $\ppi \colon \hG \to Q$. Then, for every $x \in [\hG,\bG]$, the inequality $\cl_{\hG,\bG}(x) \le (C(s) + 3) \cdot \cl_\hG(x)$ holds.
\end{thm}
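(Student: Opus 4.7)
The plan is to reduce the theorem to a bound on $\cl_{\hG,\bG}$ of products of $s(Q)$-commutators, and then to handle those products by a pigeonhole argument on partial products inside the finite group $W(s)$.

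\textbf{Reduction step.} Take $x\in [\hG,\bG]$ with $\cl_\hG(x)=n$ and write $x=[\fk_1,\gk_1]\cdots[\fk_n,\gk_n]$. Set $\alpha_i:=s(\ppi(\fk_i))$ and $\beta_i:=s(\ppi(\gk_i))$, both in $s(Q)\subset \hG(s)$; these satisfy $\ppi(\alpha_i)=\ppi(\fk_i)$ and $\ppi(\beta_i)=\ppi(\gk_i)$. Writing $P:=[\alpha_1,\beta_1]\cdots[\alpha_n,\beta_n]$, Lemma~\ref{lem 8.4} gives $\cl_{\hG,\bG}(x\cdot P^{-1})\le 3n$, and in particular $P=(x\cdot P^{-1})^{-1}x\in [\hG,\bG]$. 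Hence
\[
\cl_{\hG,\bG}(x)\le \cl_{\hG,\bG}(x\cdot P^{-1}) + \cl_{\hG,\bG}(P) \le 3n + \cl_{\hG,\bG}(P),
\]
so it suffices to establish
\[
\cl_{\hG,\bG}(P)\le C(s)\cdot n. \quad (\star)
\]

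\textbf{Proof of $(\star)$ by strong induction on $n$.} Set $y_k:=[\alpha_k,\beta_k]\in [\hG(s),\hG(s)]$ and $P_k:=y_1\cdots y_k$, so $P_0=e$ and $P_n=P$; each $P_k$ has a class $[P_k]\in W(s)$. A direct check gives $P_i^{-1}P_j=y_{i+1}\cdots y_j$, and this belongs to $[\hG,\bG]$ precisely when $[P_i]=[P_j]$. If $n\le M(s)$, then $P$ itself is a product of $n$ commutators from $s(Q)$ lying in $[\hG,\bG]$, and the definition of $C(s)$ yields $(\star)$ immediately. Otherwise $n>M(s)$, and applying pigeonhole to the $M(s)+1$ classes $[P_0],[P_1],\dots,[P_{M(s)}]$ in the $M(s)$-element set $W(s)$ produces indices $0\le i<j\le M(s)$ with $[P_i]=[P_j]$; setting $k:=j-i$ and $A:=y_{i+1}\cdots y_j$, one has $1\le k\le M(s)$, $A\in [\hG,\bG]$, and $\cl_{\hG,\bG}(A)\le C(s)\cdot k$ by the definition of $C(s)$.

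\textbf{Conjugate-and-recurse.} From the rearrangement
\[
P \;=\; P_i\cdot A\cdot (y_{j+1}\cdots y_n) \;=\; (P_i A P_i^{-1})\cdot P', \qquad P':=P_i\cdot y_{j+1}\cdots y_n,
\]
$P'$ is itself a product of $n-k$ commutators of the form $[\alpha,\beta]$ with $\alpha,\beta\in s(Q)$, and the normality of $[\hG,\bG]$ in $\hG$ forces $P'\in [\hG,\bG]$. The $\hG$-invariance of $\cl_{\hG,\bG}$ together with the induction hypothesis applied to $P'$ gives
\[
\cl_{\hG,\bG}(P) \le \cl_{\hG,\bG}(A) + \cl_{\hG,\bG}(P') \le C(s)\cdot k + C(s)\cdot (n-k) = C(s)\cdot n,
\]
establishing $(\star)$. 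Combining with the reduction step yields $\cl_{\hG,\bG}(x)\le (C(s)+3)\cdot \cl_\hG(x)$.

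The only genuinely delicate point is the induction step: one cannot in general partition $P$ into consecutive blocks of length at most $M(s)$ each lying in $[\hG,\bG]$, because the sequence of classes $[P_0],[P_1],\dots,[P_n]$ in $W(s)$ may avoid the class $[e]$ for long stretches. Excising a single short block obtained via pigeonhole and recombining the remainder through a conjugation—so as to keep the resulting shorter word in the same ``$s(Q)$-commutator product'' form—is precisely what makes the induction go through, at the cost of the constant $C(s)$ rather than $1$.
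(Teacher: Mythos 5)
Your proof is correct and follows essentially the same route as the paper: reduce via Lemma~\ref{lem 8.4} to bounding $\cl_{\hG,\bG}$ of the product $P$ of $s(Q)$-commutators, apply the pigeonhole principle to the partial products in the finite quotient $W(s)$ (this is exactly what the paper's Lemma~\ref{lem 8.12} encodes), excise the resulting short block in $[\hG,\bG]\cap[\hG(s),\hG(s)]$, conjugate it out, and recurse on the shorter word. The only difference is cosmetic: the paper packages the recursion as a decomposition claim $y=r_1^{a_1}\cdots r_l^{a_l}$, while you run the strong induction directly on $\cl_{\hG,\bG}(P)$.
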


We will employ the following lemma for the proof of Theorem~\ref{thm 8.11}:
\begin{lem} \label{lem 8.12}
Let $W$ be a finite group, set $M = \# W$, and let $w_1, \cdots, w_M \in W$. Then there exists a pair $(i,j)$ of elements in $\{ 1, \cdots, M\}$ such that $i \le j$ and $w_i w_{i+1} \cdots w_j = e_W$.
\end{lem}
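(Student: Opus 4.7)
The plan is to prove Lemma \ref{lem 8.12} by a standard pigeonhole argument on partial products.

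First, I would introduce the partial products
\[
p_0 = e_W, \quad p_k = w_1 w_2 \cdots w_k \quad \text{for } k = 1, 2, \ldots, M.
\]
This gives a list of $M+1$ elements $p_0, p_1, \ldots, p_M$ living in the group $W$, which has exactly $M$ elements. By the pigeonhole principle, two of these partial products must coincide: there exist indices $0 \le a < b \le M$ with $p_a = p_b$.

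From $p_a = p_b$, setting $i = a+1$ and $j = b$, I would conclude
\[
w_i w_{i+1} \cdots w_j = p_a^{-1} p_b = e_W.
\]
Since $a < b$, we have $i = a+1 \le b = j$, and clearly $1 \le i$ and $j \le M$, so the pair $(i,j)$ satisfies the required conditions.

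The argument is entirely elementary and contains no real obstacle; the only point worth care is the index bookkeeping to ensure $i \le j$ (rather than $i < j$ or $i \le j-1$), which is handled by taking $i = a+1$ from the strict inequality $a < b$. No further structure of $W$ beyond its cardinality is needed.
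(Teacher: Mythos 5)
Your proof is correct and is essentially identical to the paper's: both define the $M+1$ partial products $p_0 = e_W, p_1, \ldots, p_M$, apply the pigeonhole principle to find $a < b$ with $p_a = p_b$, and set $i = a+1$, $j = b$.
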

\begin{proof}
For each $k \in \{ 0,1, \cdots, M\}$, set $v_k = w_1 \cdots w_k$. In particular, we have $v_0 = e_W$.
Then, the pigeonhole principle implies that there exists a pair $(i', j)$ of elements in $\{ 0,1, \cdots, M\}$ such that $i' < j$ and $v_{i'} = v_j$.
By setting $i = i' + 1$, we have completed the proof.
\end{proof}

We are now ready to prove Theorem \ref{thm 8.11}.

\begin{proof}[Proof of Theorem~$\ref{thm 8.11}$]
Let $x \in [\hG,\bG]$ and set $n = \cl_\hG(x)$. Then, there exist $\fk_1, \cdots, \fk_n, \gk_1, \cdots, \gk_n \in \hG$ such that $x = [\fk_1, \gk_1] \cdots [\fk_n, \gk_n]$. Set $\alpha_i = s \circ \ppi (\fk_i)$, $\beta_i = s \circ \ppi (\gk_i)$, $w_i = [\alpha_i, \beta_i]$, and $y = w_1 \cdots w_n = [\alpha_1, \beta_1] \cdots [\alpha_n, \beta_n]$. It follows from Lemma \ref{lem 8.4} that $xy^{-1} \in [\hG,\bG]$ and $\cl_{\hG,\bG} (xy^{-1}) \le 3n$.
Note that $x, xy^{-1} \in [\hG,\bG]$ implies that $y \in [\hG,\bG]$.
Hence it suffices to show
\[
\cl_{\hG,\bG}(y) \le C(s) \cdot n.
\]
To show this, we use the following claim:

\vspace{2mm} \noindent
{\bf Claim.} For a pair $r$ and $a$ of elements in $\hG$, let $r^a$ denote $a r a^{-1}$. Then, $y$ can be written as
\[
y = r_1^{a_1} \cdots r_l^{a_l}
\]
which satisfies the following conditions:
\begin{enumerate}[(1)]
\item Each $r_j$ is a product of $m_j$ $(s(Q), s(Q))$-commutators. Here, $m_j \le M(s)$.

\item $m_1 + \cdots + m_l = n$

\item $a_j \in \hG(s)$. 
\end{enumerate}

We prove the claim by induction on $n$. If $n \le M(s)$, then it suffices to set $r_1 = y$ and $a_1 = e_\hG$. Suppose $n > M(s)$. Since
\[W(s) = [\hG(s), \hG(s)] / ([\hG,\bG] \cap [\hG(s), \hG(s)])\]
is a finite group, Lemma \ref{lem 8.12} implies that there exists a pair $(i,j)$ of elements in $\{ 1, \cdots, n\}$ such that $i \le j$ and $r_1 = w_i \cdots w_j \in [\hG,\bG] \cap [\hG(s), \hG(s)]$. Then we have
\[
y = w_1 \cdots w_n = r_1^{w_1 \cdots w_{i-1}} \cdot (w_1 \cdots w_{i-1} w_{j+1} \cdots w_n).
\]
By applying the inductive hypothesis to $w_1, \cdots, w_{i-1} w_{j+1} \cdots w_n$, we have completed the proof of Claim.
Then, we have that $\cl_{\hG,\bG}(r_j) \le C(s) \cdot m_j$.
Indeed, if we set
\[r_j=[\alpha_1,\beta_1][\alpha_2,\beta_2]\cdots[\alpha_{m_j},\beta_{m_j}]\,\, (\alpha_1,\alpha_2,\ldots,\alpha_{m_j},\beta_1,\beta_2,\ldots,\beta_{m_j}\in s(Q)),\]
then we have
\begin{align*}
\cl_{\hG,\bG}(r_j) &\leq  \cl_{\hG,\bG}\left([\alpha_1,\beta_1][\alpha_2,\beta_2]\cdots[\alpha_{m_j},\beta_{m_j}]\right)\\
& \leq\frac{\cl_{\hG,\bG}\left([\alpha_1,\beta_1][\alpha_2,\beta_2]\cdots[\alpha_{m_j},\beta_{m_j}]\right)}{m_j}\cdot m_j \\
& \leq C(s)\cdot m_j.
\end{align*}
Hence by the conjugacy invariance of $\cl_{\hG,\bG}$,
\[\cl_{\hG,\bG}(y) \le \sum_{j=1}^l C(s) \cdot m_j = C(s) \cdot n.\]
This completes the proof of Theorem \ref{thm 8.11}.
\end{proof}
In what follows, we discuss two applications of our theorems.
As the first application, let us consider the equivalence problem of $\cl_{\hG}$ and $\cl_{\hG,\bG}$ on braid groups.
Let $B_n$ and $P_n$ denote the braid group and the pure braid group on $n$ strands, respectively.
Note that there exists an exact sequence
\[ 1 \to P_n \to B_n \to \mathfrak{S}_n \to 1, \]
where $\mathfrak{S}_n$ denotes the symmetric group of degree $n$.
It is known that the index homomorphism $B_n \to \ZZ$ defined by $\sigma_i \mapsto 1$, where $\sigma_i$ is the $i$th Artin generator, is the abelianization map of $B_n$.
Hence, we have an exact sequence
\[ 1 \to [B_n,B_n] \to B_n \to \ZZ \to 1. \]

\begin{eg}\label{pure braid}
Since $P_n$ is a normal subgroup of finite index of $B_n$, Theorem \ref{thm cl equivalence 2} implies that $\cl_{B_n}$ and $\cl_{B_n,P_n}$ are equivalent on $[B_n,P_n]$.
\end{eg}

\begin{eg}\label{commutator index sum}
If $\hG/\bG$ is a free group, then the quotient map $G \to \hG / \bG $ has a section homomorphism. Hence, Theorem \ref{thm cl equivalence 1} implies that $\cl_{\hG,\bG}$ and $\cl_\hG$ are equivalent on $[\hG,\bG]$.
For example, we have that $\cl_{B_n}$ and $\cl_{B_n,[B_n, B_n]}$ are equivalent on $[B_n,[B_n,B_n]]$
since $B_n / [B_n, B_n] \cong \ZZ$ is a free group.
Note that Kawasaki and Kimura \cite{KK} proved
$\scl_{B_n}$ and $\scl_{B_n, [B_n, B_n]}$ are equivalent on $[B_n,[B_n,B_n]]$, provided that $n \ge 5$.
\end{eg}

We also have the followng exact sequence:
\[ 1 \to P_n \cap [B_n, B_n] \to B_n \to \mathfrak{S}_n \times \ZZ \to 1. \]
Although this example does not satisfy the assumption of Theorem \ref{thm cl equivalence 1} or Theorem \ref{thm cl equivalence 2}, we in fact have the following result.
\begin{prop}\label{pure braid and commutator}
For every positive integer $n$, $\cl_{B_n}$ and $\cl_{B_n,P_n\cap[B_n,B_n]}$ are equivalent on $[B_n,P_n\cap[B_n,B_n]]$.
\end{prop}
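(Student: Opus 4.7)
The plan is to deduce the proposition from Theorem \ref{thm cl equivalence 3}, the virtually-splitting generalization of Theorem \ref{thm cl equivalence 2} promised after Corollary \ref{cor 8.3}. Setting $N := P_n \cap [B_n, B_n]$, the task reduces to checking that the exact sequence $1 \to N \to B_n \xrightarrow{q} B_n/N \to 1$ virtually splits in the sense of Definition \ref{defn=vs}.

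First I would describe $B_n/N$ concretely. The projections $B_n \to B_n/P_n \cong \mathfrak{S}_n$ and $B_n \to B_n/[B_n,B_n] \cong \ZZ$ (the exponent-sum map $e$) combine into a homomorphism $\Phi \colon B_n \to \mathfrak{S}_n \times \ZZ$ with kernel $P_n \cap [B_n,B_n] = N$, inducing an injection $B_n/N \hookrightarrow \mathfrak{S}_n \times \ZZ$. In particular $B_n/N$ is virtually infinite cyclic; moreover $(B_n/N)/(P_n/N) \cong B_n/P_n$, so $P_n/N$ has index $n!$ in $B_n/N$, and under $\Phi$ the subgroup $P_n/N$ corresponds to $\{e\} \times 2\ZZ$ (any pure braid has even exponent sum, and $\sigma_1^2$ realizes the value $2$).

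Next I would exhibit a virtual section. Since $\sigma_1^2 \in P_n$ has exponent sum $2$, we have $\sigma_1^{2k} \in N$ iff $k = 0$, so $q$ restricts to an injection on $\langle \sigma_1^2 \rangle \cong \ZZ$, and its image coincides with $\{e\} \times 2\ZZ$, i.e.\ with $P_n/N$. Hence $\Lambda := q(\langle \sigma_1^2 \rangle) = P_n/N$ has finite index $n!$ in $B_n/N$, and the homomorphism $s \colon \Lambda \to B_n$ defined by $q(\sigma_1^{2k}) \mapsto \sigma_1^{2k}$ satisfies $q \circ s = \mathrm{id}_\Lambda$, so $(s,\Lambda)$ is a virtual section. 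Applying Theorem \ref{thm cl equivalence 3} to this virtual splitting then yields the claimed equivalence of $\cl_{B_n}$ and $\cl_{B_n,N}$ on $[B_n,N]$. The bulk of the work of the proposition really sits in Theorem \ref{thm cl equivalence 3}: since $B_n/N$ is infinite, Theorem \ref{thm cl equivalence 2} alone does not apply, and the only braid-theoretic inputs are the explicit description of $B_n/N$ above and the convenient choice of $\sigma_1^2$ as the image of the virtual section.
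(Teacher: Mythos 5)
Your plan mis\-reads the hypothesis of Theorem~\ref{thm cl equivalence 3}. That theorem is \emph{not} the statement ``if $1\to \bG\to \hG\to \hG/\bG\to 1$ virtually splits then $\cl_\hG$ and $\cl_{\hG,\bG}$ are equivalent''; it requires a virtual section $(s,\Lambda)$ whose image $s(\Lambda)$ lies in the center $Z(\hG)$. This extra centrality requirement is essential to the proof of Theorem~\ref{thm cl equivalence 3}, which hinges on Lemma~\ref{lem cl equivalence 3} (the computation $[\alpha,\beta]=[\alpha,b]$ there uses $t(\beta)\in Z(\hG)$). Indeed, the paper explicitly does \emph{not} prove a ``virtually split $\Rightarrow$ $\cl$-equivalence'' result, and remarks that the authors do not even know whether such equivalence holds for $\cl_{B_n}$ and $\cl_{B_n,[P_n,P_n]}$.

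Your choice of virtual section, sending $q(\sigma_1^{2k})\mapsto \sigma_1^{2k}$, does satisfy $q\circ s=\mathrm{id}_\Lambda$ and has $\Lambda=P_n/N$ of finite index $n!$ in $B_n/N$; all of that is fine. But $\sigma_1^2$ is not central in $B_n$ for $n\geq 3$ (it does not commute with $\sigma_2$, for instance), so $s(\Lambda)\not\subset Z(B_n)$ and Theorem~\ref{thm cl equivalence 3} does not apply. The paper's own proof replaces $\sigma_1^2$ by the full twist $\Delta^2$, which generates $Z(B_n)$: since $\Delta^2$ is a pure braid of exponent sum $n(n-1)\neq 0$, the map $q$ is injective on $\langle\Delta^2\rangle$, so $\Lambda:=q(Z(B_n))$ is infinite cyclic and of finite index in $B_n/N$, and $s(k)=\Delta^{2k}$ gives a virtual section with $s(\Lambda)=Z(B_n)\subset Z(B_n)$. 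That is the fix: keep your structural description of $B_n/N$, but build the virtual section from $\Delta^2$ rather than from $\sigma_1^2$.
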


To prove Proposition \ref{pure braid and commutator}, we provide the following theorem:


\begin{thm}\label{thm cl equivalence 3}
Let $\bG$ be a normal subgroup of a group $\hG$ and $\ppi \colon \hG \to \hG/\bG$ be the quotient map.
Assume that there exists a virtual section $(s,\Lambda)$ of $\ppi$ such that the image of $s$ is contained in the center $Z(\hG)$ for $\hG$.
Then, $\cl_{\hG,\bG}$ and $\cl_\hG$ are equivalent on $[\hG,\bG]$.
\end{thm}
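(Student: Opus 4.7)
The strategy is to interpose a normal subgroup $M$ between $\bG$ and $\hG$ that is compatible with the central section $s$, apply Theorem~\ref{thm cl equivalence 2} to the pair $(\hG, M)$, and then exploit the centrality of $s(\Lambda)$ to rewrite $(\hG, M)$-commutators as $(\hG, \bG)$-commutators of the same length.

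First, replace $\Lambda$ by its normal core in $\hG/\bG$; this subgroup still has finite index, is normal, and the restriction of $s$ to it remains a homomorphism into $Z(\hG)$. Thus, without loss of generality, we may assume that $\Lambda$ itself is normal in $\hG/\bG$, and set $M := \ppi^{-1}(\Lambda)$. Then $M$ is a normal subgroup of $\hG$ containing $\bG$, the quotient $\hG/M \cong (\hG/\bG)/\Lambda$ is finite, and every $m \in M$ factors as $m = s(\lambda) \cdot n$ with $\lambda = \ppi(m) \in \Lambda$ and $n = s(\lambda)^{-1} m \in \bG$.

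Since $\hG/M$ is finite, Theorem~\ref{thm cl equivalence 2} applied to $(\hG, M)$ yields a constant $C > 0$ such that $\cl_{\hG, M}(y) \le C \cdot \cl_\hG(y)$ for every $y \in [\hG, M]$. Because $\bG \subset M$, every $(\hG, \bG)$-commutator is a $(\hG, M)$-commutator, so $[\hG, \bG] \subset [\hG, M]$, and the estimate above is available for every $x \in [\hG, \bG]$.

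It remains to establish $\cl_{\hG, \bG}(x) \le \cl_{\hG, M}(x)$ for $x \in [\hG, \bG]$. Given a decomposition $x = \prod_{i=1}^k [g_i, m_i]$ with $g_i \in \hG$ and $m_i \in M$, write $m_i = s(\lambda_i) n_i$ as above. From the identity $[g, ab] = [g, a] \cdot a[g, b] a^{-1}$ together with the fact that $s(\lambda_i) \in Z(\hG)$, we get $[g_i, s(\lambda_i)] = e_\hG$ and $s(\lambda_i) [g_i, n_i] s(\lambda_i)^{-1} = [g_i, n_i]$, so $[g_i, m_i] = [g_i, n_i]$, a $(\hG, \bG)$-commutator. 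Hence $\cl_{\hG, \bG}(x) \le k$, and combined with the trivial inequality $\cl_\hG(x) \le \cl_{\hG, \bG}(x)$, this proves the equivalence with constant $C$. The only delicate point is ensuring the hypothesis of Theorem~\ref{thm cl equivalence 2} is met, which is handled by the normal-core reduction; the rest is an elementary commutator computation driven by the centrality of $s(\Lambda)$.
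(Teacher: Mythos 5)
Your proof is correct and follows essentially the same route as the paper: pass to the normal core of $\Lambda$, set $M=\ppi^{-1}(\Lambda)$, show via centrality of $s(\Lambda)$ that every single $(\hG,M)$-commutator equals a single $(\hG,\bG)$-commutator, and then invoke Theorem~\ref{thm cl equivalence 2} for the pair $(\hG,M)$. The paper packages the commutator reduction as Lemma~\ref{lem cl equivalence 3}, while you carry it out inline, but it is the same computation.
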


Note that Theorem \ref{thm cl equivalence 3} is a generalization of Theorem \ref{thm=cl} (2) since the trivial subgroup is in the center.
In order to prove Theorem \ref{thm cl equivalence 3}, we employ the following lemma.

\begin{lem}\label{lem cl equivalence 3}
Let $\Lambda$ be a normal subgroup of $\hG/\bG$.
If the restriction of the quotient map $\ppi\colon \hG \to \hG / \bG $ to $\ppi^{-1}(\Lambda)$ has a section homomorphism whose image is contained in the center for $\hG$,
then $[\hG,\bG]=[\hG,\ppi^{-1}(\Lambda)]$ and $\cl_{\hG,\ppi^{-1}(\Lambda)}(x)=\cl_{\hG,\bG}(x)$ for any $x\in[\hG,\bG]$.
\end{lem}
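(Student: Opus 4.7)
My plan is to set $M := \ppi^{-1}(\Lambda)$ and exploit the centrality of $s(\Lambda)$ to absorb the "$\Lambda$-part" of any element of $M$ into a trivial contribution to any commutator with $\hG$. Since $\bG \subseteq M$, every $(\hG,\bG)$-commutator is automatically a $(\hG,M)$-commutator, so the nontrivial direction is to show that, conversely, every $(\hG,M)$-commutator can be rewritten as a $(\hG,\bG)$-commutator.

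First I would observe that the section $s \colon \Lambda \to M$ together with the normal subgroup $\bG$ gives a decomposition: for every $m \in M$, setting $\lambda = \ppi(m) \in \Lambda$, the element $h := m\, s(\lambda)^{-1}$ lies in $\bG$ (since $\ppi(h) = \lambda \lambda^{-1} = e_Q$), and thus $m = h \cdot s(\lambda)$ with $h \in \bG$ and $s(\lambda) \in Z(\hG)$. Next I would apply the standard commutator identity $[g,ab] = [g,a]\cdot a[g,b]a^{-1}$ to $m = h \cdot s(\lambda)$ for any $g \in \hG$: since $s(\lambda)$ is central in $\hG$, we have $[g,s(\lambda)] = e_\hG$, and therefore
\[
[g,m] \;=\; [g,h]\cdot h[g,s(\lambda)]h^{-1} \;=\; [g,h],
\]
which is a $(\hG,\bG)$-commutator. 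This shows simultaneously that $[\hG,M] \subseteq [\hG,\bG]$ (the reverse inclusion $[\hG,\bG] \subseteq [\hG,M]$ is trivial from $\bG \subseteq M$), hence $[\hG,\bG] = [\hG,M]$.

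Finally, for the equality of commutator lengths, I would argue both inequalities on $[\hG,\bG] = [\hG,M]$ directly from the same identification. The inequality $\cl_{\hG,M}(x) \le \cl_{\hG,\bG}(x)$ is immediate since any expression of $x$ as a product of $n$ $(\hG,\bG)$-commutators is in particular an expression of $x$ as a product of $n$ $(\hG,M)$-commutators. For the opposite inequality, any factorization $x = [g_1,m_1]\cdots[g_n,m_n]$ with $m_i \in M$ can, by the identity $[g_i,m_i] = [g_i,h_i]$ established above (with $h_i = m_i s(\ppi(m_i))^{-1} \in \bG$), be rewritten as $x = [g_1,h_1]\cdots[g_n,h_n]$, a product of $n$ $(\hG,\bG)$-commutators.

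The only mildly subtle point is verifying the commutator identity and correctly invoking centrality; everything else is bookkeeping. I do not expect any genuine obstacle.
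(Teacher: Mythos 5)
Your proof is correct and takes essentially the same approach as the paper: both decompose an element of $\ppi^{-1}(\Lambda)$ as a product of an element of $\bG$ and an element of the (central) image of $s$, then observe that the central factor drops out of the commutator. The only cosmetic differences are that you factor the central part on the right while the paper puts it on the left, and you invoke the standard commutator identity $[g,ab]=[g,a]\cdot a[g,b]a^{-1}$ where the paper expands the commutator directly; you also spell out the two-inequality argument for $\cl_{\hG,\bG}=\cl_{\hG,\ppi^{-1}(\Lambda)}$, which the paper leaves implicit.
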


\begin{proof}
Let $s$ be a section homomorphism of $\ppi|_{\ppi^{-1}(\Lambda)}$ and set $t = s \circ \ppi \colon \ppi^{-1}(\Lambda) \to s(\Lambda)$. Take $\alpha \in \hG$ and $\beta \in \ppi^{-1}(\Lambda)$, and set $b = t(\beta)^{-1}\beta \in \bG$.
Then, since $t(\beta) \in s(\Lambda) \subset Z(\hG)$, we have
\[[\alpha,\beta]=\alpha t(\beta) b \alpha^{-1} b^{-1} t(\beta)^{-1} = \alpha b \alpha^{-1}b^{-1} =[\alpha,b]. \]
Thus, $[\hG,\bG]=[\hG,\ppi^{-1}(\Lambda)]$ and $\cl_{\hG,\bG}=\cl_{\hG,\ppi^{-1}(\Lambda)}$.
\end{proof}

\begin{proof}[Proof of Theorem~$\ref{thm cl equivalence 3}$]
Take the normal core $\Lambda_0$ of $\Lambda$ in $\hG/\bG$, which is also a subgroup of $\hG/\bG$ of finite index; replace the virtual section $(s,\Lambda)$ with $(s|_{\Lambda_0},\Lambda_0)$. In this manner, we may assume
that $\Lambda$ is a \emph{normal} subgroup of $\hG/\bG$ of finite index.
By Lemma \ref{lem cl equivalence 3}, $\cl_{\hG,\bG}=\cl_{\hG,\ppi^{-1}(\Lambda)}$.
Since $G/{\ppi}^{-1}(\Lambda)$ is a finite group, by Theorem \ref{thm cl equivalence 2},
 $\cl_{\hG,\ppi^{-1}(\Lambda)}$ and $\cl_G$ are equivalent on $[\hG,\bG]$.
\end{proof}

\begin{proof}[Proof of Proposition~$\ref{pure braid and commutator}$]
Set $\hG=B_n$ and $\bG=P_n \cap [B_n,B_n]$. Recall that $\hG/\bG \cong \ZZ \times \mathfrak{S}_n$.
It is known that $Z(B_n) \cong \langle \Delta^2 \rangle$, where $\Delta$ is the half twist, more precisely,
\[\Delta=(\sigma_1\sigma_2\cdots\sigma_{n-1} )(\sigma_1\sigma_2\cdots\sigma_{n-2})\cdots (\sigma_1\sigma_2) \sigma_1.\]
Set
\[\Lambda = \ppi(Z(B_n)) \cong \big( n(n-1) \ZZ \big) \times \{ 0 \} \subset \ZZ \times \mathfrak{S}_n.\]
Note that $\Lambda$ is a normal subgroup of $\hG/\bG$ of finite index.
Define a homomorphism $s \colon \Lambda \cong \ZZ \to \ppi^{-1}(\Lambda)$ by $s(k) = \Delta^{2k}$.
Then $(s,\Lambda)$ is a virtual section of $\ppi$ and $s(\Lambda) \subset Z(G)$.
Therefore, by Theorem \ref{thm cl equivalence 3}, $\cl_{\hG}$ and $\cl_{\hG,\bG}$ are equivalent on $[\hG,\bG]$.
\end{proof}


The authors do not know whether $\cl_{B_n}$ and $\cl_{B_n,[P_n,P_n]}$ are equivalent on $[B_n,[P_n,P_n]]$.

As the second application, we exhibit examples related to free products. Let $A$, $B$ be two groups (possibly non-commutative or infinite). Let $\ppi\colon A\ast B\to A\times B$ be the canonical surjection. Let $\hG=A\ast B$ and $\bG=\mathrm{Ker}(\ppi)$. Then, it is well known that $\bG$ is a free group with basis $\{[a,b]\; | \; a\in A- \{e_A\},\ b\in B- \{e_B\}\}$.
In the setting above, we have the following.

\begin{prop}\label{prop=freeindex}
Let $A^{\mathrm{ab}}$ and $B^{\mathrm{ab}}$ be the abelianizations of $A$ and $B$, respectively. Regard them as $\mathbb{Z}$-modules. Then we have
\[
\bG/[\hG,\bG]\cong A^{\mathrm{ab}}\otimes_{\mathbb{Z}} B^{\mathrm{ab}}
\]
as $\mathbb{Z}$-modules.
\end{prop}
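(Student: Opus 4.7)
The plan is to construct mutually inverse homomorphisms
\[
\phi \colon A^{\mathrm{ab}} \otimes_{\mathbb{Z}} B^{\mathrm{ab}} \longrightarrow N/[G,N], \qquad \psi \colon N/[G,N] \longrightarrow A^{\mathrm{ab}} \otimes_{\mathbb{Z}} B^{\mathrm{ab}},
\]
given on generators by $\bar a \otimes \bar b \longleftrightarrow [a,b] \cdot [G,N]$.

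To build $\phi$ via the universal property of the tensor product, I first check that the map $A \times B \to N/[G,N]$, $(a,b) \mapsto [a,b] \cdot [G,N]$, is $\mathbb{Z}$-bilinear. This follows from the standard commutator identity
\[
[a_1 a_2, b] = a_1 [a_2, b] a_1^{-1} \cdot [a_1, b]
\]
combined with the triviality of the $G$-conjugation action on $N/[G,N]$, yielding $[a_1 a_2, b] \equiv [a_1, b] + [a_2, b] \pmod{[G,N]}$; the symmetric identity handles additivity in the $b$-variable, and $[a, e_B] = [e_A, b] = e_G$ handles the nullity clauses. Since the resulting bilinear map descends to $A^{\mathrm{ab}} \times B^{\mathrm{ab}}$, it factors through $A^{\mathrm{ab}} \otimes_{\mathbb{Z}} B^{\mathrm{ab}}$ and defines $\phi$.

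For $\psi$, I would exploit the stated freeness of $N$ with basis $\{[a,b] : a \in A \setminus \{e_A\},\ b \in B \setminus \{e_B\}\}$ to define a homomorphism $\tilde \psi \colon N \to A^{\mathrm{ab}} \otimes_{\mathbb{Z}} B^{\mathrm{ab}}$ by $[a,b] \mapsto \bar a \otimes \bar b$; since the target is abelian, $\tilde \psi$ factors through $N^{\mathrm{ab}}$. The main obstacle — and the only nontrivial step — is to show that $\tilde \psi$ annihilates $[G,N]$, equivalently that $\tilde \psi(g [a,b] g^{-1}) = \tilde \psi([a,b])$ for each basis commutator $[a,b]$ and each $g$ in the generating set $A \cup B$ of $G$. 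For $g = a' \in A$, rewriting $a'[a,b]a'^{-1} = [a'a, b] \cdot [a', b]^{-1}$ inside the free group $N$ and applying $\tilde \psi$ reduces the claim to the identity $\overline{a'a} \otimes \bar b - \overline{a'} \otimes \bar b = \bar a \otimes \bar b$ in $A^{\mathrm{ab}} \otimes_{\mathbb{Z}} B^{\mathrm{ab}}$, which is immediate; the case $g = b' \in B$ is dual, using $b'[a,b]b'^{-1} = [a,b']^{-1} \cdot [a, b'b]$. A minor bookkeeping point is that when $a'a = e_A$ (or $a' = e_A$), the corresponding commutator is trivial in $N$ rather than a basis element, but the computation still closes because $\overline{e_A} \otimes \bar b = 0$. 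Once both maps are in place, $\phi \circ \psi$ and $\psi \circ \phi$ agree with the identity on the evident generating sets, finishing the proof.
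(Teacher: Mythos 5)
Your proof is correct and rests on the same core computations as the paper's: the commutator identities $a'[a,b]a'^{-1}=[a'a,b][a',b]^{-1}$ and $b'[a,b]b'^{-1}=[a,b']^{-1}[a,b'b]$, together with the freeness of $N$ on $\{[a,b]\}$. The paper packages this as a recognition of the presentation of $N/[G,N]$ (generators $\overline{s}_{a,b}$ modulo the bilinearity relations, which is exactly the standard presentation of $A^{\mathrm{ab}}\otimes_{\mathbb{Z}} B^{\mathrm{ab}}$), whereas you construct the two mutually inverse homomorphisms explicitly; this is a bookkeeping difference, not a different route, though your version does make the role of freeness in defining $\psi$ and the reduction to conjugation by generators of $G$ somewhat more visible.
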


\begin{prop}\label{prop=freescl}
Assume that $A$ has an involutive element, that means, there exists $z\in A- \{e_A\}$ with $z^2=e_A$. Assume also that $\#B\geq 2$ and $(\# A,\#B)\ne (2,2)$. Then, on $[\bG,\bG]$, $\mathrm{scl}_{\hG,\bG}$ is \emph{not} equivalent to $\mathrm{scl}_{\bG}$.
\end{prop}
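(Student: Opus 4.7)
The plan is to exhibit one element $w \in [\bG,\bG]$ realising $\scl_{\bG}(w)>0$ and $\scl_{\hG,\bG}(w)=0$; a single such $w$ already obstructs any bi-Lipschitz comparison between these two functions. First I would pin down an involution $y \in \hG \setminus \bG$ together with two distinct members $u_1,u_2$ of the standard free basis $\{[a,b] : a \in A \setminus \{e_A\},\ b \in B \setminus \{e_B\}\}$ of $\bG$ such that $y u_i y^{-1} = u_i^{-1}$ for $i=1,2$. This dichotomises according to whether $\#B \geq 3$ or $\#B=2$. In the first case, I take $y:=z \in A$ and $u_i:=[z,b_i]$ for distinct $b_1,b_2 \in B \setminus \{e_B\}$. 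In the second case, the nontrivial element $b$ of $B=\ZZ/2\ZZ$ is itself an involution, and the exclusion $(\#A,\#B)\neq (2,2)$ combined with the hypothesis that $A$ has an involution forces $\#A \geq 3$, so I take $y:=b$ and $u_i:=[a_i,b]$ for distinct $a_1,a_2 \in A \setminus \{e_A\}$. In both cases, a direct computation using $y^2=e_\hG$ (so $y^{-1}=y$) yields $y u_i y^{-1} = u_i^{-1}$.

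Armed with $y,u_1,u_2$, I set
\[
w := u_1 u_2 u_1^{-1} u_2^{-1} u_2^{-1} u_1^{-1} u_2 u_1.
\]
This word is cyclically reduced of length $8$ in the free subgroup $\langle u_1,u_2 \rangle \leq \bG$, hence nontrivial in $\bG$, and its abelianization in $\bG$ vanishes, placing $w$ in $[\bG,\bG]$. Writing $\tau_y$ for conjugation by $y$ viewed as an automorphism of $\bG$, the homomorphism property together with $\tau_y(u_i)=u_i^{-1}$ shows that $\tau_y$ inverts every letter of $w$; because $w$ is a palindrome, the resulting word is exactly $w^{-1}$. Therefore $y w y^{-1} = w^{-1}$ in $\bG$.

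Now I invoke Bavard duality. By Lemma~\ref{lem 3.1}, any $f \in \QQQ^{\mathrm{h}}(\bG)^{\hG}$ is literally $\hG$-invariant, so
\[
f(w) = f(y w y^{-1}) = f(w^{-1}) = -f(w),
\]
giving $f(w)=0$; Theorem~\ref{main thm} then yields $\scl_{\hG,\bG}(w)=0$. One may equally argue without duality: from $y w y^{-1}=w^{-1}$ it follows that $w^{2k} = [w^k,y]$ is a single $(\hG,\bG)$-commutator for each $k \geq 1$, so $\cl_{\hG,\bG}(w^{2k}) \leq 1$ and $\scl_{\hG,\bG}(w)=0$. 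On the other hand, $\bG$ is a free group and $w$ is a nontrivial element of its commutator subgroup, so the Duncan--Howie inequality delivers $\scl_{\bG}(w) \geq 1/2$. Comparing the two bounds shows that no constant $C$ satisfies $C^{-1} \scl_{\bG} \leq \scl_{\hG,\bG}$ on $[\bG,\bG]$, as required.

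The main obstacle is engineering the pair $(y,\{u_1,u_2\})$ rather than the subsequent palindrome manipulation. The excluded case $\#A=\#B=2$ is exactly the one where $\bG \cong \ZZ$ has trivial commutator subgroup, leaving nothing to compare; outside this case, the hypothesis always supplies an involution in one of the two free factors together with at least two distinct basis elements of $\bG$ inverted by $\tau_y$, which is the only ingredient needed to launch the construction.
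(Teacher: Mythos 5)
Your proof is correct and mirrors the paper's strategy exactly: exhibit a nontrivial $w\in[\bG,\bG]$ inverted by conjugation by an involution of $\hG$, conclude $\scl_{\hG,\bG}(w)=0$ via Theorem~\ref{main thm} (or the observation $w^{2k}=[w^k,y]$), and invoke nonabelian freeness of $\bG$ for $\scl_\bG(w)>0$. The paper's witness is the single commutator $x=[h,zhz]$ for a carefully chosen $h\in\bG$ with $h$ and $zhz$ noncommuting, always conjugating by $z\in A$ (and when $\#B=2$ using the formula $zs_{a,b}z^{-1}=s_{za,b}s_{z,b}^{-1}$ to produce noncommutation), whereas you switch to the involution in $B$ when $\#B=2$ and use a length-eight palindrome in two basis elements that are each inverted by $\tau_y$ --- a slightly more uniform packaging of the same idea, and equally correct.
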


Before proceeding to the proofs, we make the following two remarks. First, if $\#A<\infty$ and if $\#B<\infty$, then $[\hG:\bG]=(\#A)\cdot (\#B)<\infty$. Note that $\bG/([\hG,\hG]\cap \bG)$ always equals $0$. Hence, under the assumptions above, Proposition~\ref{prop=freeindex} implies that
\[
\#(([\hG,\hG]\cap \bG)/[\hG,\bG])=\#(A^{\mathrm{ab}}\otimes_{\mathbb{Z}} B^{\mathrm{ab}}).
\]
This number can be much larger than $[\hG:\bG]=(\#A)\cdot (\#B)$. In particular, in Proposition~\ref{prop 8.5},  we can\emph{not} bound $\#([\hG,\hG]/[\hG,\bG])$ from above by any polynomial function in $[\hG:\bG]$.

Secondly, if either $A$ or $B$ is finite, then $1\to \bG \to \hG \to Q=A\times B\to 1$ virtually splits. Hence, Theorem~\ref{scl eq} applies to this case. Nevertheless, Proposition~\ref{prop=freescl} implies that in several cases, the restriction of $\mathrm{scl}_{\hG,\bG}$ on $[\bG,\bG]$ is not equivalent to $\mathrm{scl}_{\bG}$. We remark that the Cauchy theorem and the Feit--Thompson theorem imply the following: if a finite group does not admit an involutive element, then it must be solvable.

In the proofs of Propositions~\ref{prop=freeindex} and \ref{prop=freescl}, we use the symbol $s_{a,b}:=[a,b]\in \bG$ for $a\in A$ and $b\in B$.

\begin{proof}[Proof of Proposition~$\ref{prop=freeindex}$]
For $a\in A$ and $b\in B$, let $\overline{s}_{a,b}$ be the image of $s_{a,b}$ in $\bG/[\hG,\bG]$. Since, $[\hG,\bG]\supseteq [\bG,\bG]$, we consider $\bG/[\hG,\bG]$ as an additive group. Let $c\in A$ and $d\in B$. Then direct computation shows that
\[
c \cdot s_{a,b}\cdot c^{-1}=s_{c a,b}\cdot s_{c,b}^{-1}\quad \textrm{and}\quad d \cdot s_{a,b} \cdot d^{-1}=s_{a,d}^{-1}\cdot s_{a,d b}.
\]
Therefore, $\bG/[\hG,\bG]$ is the additive group generated by $\{\overline{s}_{a,b}\; | \; a\in A,\ b\in B\}$ subject to the relations
\[
\overline{s}_{c a,b}=\overline{s}_{c,b}+\overline{s}_{a,b} \quad \textrm{and}\quad
\overline{s}_{a,d b}=\overline{s}_{a,d}+\overline{s}_{a,b}
\]
for $c\in A$ and $d \in B$. These relations imply that $\overline{s}_{a,b}$ only depends on the image of $(a,b)$ under the natural projection $A\times B \twoheadrightarrow A^{\mathrm{ab}}\oplus B^{\mathrm{ab}}$. Hence, for $\alpha\in  A^{\mathrm{ab}}$ and $\beta\in B^{\mathrm{ab}}$, the element $\overline{s}(\alpha,\beta):=\overline{s}_{a,b}\in \bG/[\hG,\bG]$ is well-defined. Here, $(a,b)$ is a lift of $(\alpha,\beta)$ for the map $A\times B \twoheadrightarrow A^{\mathrm{ab}}\oplus B^{\mathrm{ab}}$.

To summerize, $\bG/[\hG,\bG]$ is the $\mathbb{Z}$-module generated by $\{\overline{s}(\alpha,\beta)\; | \; \alpha\in A^{\mathrm{ab}},\ \beta\in B^{\mathrm{ab}}\}$ subject to the relations
\[
\overline{s}(\alpha_1+\alpha_2,\beta)=\overline{s}(\alpha_1,\beta)+\overline{s}(\alpha_2,b) \quad \textrm{and}\quad
\overline{s}(\alpha,\beta_1+\beta_2)=\overline{s}(\alpha,\beta_1)+\overline{s}(\alpha,\beta_2)
\]
for $\alpha,\alpha_1,\alpha_2\in A^{\mathrm{ab}}$ and $\beta,\beta_1,\beta_2 \in B^{\mathrm{ab}}$. This exactly means that $\bG/[\hG,\bG]\cong A^{\mathrm{ab}}\otimes_{\mathbb{Z}} B^{\mathrm{ab}}$, as desired.
\end{proof}

\begin{proof}[Proof of Proposition~$\ref{prop=freescl}$]
Let $z\in A$ be an involutive element; hence $z^{-1}=z$. We claim that there exists $h\in \bG$ such that $h$ and $zhz=zhz^{-1}$ do not commute. Indeed, if $\#B\geq 3$, then take two distinct elements $b_1,b_2\in B- \{e_B\}$ and set $h=s_{z,b_1}\cdot s_{z,b_2}$. Otherwise,  $\#A> 2$ and $B\ne \{e_B\}$ by assumption; in this case, take $a\in A- \{e_A,z\}$ and $b\in B- \{e_B\}$, and set $h=s_{a,b}$. Let
\[
x=h(zhz)h^{-1}(zh^{-1}z) \quad \in [\bG,\bG].
\]
Note that $zxz=x^{-1}$. This implies that $\mathrm{scl}_{\hG,\bG}(x)=0$ by Theorem~\ref{main thm}. Indeed, note that $f(x)=f(zxz)=-f(x)$ for every $f\in \QQQ^{\mathrm{h}}(\bG)^{\hG}$. (Or, observe that $x^{2n}=(zx^{-n}z)x^n=[z,x^{-n}]$ for every $n\in \ZZ$.) On the other hand, $x$ is a non-trivial element of the commutator subgroup of the free group $\bG$. Since the rank of $\bG$ is at least $2$ by assumption, we conclude that $\mathrm{scl}_{\bG}(x)>0$. Indeed, consider an appropriate counting quasimorphism on $\bG$ constructed by Brooks \cite{Brooks}, and apply Theorem~\ref{thm Bavard}. It completes our proof.
\end{proof}

\bibliographystyle{amsalpha}
\bibliography{Bavard_9.bib}

\end{document}